\documentclass[12pt,oneside,a4paper]{article}
\usepackage{amsmath,amsthm, amssymb}
\usepackage{hyperref}
\usepackage{cleveref}

\usepackage[left=3.0cm,right=3.0cm,top=2.0cm,bottom=2.0cm]{geometry}
\usepackage[utf8]{inputenc}
\usepackage{hyperref}
\usepackage{esint}
\usepackage{stmaryrd}
\usepackage{microtype}

\newtheoremstyle{mytheorem}
  {3pt}
  {3pt}
  {\itshape}
  {}
  {\bfseries}
  {.}
  {1em}
  {}

\newtheorem{theoremA}{Theorem}

\newtheorem{theoremB}{Theorem}

\newtheorem{theoremC}{Theorem}

\newtheorem{definition}{Definition}[section]
\newtheorem{proposition}[definition]{Proposition}

\theoremstyle{mytheorem}
\newtheorem{theorem}[definition]{Theorem}

\newtheorem{lemma}[definition]{Lemma}

\newtheorem{corollary}[definition]{Corollary}

\theoremstyle{remark}
\newtheorem{remark}[definition]{Remark}

\begin{document}
\title{Cohn--Vossen-Type Inequalities for Three-Manifolds and Locally Conformally Flat Manifolds}
\author{Jialong Deng \thanks{Jialongdeng@gmail.com}}
\date{}

\maketitle

\begin{abstract}
We prove Cohn--Vossen-type scalar curvature inequalities on complete
noncompact Riemannian manifolds with nonnegative Ricci curvature, motivated by
Yau's higher-dimensional problem. For \(n\ge3\), we obtain an
\(O(r^{n-2})\) normalized growth estimate under a \(\mathbb Z^{n-2}\) subgroup
condition on \(\pi_1(M)\). For locally conformally flat manifolds, we prove
the corresponding normalized estimate in the non-\(\mathbb R^n\) case and
derive polynomial or exponential upper bounds in the conformally Euclidean
case.

In dimension three, we prove the sharp asymptotic scalar-curvature flux
estimate \(8\pi\) under quadratic scalar-curvature decay,
confirming  the Munteanu--Wang conjectural \(8\pi\) bound in this
setting; we also prove finiteness of the flux for manifolds with a foliated
end. Finally, under the Cohn--Vossen-scale scalar-growth hypothesis, we prove
weighted analogues for the weighted scalar curvature \(\mathrm{Sc}_{\alpha,\beta}\) on weighted Riemannian manifolds with nonnegative Bakry--\'Emery Ricci curvature, including
sharp distinctions between the finite-dimensional and infinite-dimensional
Bakry--\'Emery regimes.
\end{abstract}

\tableofcontents
\section{Introduction}

Motivated by the problem of extending the Cohn--Vossen
inequality~\cite{zbMATH02533557} from two dimensions to higher
dimensions, Yau poses the following question in his 1990 problem
list~\cite[Problem~9]{MR1216573}:
\begin{quote}
Given an $n$-dimensional complete manifold with nonnegative Ricci
curvature, let $B(r)$ be the geodesic ball around some point $p$.  Let
$\sigma_k$ be the $k$-th symmetric function of the Ricci tensor.  Then
is it true that $r^{-n+2k}\int_{B(r)}\sigma_k$ has an upper bound when
$r$ tends to infinity?
\end{quote}
For \(k\ge 2\) and \(n\ge 3\), Yang's thesis provides a negative answer to Yau's problem~\cite{MR3010146}.  For $k=1$, one has $\sigma_1=\mathrm{Sc}_g$.

\begin{theoremA}[Cohn--Vossen-type estimates]\label{F}
Let \((M^n,g)\) be a complete, connected, oriented, nonflat Riemannian
\(n\)-manifold, \(n\ge3\), with nonnegative Ricci curvature.  Fix
\(p\in M\), and let \(B_g(p,r)\) denote the geodesic ball of radius \(r\)
centered at \(p\).

\begin{enumerate}
\item[(i)] If \(\pi_1(M)\) contains a subgroup isomorphic to
\(\mathbb Z^{n-2}\), then, for every \(r>0\),
\[
        r^{2-n}\int_{B_g(p,r)}\mathrm{Sc}_g\,dV_g
        \le 8\pi\,\omega_{n-2},
\]
where
$\omega_{n-2} :=\mathrm{Vol}_{\mathbb E^{n-2}}\bigl(B_{\mathbb E^{n-2}}(0,1)\bigr).$

\item[(ii)] Suppose that \((M,g)\) is locally conformally flat.  If \(M\) is
not homeomorphic to \(\mathbb R^n\), then
\[
        \limsup_{r\to\infty}
        r^{2-n}\int_{B_g(p,r)}\mathrm{Sc}_g\,dV_g
        <\infty .
\]
If \(M\) is homeomorphic to \(\mathbb R^n\), then, after identifying \(M\)
with \(\mathbb R^n\) in the conformally Euclidean parametrization, there
exist \(p_0\in\mathbb R^n\), \(m\in(0,1]\), and \(C>0\), depending on \(g\),
with the following properties.

If \(0<m<1\), then there exists \(A_1>0\) depending on $g$ such that, for every \(r\ge0\),
\[
        \int_{B_g(p_0,r)}\mathrm{Sc}_g\,dV_g
        \le
        \frac{4(n-1)\omega_{n-1}}{1-2^{2-n}}\,
        C^2A_1^{n-2}(1+r)^{\frac{n-2}{1-m}} .
\]
If \(m=1\), then there exist \(A,B>0\) depending on $g$ such that, for every \(r\ge0\),

\[
        \int_{B_g(p_0,r)}\mathrm{Sc}_g\,dV_g
        \le
        \frac{4(n-1)\omega_{n-1}}{1-2^{2-n}}\,
        C^2A^{n-2}e^{B(n-2)r}.
\]

\end{enumerate}
\end{theoremA}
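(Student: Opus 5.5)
For part (i) I would use the splitting structure forced by the fundamental group. By Cheeger--Gromoll and the structure theory for nonnegative Ricci curvature (Milnor, Anderson, Sormani), a subgroup $\mathbb Z^{n-2}\subset\pi_1(M)$ forces the universal cover $\tilde M$ to split isometrically as $\mathbb R^{k}\times N$. Concretely, I would argue through the torsion-free nilpotent and then abelian growth bound: $\pi_1$ of a manifold with $\mathrm{Ric}\ge0$ has polynomial growth of degree at most $n$. The rank-$(n-2)$ abelian subgroup then acts cocompactly on an $\mathbb R^{n-2}$ factor, which yields $\tilde M=\mathbb R^{n-2}\times\Sigma^2$ with $\Sigma$ a complete surface of nonnegative curvature. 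Since $M$ is nonflat, $\Sigma$ is nonflat.

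Then $\mathrm{Sc}_g=2K_\Sigma$, pulled back from the surface. I would estimate $\int_{B_g(p,r)}\mathrm{Sc}_g$ by lifting to a fundamental domain. The $\mathbb Z^{n-2}$ part is absorbed in the Euclidean factor: the projection of the ball to $\mathbb R^{n-2}$, modulo the lattice, has volume at most $\omega_{n-2}r^{n-2}$. The $\Sigma$-projection of the ball lies in the geodesic disc of radius $r$ (modulo deck transformations acting on $\Sigma$). The classical Cohn--Vossen inequality gives $\int_\Sigma K\le 2\pi\chi(\Sigma)\le 2\pi$, or $\le 4\pi$ if $\Sigma=S^2$ in the compact case. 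Multiplying, $\int \mathrm{Sc}\le 2\cdot 4\pi\cdot\omega_{n-2}r^{n-2}=8\pi\omega_{n-2}r^{n-2}$. The delicate points here are, first, that the isometric splitting really holds with an $(n-2)$-dimensional Euclidean factor, and second, the Fubini bound. For the latter I expect to use the coarea/product structure, $B_g(p,r)\subset B_{\mathbb R^{n-2}}(r)\times B_\Sigma(r)$ in the cover, and to handle the quotient by injectivity of the covering restricted to a fundamental domain.

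For part (ii) I would invoke Schoen--Yau and Zhu's classification of complete locally conformally flat manifolds with $\mathrm{Ric}\ge0$. Such a manifold is either flat, or covered conformally by $S^n$ or $\mathbb R^n$, or is a quotient of $\mathbb R\times S^{n-1}$ or $\mathbb R^{n-1}\times S^1$-type products that are flat or products, or its universal cover is $\mathbb R^n$ with $M$ itself diffeomorphic to $\mathbb R^n$. In the non-$\mathbb R^n$ noncompact nonflat cases, $M$ is isometrically covered by $\mathbb R\times S^{n-1}$, by $S^{n-1}\times\mathbb R$ with round factor, or by a quotient of $\mathbb H^{k}$-free product forms. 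In each of these, $\mathrm{Sc}$ is bounded and the volume of balls grows at most linearly, giving $\int_{B(r)}\mathrm{Sc}=O(r)\le O(r^{n-2})$ for $n\ge3$.

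In the $\mathbb R^n$ case write $g=u^{4/(n-2)}|dx|^2$ with $u>0$ superharmonic; more precisely $-\tfrac{4(n-1)}{n-2}\Delta u=\mathrm{Sc}\,u^{(n+2)/(n-2)}\ge0$. Then
\[
\int_{B_g}\mathrm{Sc}\,dV_g=\tfrac{4(n-1)}{n-2}\int -\Delta u\cdot u\,dx,
\]
which after integrating over a Euclidean ball $B(R)$ and using the superharmonic mean value bounds (the factor $1-2^{2-n}$ suggests comparing $B(R)$ with $B(2R)$ through the Green function) is bounded by a constant times $\omega_{n-1}C^2R^{n-2}$, where $u\le C$ on the relevant region. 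Finally I need to relate Euclidean radius $R$ to $g$-radius $r$. Using the Ricci condition (Zhu/Cheng bound $u(x)\ge c|x|^{-m}$ type lower growth, with $m\in(0,1]$), the $g$-distance satisfies $r\gtrsim\int^R s^{-m}ds$. This gives $R\lesssim A_1(1+r)^{1/(1-m)}$ when $m<1$, and $R\le Ae^{Br}$ when $m=1$. Substituting yields the two displayed bounds. The main obstacle is establishing this decay rate of $u$ from $\mathrm{Ric}\ge0$, together with the uniform bound $u\le C$. For these I would use the Harnack inequality for superharmonic $u$ and Schoen--Yau's result that the developing map is injective.
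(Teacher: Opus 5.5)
Your outline matches the paper's architecture in all three cases. The conformally Euclidean case of (ii), however, has a genuine gap at exactly the point you flag, and the tools you propose cannot close it. You reconstruct the Green-function mass bound correctly: $\int_{B_E(p_0,2R)}G_R\,(-\Delta_E u)\,dx\le u(p_0)$, and $G_R\ge\frac{1-2^{2-n}}{(n-2)\omega_{n-1}}R^{2-n}$ on $B_E(p_0,R)$. Beyond that, the argument needs two further inputs:
(a) a \emph{global} bound $u\le C$, because the Euclidean balls $B_E(p_0,R(r))$ exhaust $\mathbb R^n$;
(b) a lower bound $\lambda\ge c(1+|x|)^{-m}$ with $m\in(0,1]$ on the length factor $\lambda=u^{2/(n-2)}$. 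It must be a bound on $\lambda$, not on $u$; with $u\ge c|x|^{-m}$ your exponents come out wrong.

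Harnack or comparison for the positive superharmonic $u$ yields only $u\ge c|x|^{2-n}$, i.e.\ $\lambda\gtrsim|x|^{-2}$. Since $\int^\infty s^{-2}\,ds<\infty$, this gives no containment of $B_g(p_0,r)$ in any Euclidean ball. Superharmonicity also gives no global upper bound, since positive smooth superharmonic functions on $\mathbb R^n$ can be unbounded.

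Both facts genuinely use $\mathrm{Ric}_g\ge0$ and completeness, not just $\mathrm{Sc}_g\ge0$. For example, $u=1+w$, with $w$ the Newtonian potential of a smooth compactly supported nonnegative density, gives a complete nonflat metric with $\mathrm{Sc}_g\ge0$ and $u\to1$. The paper imports both facts from Ma--Qing. For nonflat $g$ one has $\liminf_{|x|\to\infty}\phi(x)/\log(1/|x|)=m>0$ with $\phi=\frac{2}{n-2}\log u$. This forces $u\to0$ at infinity and hence a global maximum $M=u(p_0)$, which is why the balls are centered at $p_0$ and $C=M$. Ma--Qing's pointwise estimate then gives $\lambda\ge c(1+|x|)^{-m}$, whence $d_g(p_0,x)\ge c_0\int_0^{|x-p_0|}(1+t)^{-m}\,dt$ and the two Euclidean-reach regimes follow. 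Schoen--Yau injectivity of the developing map plays no role here, since $M$ is already globally conformal to $\mathbb R^n$.

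In (i), the splitting of an $\mathbb R^{n-2}$ factor is the other load-bearing input, and it does not follow from Cheeger--Gromoll plus polynomial growth of $\pi_1$. For open manifolds, deck transformations need not produce lines. Nabonnand's complete metric of positive Ricci curvature on $\mathbb R^3\times S^1$ has $\pi_1=\mathbb Z$, yet its universal cover cannot split a line. Saying that $\mathbb Z^{n-2}$ ``acts cocompactly on an $\mathbb R^{n-2}$ factor'' presupposes the factor. The paper takes the splitting from a recent theorem of Cucinotta--Magnabosco--Semola, and you should cite that rather than try to derive it.

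Granting the splitting, your Fubini step is correct, and it is simpler without fundamental domains. The covering $\pi$ maps $B_{\tilde g}(\tilde p,r)$ onto $B_g(p,r)$ with multiplicity at least one, so $\int_{B_g(p,r)}\mathrm{Sc}_g\,dV_g\le\int_{B_{\tilde g}(\tilde p,r)}\mathrm{Sc}_{\tilde g}\,dV_{\tilde g}\le\omega_{n-2}r^{n-2}\int_\Sigma\mathrm{Sc}_h\,dA_h\le8\pi\,\omega_{n-2}r^{n-2}$.

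In the non-$\mathbb R^n$ LCF case your conclusion is right, but your list of alternatives is not. The Zhu--Carron--Herzlich classification has exactly four cases: globally conformal to $\mathbb R^n$; conformal to a spherical space form, hence compact; locally isometric to $\mathbb R\times S^{n-1}$; flat. Products with an $\mathbb H^k$ factor, $k\ge2$, never occur, since they violate $\mathrm{Ric}\ge0$. In the cylinder case, $\mathrm{Vol}_g(B_g(p,r))\le 2r\,\mathrm{Vol}(S^{n-1})$ again comes from comparison with the ball in the cover.
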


Theorem~\ref{F} gives a positive answer to Yau's Problem~9 for $k=1$ under
the $\mathbb{Z}^{n-2}$ subgroup condition~(i) and the locally conformally flat (LCF)
assumption~(ii).  In part~(i), the key idea is that the
$\mathbb{Z}^{n-2}$ subgroup forces the Riemannian universal cover to
split off an isometric factor $\mathbb{R}^{n-2}$~\cite{2026arXiv260114231C},
reducing the problem to the two-dimensional Cohn--Vossen inequality on
the orthogonal factor.

In part~(ii), the key point is that the Zhu--Carron--Herzlich
classification~\cite{zbMATH05042496} reduces the LCF case
to the only nontrivial possibility, namely that $(M^n,g)$ is globally conformal
to $(\mathbb{R}^n,g_E)$ with nonconstant conformal factor $u$. The remaining estimate is then governed by the
asymptotic distortion of the conformal factor. The Ma--Qing
asymptotic results~\cite{zbMATH07420360} force $u$ to decay at infinity and
hence to attain a global maximum $M=u(p_0)$, while a lower bound for
$u^{\frac{2}{n-2}}$ determines how far an intrinsic ball can reach in Euclidean
coordinates. This radius grows polynomially, of order $(1+r)^{1/(1-m)}$, when
$0<m<1$, and exponentially, of order $e^{Br}$, when $m=1$. This change in
Euclidean reach is the source of the dichotomy in the argument.

\begin{remark}
In independent work, Ma~\cite{2026arXiv260613979M} obtains an upper bound for the special case of Theorem~\ref{F}(ii) where $M$ is homeomorphic to $\mathbb R^n$, using a refined singularity estimate for nonnegative $n$-superharmonic functions.\footnote{The first version of the present paper was posted on arXiv before Ma's arXiv preprint. I thank Shiguang Ma for informing me of his result in private correspondence.}
\end{remark}

In dimension three, Yau's question has been established under additional
hypotheses: when the manifold admits a pole~\cite{MR4802637}, and when
$\mathrm{Sc}_g$ is pinched between two positive constants~\cite{2025arXiv250510520M}.
In both cases the relevant limit superior is bounded above by $8\pi$.
This led Munteanu and Wang~\cite[Conjecture~1.3]{2025arXiv250510520M} to
conjecture that $8\pi$ is the sharp upper bound for one-ended complete
three-manifolds with nonnegative Ricci curvature.  The main contribution of Theorem~\ref{three-flux} below is to identify a further condition under which the $8\pi$ upper bound holds, and to prove the finiteness of the asymptotic scalar curvature flux in a foliated end setting.

\begin{theoremB}[Theorem \ref{3D Cohn}]\label{three-flux}
Let $(M^3,g)$ be a complete, connected, oriented, nonflat Riemannian
three-manifold with nonnegative Ricci curvature.  Fix $p\in M$, and
define the asymptotic scalar curvature flux by
\[
        \mathcal{C}(g)
        :=
        \limsup_{r\to\infty}
        r^{-1}\int_{B_g(p,r)}\mathrm{Sc}_g\,dV_g.
\]
Then the following hold.
\begin{enumerate}
\item[(i)] Suppose there exist constants $C_0>0$ and $R_0>0$ such that 
$\mathrm{Sc}_g(x)\le C_0\,d_g(p,x)^{-2}$ for all $x\in M$ with
$d_g(p,x)\ge R_0$.  Then $\mathcal{C}(g)\le 8\pi$.

\item[(ii)] Suppose there exists a compact domain $\Omega\Subset M^3$
with smooth nonempty boundary, $\Omega=\overline{\operatorname{Int} (\Omega)}$,
$E:=M^3\setminus\operatorname{Int}(\Omega)$ connected, and a proper
function $f:E\to[0,\infty)$ with $f\in C^0(E)\cap C^\infty(E^\circ)$,
$f^{-1}(0)=\partial E$, $|\nabla f|_g=1$ on $E^\circ:=M^3\setminus\Omega$.
Then $\mathcal{C}(g)<\infty$.
\end{enumerate}
\end{theoremB}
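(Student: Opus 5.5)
For part (ii) we start from the foliation. Since $|\nabla f|=1$ on $E^\circ$, the level sets $\Sigma_t=f^{-1}(t)$ are smooth compact hypersurfaces and $f$ is a distance function to $\partial E$. Hence $f(x)=d_g(x,\partial E)$, so $|f(x)-d_g(p,x)|\le C$ on $E$, and the coarea formula holds with unit Jacobian. We would take the normal $\nu=\nabla f$ and the mean curvature $H=\Delta f$ of $\Sigma_t$, and use the traced Gauss equation
\[
\mathrm{Sc}_g=2\mathrm{Ric}(\nu,\nu)+2K_{\Sigma_t}+|A|^2-H^2 .
\]
Combining this with the Riccati equation $\partial_t H=-|A|^2-\mathrm{Ric}(\nu,\nu)$, we get along the flow
\[
\int_{\Sigma_t}\mathrm{Sc}_g=\int_{\Sigma_t}\bigl(2K-\mathrm{Ric}(\nu,\nu)-|A|^2-H^2-2\partial_tH\bigr).
\]
Since $\frac{d}{dt}\int_{\Sigma_t}H=\int_{\Sigma_t}(\partial_tH+H^2)$, this can be rewritten as
\[
\int_{\Sigma_t}\mathrm{Sc}_g=4\pi\chi(\Sigma_t)-2\frac{d}{dt}\int_{\Sigma_t}H-\int_{\Sigma_t}\bigl(\mathrm{Ric}(\nu,\nu)+|A|^2-H^2\bigr)\le 4\pi\chi(\Sigma_t)-2\frac{d}{dt}\int_{\Sigma_t}H+\int_{\Sigma_t}H^2 .
\]
Integrating in $t$ from $0$ to $r$, the Euler characteristic term contributes at most $8\pi r$, because all $\Sigma_t$ are diffeomorphic to $\partial E$ via the gradient flow. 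The boundary term $-2\int_{\Sigma_r}H$ is controlled using $H\ge -C$ from the Laplacian comparison. The main obstacle is the term $\int_0^r\int_{\Sigma_t}H^2$. For it we would use the Riccati inequality $\partial_tH\le -H^2/2$, which gives $H\le 2/(t+c)$ once $H>0$. The area of $\Sigma_t$ is bounded by the Bishop–Gromov growth of balls. Together these give a bound on $\int H^2$ of order $|\Sigma_t|/t^2$, integrable in $t$ up to a linear bound, provided $|\Sigma_t|\lesssim t^2$ (volume growth at most cubic). This yields $\mathcal C(g)<\infty$. The compact piece $\Omega$ contributes only a constant.

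For part (i) we would run the same computation, but on the level sets of a distance-like function for which the error term vanishes asymptotically, so that the constant is sharp. Here we use $\mathrm{Sc}\le C_0d^{-2}$: by the Munteanu–Wang framework one takes a harmonic (or smoothed distance) function $u$ comparable to $d_g(p,\cdot)$ and applies the Bochner formula and the Stern-type level-set identity
\[
\int_{\{u=t\}}\Bigl(\tfrac12\mathrm{Sc}-K+\frac{|\nabla^2u|^2}{2|\nabla u|^2}\Bigr)\le \dots
\]
Integrating over $t\in[0,r]$ gives $\int_{B(r)}\mathrm{Sc}\le 4\pi\int_0^r\chi(\{u=t\})\,dt+o(r)$. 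The decay hypothesis enters in two places. First, it controls the Hessian error terms through gradient estimates of Cheng–Yau type on annuli $B(2s)\setminus B(s)$, where curvature is $O(s^{-2})$. Second, it ensures that the level sets are eventually connected with $\chi\le2$, since one end and $\mathrm{Ric}\ge0$ give $\chi(\Sigma)\le 2$ for each component. The number of components is bounded using the one-end property; the multi-ended case splits by Cheeger–Gromoll and is handled by the product structure, giving a flux of $0$ or bounded by $8\pi$ directly. The hardest step is showing that the Hessian and Gauss-curvature error terms are $o(r)$ after integration. We would try scaling: rescale each annulus to unit size, where curvature is bounded by $C_0$, and use that $\mathrm{Sc}\,d^2$ bounded implies a volume-weighted decay $r^{-1}\int_{B(r)\setminus B(\epsilon r)}\mathrm{Sc}$ that is controlled uniformly in $\epsilon$.
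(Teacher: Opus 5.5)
\textbf{Part (i) has a genuine gap.} The decisive steps are left as ``we would try'', and the two ideas that actually close the argument are missing.

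\emph{First}, you never split according to $\nu=\mathrm{AVR}(g)$. Since $|\mathrm{Ric}_g|\le\mathrm{Sc}_g$ when $\mathrm{Ric}_g\ge0$, the hypothesis is quadratic Ricci decay. Reiris's theorem then gives $\nu>0$ exactly when $M\cong\mathbb R^3$. So $\nu=0$ covers every $M\not\cong\mathbb R^3$, including one-ended examples that your Cheeger--Gromoll remark does not reach. In that case the manifold may be parabolic; otherwise the Li--Yau Green function bounds force $b=o(\rho)$. Moreover, rescaled annuli collapse. So neither a harmonic exhaustion comparable to $d_g(p,\cdot)$ nor compactness under rescaling is available. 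None is needed: coarea and the decay give $\int_{B(r)}\mathrm{Sc}_g\le C+C\,V(r)r^{-2}+2C\int_{R_0}^rV(t)t^{-3}\,dt=o(r)$.

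\emph{Second}, when $\nu>0$, a Stern/Gauss--Bonnet argument on Green-function level sets (this is Xu's theorem, giving $8\pi(1-\nu)$) controls, by coarea, only the \emph{weighted} integral $\int_{E_r}\mathrm{Sc}_g|\nabla\hat b|$, where $\hat b=\nu^{-1}b$ and $E_r=\{\hat b\le r\}$. It does not control $\int_{B(r)}\mathrm{Sc}_g$ as you wrote. Removing the weight and comparing $E_r$ with balls is precisely where the decay is used in the paper:
\begin{enumerate}
\item $\hat b/\rho\to1$ gives $B(r)\subset K_\varepsilon\cup E_{(1+\varepsilon)r}$;
\item off a compact set, $\mathrm{Sc}_g\bigl|1-|\nabla\hat b|\bigr|\le C'\hat b^{-2}|\nabla\hat b-\nabla\rho|$;
\item the averaged convergence $\mathrm{Vol}(E_r)^{-1}\int_{E_r}|\nabla\hat b-\nabla\rho|^2\to0$ yields $\int_{E_r}|\nabla\hat b-\nabla\rho|=o(r^3)$;
\item a dyadic summation turns this into an $o(r)$ error.
\end{enumerate}
Your proposed use of the decay is that $\mathrm{Sc}_g\,d^2\le C_0$ controls $r^{-1}\int_{B(r)\setminus B(\epsilon r)}\mathrm{Sc}_g$ uniformly in $\epsilon$. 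That only reproduces the crude coarea bound of size $O(C_0)$, not a vanishing error. Cheng--Yau estimates bound $|\nabla u|$ but do not give $|\nabla u|\to1$. Rescaling could supply $|\nabla\hat b|\to1$ only if you proved $C^{1,\alpha}$ convergence of the rescaled noncollapsed annuli to cones, and of $\hat b$ to the radial function; you do not. Also, $\chi\le2$ for each component is automatic. Gauss--Bonnet needs $\sum\chi\le2$ over each level set, i.e.\ connectedness, which you only assert.

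\textbf{Part (ii) is essentially the paper's argument.} Integrating your identity over $t\in[a,T]$ gives exactly the paper's decomposition $\int_{E_{a,T}}\mathrm{Sc}_g=4\pi\chi(\Sigma_a)(T-a)+2\int_{\Sigma_a}H_a-2\int_{\Sigma_T}H_T+\int_{E_{a,T}}(H^2-|A|^2)$. Your first display carries a spurious $-\mathrm{Ric}(\nu,\nu)$, which is harmless for the final inequality. Your bound $H\le2/(t+c)$ together with $|\Sigma_t|\lesssim t^2$ amounts to the paper's estimate $H^2J=4(\partial_s\sqrt J)^2\le H_a^2$.

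The boundary term, however, is mishandled. Under $\mathrm{Ric}_g\ge0$, Laplacian comparison bounds $H=\Delta f$ from above, not below. A pointwise bound $H\ge-C$ would only give $-2\int_{\Sigma_r}H\le2C|\Sigma_r|=O(r^2)$, which destroys the linear bound. What you need is $H\ge0$ on every leaf, and your own Riccati inequality $\partial_tH\le-H^2/2$ gives it. A negative value of $H$ would drive $H$ to $-\infty$ in finite time along a flow line that exists for all time. (The paper gets the same conclusion from concavity of $\sqrt J$.) The same inequality gives $\frac{d}{dt}|\Sigma_t|=\int_{\Sigma_t}H\le2|\Sigma_t|/(t+c)$, hence $|\Sigma_t|\lesssim t^2$; ball comparison alone does not give this pointwise.

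Finally, start at a level $a>0$, since $f$ is only $C^0$ at $\partial E$. Note also that finiteness needs only $\chi(\Sigma_t)=\chi(\Sigma_a)$. The bound $\chi\le2$ would require connected leaves, which the paper obtains from $M\cong\mathbb R^3$ after the Schoen--Yau--Liu reduction.
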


Since \(\mathrm{Ric}_g\ge 0\) in dimension three, the quadratic
scalar-curvature decay in part~(i) also gives quadratic Ricci curvature
decay. Reiris's theorem~\cite[Corollary~1.1]{zbMATH06423474} then
reduces the argument, after an elementary coarea estimate, to the
Euclidean volume growth case. In this remaining case, we use Xu's
Green-function exhaustion and convert Xu's weighted
estimate~\cite[Theorem~1.2]{zbMATH07177335}, by means of the quadratic
decay and a dyadic de-weighting lemma, into an unweighted bound on
geodesic balls.  We then compare the Green-function sublevel sets with geodesic balls to obtain  the desired asymptotic bound.
  The proof of part~(ii) proceeds by foliating the
exterior region of $(M,g)$ by equidistant compact hypersurfaces
evolving along a geodesic gradient flow.  Under the assumption
$\mathrm{Ric}_g\ge 0$, the Riccati equation implies concavity of the
area Jacobian, which yields uniform control on the extrinsic geometry
of the leaves.  Combining these estimates with the Gauss equation and
the Gauss--Bonnet theorem and using the topological condition
$M\cong\mathbb{R}^3$, which contributes a total of $8\pi$, we obtain
bounds on the scalar curvature over large cylindrical shells.  The
desired asymptotic estimate then follows by comparing metric balls with
these controlled shells.

The final part of this paper extends the Cohn--Vossen-type inequalities of
the previous sections to the weighted setting. Weighted Riemannian manifolds
$(M^n,g,e^{-f}\,dV_g)$ with nonnegative $m$-Bakry--\'Emery Ricci curvature
\[
  \mathrm{Ric}_{f,m}
  :=\mathrm{Ric}_g+\nabla^2 f-\frac{1}{m}\,df\otimes df\ge 0,
  \qquad m\in(0,\infty]
\]
(the last term omitted when $m=\infty$) are central to optimal transport and
$\mathrm{RCD}$ theory. Inspired by this, the author introduces
in~\cite[Section~4]{zbMATH07342230} the weighted scalar curvature
\[
  \mathrm{Sc}_{\alpha,\beta}
  :=\mathrm{Sc}_g+\alpha\Delta f-\beta|\nabla f|^2,
  \qquad\alpha,\beta\in\mathbb{R},
\]
and studies its properties on weighted manifolds. We establish
Cohn--Vossen-type growth bounds for $\int_{B_r}\mathrm{Sc}_{\alpha,\beta}\,dV_g$
under the following assumptions.

\begin{theoremC}\label{thm:intro-weighted}
Let $(M^n,g,e^{-f}\,dV_g)$ ($n\ge 3$) be a complete, connected,
noncompact weighted Riemannian manifold satisfying $\mathrm{Ric}_g\ge 0$ and
\[
  A_p:=\limsup_{r\to\infty}r^{2-n}\int_{B_g(p,r)}\mathrm{Sc}_g\,dV_g<\infty,
\]
and let $\alpha,\beta\in\mathbb{R}$.
For every $\varepsilon>0$ there exists $r_0=r_0(\varepsilon)$ such that for
all $r\ge r_0$:
\begin{enumerate}
\item[\textup{(i)}] if $\mathrm{Ric}_{f,m}\ge 0$ for some $m<\infty$, then
$$\displaystyle\int_{B_g(p,r)}|\mathrm{Sc}_{\alpha,\beta}|\,dV_g
\le C(n,m,\alpha,\beta)(A_p+\varepsilon+1)\,r^{n-2};$$
\item[\textup{(ii)}] if $\mathrm{Ric}_{f,\infty}\ge 0$, $\alpha\le 0$, and
$\beta\ge 0$, then
$\displaystyle\int_{B_g(p,r)}\mathrm{Sc}_{\alpha,\beta}\,dV_g
\le(1-\alpha)(A_p+\varepsilon)\,r^{n-2}$;
\item[\textup{(iii)}] if $\beta>0$ and $\mathrm{Sc}_{\alpha,\beta}\ge 0$
pointwise, then
$$\displaystyle\int_{B_g(p,r)}\mathrm{Sc}_{\alpha,\beta}\,dV_g
\le C(n,\alpha,\beta)(A_p+\varepsilon+1)\,r^{n-2}.$$
\end{enumerate}
\end{theoremC}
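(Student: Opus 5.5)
Throughout, I write $B_r:=B_g(p,r)$ and use $\mathrm{Sc}_{\alpha,\beta}=\mathrm{Sc}_g+\alpha\Delta f-\beta|\nabla f|^2$ to split $\int_{B_r}\mathrm{Sc}_{\alpha,\beta}$ into three pieces: the scalar term, the Laplacian term and the gradient term. The scalar term is controlled directly by the hypothesis: for $r\ge r_0(\varepsilon)$,
\[
\int_{B_r}\mathrm{Sc}_g\le (A_p+\varepsilon)\,r^{n-2}.
\]
Since $\mathrm{Ric}_g\ge0$ gives $\mathrm{Sc}_g\ge0$, this also bounds $\int_{B_r}|\mathrm{Sc}_g|$. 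So everything reduces to controlling $\int_{B_r}|\Delta f|$ and $\int_{B_r}|\nabla f|^2$ by $C\,r^{n-2}$, or to showing that they enter with a favourable sign.

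\textbf{Part (i).} I take the trace of $\mathrm{Ric}_{f,m}\ge0$:
\[
\Delta f\ \ge\ \tfrac1m|\nabla f|^2-\mathrm{Sc}_g .
\]
I multiply by a cutoff $\phi^2$, where $\phi=1$ on $B_r$, $\phi$ is supported in $B_{2r}$ and $|\nabla\phi|\le 2/r$. Integrating by parts and absorbing the cross term via $2|\phi\nabla f||\nabla\phi|\le \tfrac1{2m}\phi^2|\nabla f|^2+2m|\nabla\phi|^2$ gives
\[
\tfrac1{2m}\int\phi^2|\nabla f|^2\le \int_{B_{2r}}\mathrm{Sc}_g+2m\int|\nabla\phi|^2 .
\]
By Bishop--Gromov, $\int|\nabla\phi|^2\le C r^{-2}\mathrm{Vol}(B_{2r})\le C(n)\,r^{n-2}$. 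Hence
\[
\int_{B_r}|\nabla f|^2\le C(n,m)(A_p+\varepsilon+1)\,r^{n-2}
\]
for large $r$, after adjusting $r_0$ so the hypothesis applies at radius $2r$.

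For $|\Delta f|$, I use the pointwise decomposition $|\Delta f|\le 2(\Delta f)^- +\Delta f$. The negative part satisfies $(\Delta f)^-\le \mathrm{Sc}_g$, which is already controlled. What remains is a bound on $\int_{B_r}\Delta f$ itself. I would bound $\int\phi\,\Delta f=-\int\langle\nabla\phi,\nabla f\rangle$ by Cauchy--Schwarz: it is at most
\[
\Bigl(\int|\nabla\phi|^2\Bigr)^{1/2}\Bigl(\int_{B_{2r}}|\nabla f|^2\Bigr)^{1/2}\le C\,r^{n-2},
\]
using the gradient bound at radius $2r$, which is obtained the same way at radius $4r$.

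The step I expect to be the main obstacle is the cutoff: the smooth-cutoff estimate controls $\int\phi\,\Delta f$, not $\int_{B_r}\Delta f$. I would close the gap by writing $|\Delta f|=2(\Delta f)^-+\Delta f$ inside $\int\phi\,|\Delta f|$, with $\phi\ge0$ and $\phi=1$ on $B_r$, which handles it cleanly. Combining the three pieces then gives the bound in (i).

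\textbf{Part (ii).} With $m=\infty$, the trace gives $\Delta f\ge-\mathrm{Sc}_g$. Since $\alpha\le0$, this yields $\alpha\Delta f\le -\alpha\,\mathrm{Sc}_g$, and since $\beta\ge0$ the gradient term is nonpositive. Therefore, pointwise,
\[
\mathrm{Sc}_{\alpha,\beta}\le(1-\alpha)\,\mathrm{Sc}_g .
\]
Integrating over $B_r$ and applying the hypothesis gives $\int_{B_r}\mathrm{Sc}_{\alpha,\beta}\le(1-\alpha)(A_p+\varepsilon)\,r^{n-2}$, with no further work needed.

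\textbf{Part (iii).} Here $\mathrm{Sc}_{\alpha,\beta}\ge0$ is assumed pointwise, so it suffices to bound $\int\phi^2\mathrm{Sc}_{\alpha,\beta}$. Integrating by parts,
\[
\int\phi^2\,\alpha\Delta f=-2\alpha\int\phi\langle\nabla\phi,\nabla f\rangle\le \tfrac{\beta}{2}\int\phi^2|\nabla f|^2+\tfrac{2\alpha^2}{\beta}\int|\nabla\phi|^2 .
\]
This gives
\[
\int\phi^2\,\mathrm{Sc}_{\alpha,\beta}+\tfrac\beta2\int\phi^2|\nabla f|^2\le\int_{B_{2r}}\mathrm{Sc}_g+\tfrac{C\alpha^2}{\beta}\,r^{n-2}.
\]
Since $\mathrm{Sc}_{\alpha,\beta}\ge0$ and $\phi=1$ on $B_r$, dropping the gradient term yields $\int_{B_r}\mathrm{Sc}_{\alpha,\beta}\le C(n,\alpha,\beta)(A_p+\varepsilon+1)\,r^{n-2}$, where the factor $2^{n-2}$ from passing from radius $r$ to $2r$ is absorbed into the constant.
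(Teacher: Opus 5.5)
Your proposal is correct and follows the paper's argument: the same Lipschitz cutoff with a Caccioppoli/Young absorption for (i) and (iii), and the same pointwise bound $\mathrm{Sc}_{\alpha,\beta}\le(1-\alpha)\,\mathrm{Sc}_g$ for (ii). The differences are cosmetic. In (i) you control $\int_{B_r}|\Delta f|$ via $|\Delta f|\le 2\,\mathrm{Sc}_g+\Delta f$, whereas the paper tests the nonnegative trace $Q_m$ against $\phi_r^2$; your use of $\phi$ instead of $\phi^2$ costs one extra doubling of the radius. In (iii) you absorb the cross term directly into $-\beta\int\phi^2|\nabla f|^2$, rather than first bounding $I_r$ as the paper does.
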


The dichotomy between~\textup{(i)} and~\textup{(ii)}--\textup{(iii)} is
sharp: the finite-dimensional hypothesis is coercive and yields a two-sided
bound for every $(\alpha,\beta)$, whereas $\mathrm{Ric}_{f,\infty}\ge 0$
admits no sign-free absolute estimate for any $(\alpha,\beta)\ne(0,0)$,
leaving only the conditional bounds~\textup{(ii)}--\textup{(iii)}, whose
parameter range is optimal.

The proofs rest on a single localization device: a family of cutoffs
$\phi_r$ supported in $B_{2r}$ with $|\nabla\phi_r|\le C/r$, for which
$\int\phi_r^2\,\mathrm{Sc}_g$ and $\int|\nabla\phi_r|^2$ are both
$O(r^{n-2})$ by $\mathrm{Ric}_g\ge 0$, $A_p<\infty$, and the Bishop--Gromov inequality.
For~(i), tracing $\mathrm{Ric}_{f,m}\ge 0$ gives the nonnegative deficit
$Q_m:=\mathrm{Sc}_g+\Delta f-\tfrac{1}{m}|\nabla f|^2\ge 0$; testing against
$\phi_r^2$ and applying integration by parts, Cauchy--Schwarz, and Young
yields coercive $L^1$ control of $|\nabla f|^2$ and $Q_m$, which is then inserted into the algebraic identity $\mathrm{Sc}_{\alpha,\beta}$. Part~(ii) is purely
algebraic: the sign conditions force
$\mathrm{Sc}_{\alpha,\beta}\le(1-\alpha)\,\mathrm{Sc}_g$ pointwise. Part~(iii)
uses no Bakry--\'Emery hypothesis.  Instead, the pointwise condition
$\mathrm{Sc}_{\alpha,\beta}\ge 0$ supplies the coercivity, via the same
Caccioppoli argument, that $\mathrm{Ric}_{f,m}\ge 0$ provides in~(i).
Sharpness is witnessed by explicit functions $f$ on
$(\mathbb{R}^n,g_{\mathrm{Euc}})$.

\paragraph*{Organization of the Paper.}
The paper is organized around the three main results; each is proved in a separate section. In Section~\ref{LCF case}, we treat the \(n\)-dimensional case and
prove Theorem~\ref{F}, covering both the \(\mathbb Z^{n-2}\) subgroup
condition and the LCF case. In Section~\ref{dim 3}, we
specialize to dimension three and prove Theorem~\ref{three-flux},
establishing the upper bound under quadratic
scalar-curvature decay and the finiteness of the asymptotic scalar-curvature
flux in the foliated-end setting. In Section~\ref{weighted}, we develop the
weighted analogue and prove Theorem~\ref{thm:intro-weighted}, deriving
Cohn--Vossen-type growth bounds for \(\mathrm{Sc}_{\alpha,\beta}\) in the
finite- and infinite-dimensional Bakry--\'Emery regimes.

\paragraph*{Acknowledgments.}
This work is part of a broader project on LCF manifolds
that I began during my stay at the Yau  Center, where I
was supported by NSFC grant 12401063 and partially supported by NSFC grant 12271284, and
where I benefited from the support of Akito Futaki and Shing-Tung Yau. I also thank Man Chun Lee for pointing out the reference~\cite{zbMATH06423474}.

\section{Cohn--Vossen-Type Inequalities for LCF Manifolds}\label{LCF case}
 In this section, we prove Cohn--Vossen-type inequalities for locally conformally flat manifolds, giving a partial positive answer to Yau's problem. The assumption of nonnegative Ricci curvature is essential. Indeed, without it the hyperbolic $n$-space gives a counterexample, since it has constant negative Ricci curvature and exponential volume growth. For a complete asymptotically flat \(n\)-manifold of order $ \tau>0$, one can show that $r^{2-n}\int_{B_g(p,r)} \mathrm{Sc}_g\, dV_g \to 0$ as  $r \to \infty $.  We recall that  the classical Cohn--Vossen inequality \cite{zbMATH02533557} asserts that for a finitely connected, complete, oriented surface $(\Sigma,g)$, if the total integral of the sectional curvature exists, then it is bounded above by $ 4\pi\, \chi(\Sigma).$  

For a complete LCF manifold $M^n$ with nonnegative Ricci curvature,  Zhu \cite{zbMATH00537381} proves that the universal cover of such a manifold is either conformally equivalent to \(S^{n}\) or \(\mathbb{R}^{n}\), or is isometric to \(\mathbb{R} \times S^{n-1}\). Zhu also gives examples showing that for 
$(\mathbb{R}^{n},\, g = (r^{2}+1)^{-2\alpha} g_{E}), $
$r^{2} = \sum_{i=1}^{n} x_{i}^{2},$ with \(\alpha \in [0, \frac{1}{2}]\), the metric \(g\) is complete and satisfies \(\mathrm{Ric}_{g} \ge 0\).  
Moreover, when \(\alpha = \tfrac{1}{2}\), one has \(\mathrm{Vol}_{g}(B(r)) = c_{n} r\), while for \(0 \le \alpha < \tfrac{1}{2}\),
\(\mathrm{Vol}_{g}(B(r)) = c_{n} r^{n}\). More results on LCF manifolds can be found in \cite{2025arXiv251213528D}. Carron and Herzlich improve Zhu’s result and classify such manifolds in \cite{zbMATH05042496}.

\begin{theorem}[Zhu--Carron--Herzlich]
Let $(M^n,g)$ ($n\geq 3$) be a complete locally conformally flat Riemannian manifold with nonnegative Ricci curvature. Then exactly one of the
following holds:
\begin{enumerate}
\item $(M^n,g)$ is globally conformally equivalent to $(\mathbb{R}^n,g_E)$, equipped with a
nonflat metric of nonnegative Ricci curvature.
\item $(M^n,g)$ is  globally conformally equivalent to a space form of positive curvature
equipped with a conformal metric of nonnegative Ricci curvature.
\item $(M^n,g)$ is locally isometric to the cylinder $\bigl(\mathbb{R} \times S^{n-1},\, g_{\mathrm{cyl}} := dt^2 + g_{st}\bigr)$.
\item $(M^n,g)$ is isometric to a complete flat manifold.
\end{enumerate}
\end{theorem}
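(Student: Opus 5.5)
The plan is to derive the classification from Zhu's theorem on universal covers, combined with Kuiper's developing map and Schoen--Yau-type results for locally conformally flat manifolds with nonnegative scalar curvature. Since $\mathrm{Ric}_g\ge0$ implies $\mathrm{Sc}_g\ge0$, the Schoen--Yau theory applies. Let $\widetilde M$ be the Riemannian universal cover with the pulled-back metric $\tilde g$. It is complete, locally conformally flat, and satisfies $\mathrm{Ric}_{\tilde g}\ge0$. By Zhu's result quoted above, exactly one of three cases holds: $\widetilde M$ is conformal to $S^n$, $\widetilde M$ is conformal to $\mathbb R^n$, or $(\widetilde M,\tilde g)$ is isometric to $\mathbb R\times S^{n-1}$. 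I treat these in turn.

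\emph{Case $\widetilde M\cong_{\mathrm{conf}} S^n$.} Then $\widetilde M$ is compact, so $\pi_1(M)$ is finite. The deck group acts by conformal diffeomorphisms of $S^n$ (since $\tilde g$ is conformal to the round metric), freely and with finite order. A finite group of conformal transformations of $S^n$ fixes a point of the hyperbolic ball, so it is conjugate into $O(n+1)$. Hence $M$ is conformal to a spherical space form, which is alternative 2.

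\emph{Case $\widetilde M$ isometric to $\mathbb R\times S^{n-1}$.} This is alternative 3 directly, since local isometry to the cylinder passes to the quotient.

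\emph{Case $\widetilde M\cong_{\mathrm{conf}}\mathbb R^n$, so $\tilde g=u^{4/(n-2)}g_E$.} The deck group $\Gamma$ acts by conformal maps of $\mathbb R^n$ that preserve $\tilde g$. Any conformal map of $\mathbb R^n$ onto itself is a Euclidean similarity. If $\tilde g$ is flat, then $M$ is flat, which is alternative 4. Otherwise I argue that $\Gamma$ is trivial, giving alternative 1. The intended argument is as follows. If $\Gamma\ne1$, it contains a fixed-point-free similarity $\gamma$. A similarity with ratio $\lambda\neq1$ has a fixed point, so $\gamma$ is a Euclidean isometry without fixed points, i.e.\ a screw motion with nonzero translation part. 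Then $\Gamma$ is a discrete group of Euclidean isometries, so $M$ is conformally a flat manifold with infinite fundamental group. In this situation, Bieberbach theory together with the splitting theorem for the Riemannian metric (a line in $\widetilde M$ arises from the translation axis, using $\mathrm{Ric}\ge0$ and $\gamma$-invariance) should force $\tilde g$ to split off a line isometrically: $\tilde g=dt^2+h$. Locally conformal flatness plus a product with a line forces $h$ to have constant curvature. Nonnegative Ricci then makes the metric either flat or locally $\mathbb R\times S^{n-1}$. The flat case is excluded, and the cylinder case contradicts $\widetilde M\cong\mathbb R^n$ topologically. Hence $\Gamma=1$.

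\emph{Main obstacle.} The hardest step is producing the line in $\widetilde M$ from the isometry $\gamma$. My first attempt is to use the standard fact that on a complete manifold with $\mathrm{Ric}\ge0$ and an infinite-order deck transformation of linear displacement growth, the Cheeger--Gromoll splitting theorem applies to the cover. Failing that, I would invoke Schoen--Yau directly: the developing map is injective, and the complement of the image has small Hausdorff dimension. Nonnegative Ricci curvature then forces the holonomy to be trivial unless the metric is flat or cylindrical, which is Carron--Herzlich's refinement. Finally, mutual exclusivity of the four alternatives is checked by topology (compactness, $\pi_1$) and by flatness.
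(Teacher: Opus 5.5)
The paper does not prove this statement; it quotes it from Zhu and Carron--Herzlich. Most of your reduction is sound:
\begin{itemize}
\item the $S^n$ case, since a finite M\"obius group is conjugate into $O(n+1)$;
\item the cylinder case;
\item the Liouville argument showing that the deck group $\Gamma$ acts on $\mathbb R^n$ by fixed-point-free Euclidean isometries;
\item the flat case, and mutual exclusivity of the alternatives.
\end{itemize}
There is a genuine gap, however. The only step that goes beyond Zhu's theorem is that a nonflat $\tilde g=u^{4/(n-2)}g_E$ cannot be invariant under a nontrivial such $\Gamma$. That is exactly the step you have not proved.

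Your mechanism for this step does not work as stated. The translation axis of $\gamma$ is a Euclidean line, not a $\tilde g$-geodesic. Nothing shows that $d_{\tilde g}(x,\gamma^k x)$ grows linearly in $k$: the factor $u$ may be small far from the axis, and $\tilde g$-minimizers between $\gamma^{-k}x$ and $\gamma^k x$ may detour through that region. Even granting linear growth, the Cheeger--Gromoll argument needs the midpoints of these minimizing segments to return, under deck transformations, to a fixed compact set. Cocompactness of $\Gamma$ is what provides this; here $M=\mathbb R^n/\Gamma$ is noncompact, so there is no such control.

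In fact an infinite-order deck transformation alone does not produce a line. Nabonnand's complete metrics of positive Ricci curvature on $\mathbb R^3\times S^1$ have $\pi_1=\mathbb Z$, yet their universal covers contain no line, since a line would force a splitting and hence a zero Ricci direction. So at this point the conformally flat structure has to be used, and your fallback does not use it. Appealing to ``Carron--Herzlich's refinement'' is circular. The Schoen--Yau dimension bound for the complement of the developing image is vacuous here, since that image is all of $\mathbb R^n$.

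One way to close the gap uses an input the paper itself relies on later.
\begin{itemize}
\item For nonflat $\tilde g$, the Ma--Qing asymptotics give $u(x)\to0$ as $|x|\to\infty$.
\item Each $\gamma\in\Gamma$ is both a $\tilde g$-isometry and a Euclidean isometry, so $u\circ\gamma=u$.
\item If $\gamma\neq 1$, freeness forces infinite order, and proper discontinuity gives $|\gamma^k x|\to\infty$.
\item Hence $u(x)=\lim_k u(\gamma^k x)=0$, a contradiction.
\end{itemize}
Thus $\Gamma$ is trivial and alternative~1 holds.
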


Thus, a closed LCF manifold \(N\) with positive Ricci curvature is conformally equivalent to a spherical space form. Note that \(N\) need not be isometric to a spherical space form, since one may perform a conformal deformation that is \(C^{\infty}\)-close to the standard round metric and has positive but nonconstant Ricci curvature.
 However, if one further assumes that the scalar curvature is constant, then \(N\) is indeed isometric to a spherical space form \cite{MR220213}.

We use the Zhu-Carron-Herzlich classification together with the topological restriction result for \(M\) due to Cucinotta--Magnabosco--Semola~\cite{2026arXiv260114231C} to prove   Theorem \ref{F}.

\subsection{Conformal metrics on Euclidean space}
The computation in Zhu's example above further suggests that Yau's problem admits a positive answer in the case \(k=1\). In fact, the following proposition treats this model situation. We include this elementary model situation because exterior radial symmetry reduces the end to a one-dimensional warped product analysis, making the expected $r^{n-2}$-growth transparent and clearly separating the symmetric case from the genuinely nonsymmetric one.

\begin{proposition}\label{global conformal to flat}
Let $(\mathbb{R}^n,g)$, $n\ge 3$, be complete with
\[
 g=u^{\frac{4}{n-2}}\,g_E,
 \qquad
 u\in C^\infty(\mathbb R^n),\quad u>0.
\]
Assume $\operatorname{Ric}_g\ge 0$ on $\mathbb R^n$ and that there exists a point
$x_0\in\mathbb R^n$ and a compact set $K\subset \mathbb R^n$ such that $u$ is
radially symmetric with respect to $x_0$ on $\mathbb R^n\setminus K$.
Let $\omega_{n-1}:=\operatorname{Vol}_{g_{st}}(S^{n-1})$. Then for every $p\in\mathbb R^n$,
\[
\limsup_{r\to\infty} r^{2-n}\int_{B_g(p,r)} \operatorname{Sc}_g\,dV_g
\le (n-1)\omega_{n-1}.
\]
\end{proposition}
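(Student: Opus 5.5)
Outside $K$ the metric is a warped product $g = dt^2 + \phi(t)^2 g_{st}$, where $s=|x-x_0|$, $t=\int^s u(\sigma)^{2/(n-2)}\,d\sigma$ and $\phi(t)=s\,u(s)^{2/(n-2)}$. The plan is to exploit this structure: compute $\mathrm{Sc}_g$ explicitly in terms of $\phi$, control $\phi$ and $\phi'$ using $\mathrm{Ric}_g\ge0$, and integrate over the radial shells. Because $\mathrm{Ric}_g\ge 0$ and $\mathrm{Sc}_g\ge0$, a change of base point only shifts radii by the bounded quantity $d_g(p,x_0)$. It therefore suffices to take $p=x_0$ and to replace $B_g(p,r)$ by $\{t\le r+c\}$ together with the compact part $K$. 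The compact part contributes $O(1)$, which vanishes after multiplying by $r^{2-n}$.

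For the warped product the standard formulas give radial Ricci curvature $-(n-1)\phi''/\phi$ and tangential Ricci curvature $-\phi''/\phi+(n-2)(1-\phi'^2)/\phi^2$. The scalar curvature is then
\[
\mathrm{Sc}_g=-2(n-1)\frac{\phi''}{\phi}+(n-1)(n-2)\frac{1-\phi'^2}{\phi^2}.
\]
Nonnegative radial Ricci gives $\phi''\le0$ on the end, so $\phi'$ is nonincreasing. Since $\phi>0$ for all large $t$, $\phi'$ cannot become negative, so $0\le\phi'$. We also want $\phi'\le 1$. The tangential condition only gives $\phi'^2\le 1-\phi\phi''/(n-2)$, which is weaker because $\phi''\le0$. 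The bound should instead come from monotonicity: $\phi'$ is nonincreasing and tends to a limit $a\in[0,\infty)$. If $a>1$, the tangential condition fails at large $t$, since $-\phi''/\phi$ is integrable in a suitable sense while $(1-a^2)/\phi^2$ is negative and of size $\phi^{-2}$. This step needs some care, and one possible alternative is Bishop–Gromov comparison of the area of geodesic spheres. In any case we only need the asymptotic quantity $\limsup\phi'^2\le1$ and the sign information, not a pointwise bound.

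The volume element is $\phi^{n-1}\,dt\,dV_{st}$. Hence
\[
\int_{\{t_0\le t\le r\}}\mathrm{Sc}_g\,dV_g=\omega_{n-1}\int_{t_0}^r\Bigl[-2(n-1)\phi''\phi^{n-2}+(n-1)(n-2)\phi^{n-3}(1-\phi'^2)\Bigr]dt.
\]
Integrating the first term by parts gives
\[
-2(n-1)\phi'\phi^{n-2}\Big|_{t_0}^r+2(n-1)(n-2)\int\phi^{n-3}\phi'^2\,dt.
\]
Combining the two terms, the total integrand becomes
\[
(n-1)(n-2)\phi^{n-3}\bigl(1+\phi'^2\bigr)-\text{boundary}.
\]
The boundary term at $r$ is $-2(n-1)\phi'\phi^{n-2}\le0$, so it can be discarded in an upper bound. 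By concavity and $\phi(t_0)>0$ we have $\phi(t)\le \phi(t_0)+\phi'(t_0)(t-t_0)$, so $\phi\le at+O(1)$ with $a=\lim\phi'\le1$; more precisely $\phi(t)\le t+o(t)$ asymptotically.

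If $a>0$, then $\phi\sim at$ and the integral is asymptotically
\[
(n-1)(n-2)(1+a^2)a^{n-3}\frac{r^{n-2}}{n-2}-2(n-1)a\cdot a^{n-2}r^{n-2}=(n-1)a^{n-3}(1-a^2)r^{n-2},
\]
which is at most $(n-1)r^{n-2}$ because $a\le1$. If $a=0$, then $\phi=o(t)$ and the integral is $o(r^{n-2})$. Multiplying by $\omega_{n-1}$ gives the bound $(n-1)\omega_{n-1}$.

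The main obstacle is justifying the asymptotics of the remaining integral: we must replace $\phi$ by $at$ inside it while keeping the negative boundary term rather than discarding it, since dropping it would lose the sharp constant. The monotone convergence $\phi'\downarrow a$ together with $\phi/t\to a$ (l'Hôpital) handles this, because $\int_{t_0}^r\phi^{n-3}\phi'^2\,dt$ and $\phi^{n-2}(r)\phi'(r)$ are both asymptotic to the computed model quantities.
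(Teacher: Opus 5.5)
Your overall plan matches the paper's: warped-product end, $\phi''\le0$, $\phi'\ge0$, $a=\lim\phi'\in[0,1]$, integration by parts keeping the boundary term when $a>0$, and a base-point shift using $\mathrm{Sc}_g\ge0$. Your $a>0$ computation, $(n-1)a^{n-3}(1-a^2)\le n-1$, is also correct. \textbf{The gap is the case $a=0$ in dimension three.} Your claim that $\phi=o(t)$ makes the integral $o(r^{n-2})$ holds only for $n\ge4$, where eventually $\phi^{n-3}\le(\varepsilon t)^{n-3}$. For $n=3$ the factor $\phi^{n-3}$ is identically $1$. The integrand $(n-1)(n-2)\phi^{n-3}(1+\phi'^2)=2(1+\phi'^2)$ then tends to $2$, so the integral is $2r+o(r)$, and the boundary term $-4\phi'(r)\phi(r)$ is only $o(r)$. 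Hence the normalized integral tends to exactly $2\omega_2=(n-1)\omega_{n-1}$, not to $0$. Two checks confirm this. For $n=3$ your own formula gives $2(1-a^2)\to2$ as $a\downarrow0$. And Zhu's metric $(|x|^2+1)^{-1}g_E$ on $\mathbb R^3$, for which $\phi\to1$, has flux $8\pi=2\omega_2$ by the remark following the proposition. So $n=3$, $a=0$ is precisely the case where the constant is attained, and it needs its own argument, as in the paper's Case 3. Discard the nonpositive boundary term and use $\int_{t_0}^r(1+\phi'^2)\,dt\le C_\varepsilon+(1+\varepsilon^2)r$ once $\phi'\le\varepsilon$; this gives $\limsup\le 2\omega_2$. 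The final inequality survives, but your case analysis as written rests on a false intermediate claim.

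Two smaller points. First, your sketch of $a\le1$ becomes a proof once you multiply the tangential inequality by $\phi$. If $a>1$, then $-\phi''\ge(n-2)(a^2-1)/\phi\ge c/t$, since concavity gives $\phi\le Ct$. But $c/t$ is not integrable, while $\int_{t_0}^\infty(-\phi'')\,dt=\phi'(t_0)-a<\infty$. This is exactly the paper's argument. Your Bishop--Gromov alternative also works: compare $\omega_{n-1}\int_{t_0}^R\phi^{n-1}\,dt\sim\omega_{n-1}a^{n-1}R^n/n$ with $\mathrm{Vol}_g(B_g(p,R+c))\le\omega_{n-1}(R+c)^n/n$. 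Second, replacing $B_g(p,r)$ by $\{t\le r+c\}$ plus a compact core needs two facts. One is that $t\to\infty$, which follows from completeness. The other is that $t$ is, up to an additive constant, the $g$-distance to a closed Euclidean ball containing $K$. The paper proves both (any curve reaching the ball must cross its boundary sphere, and along the end its length is at least the change in $t$); you should at least indicate why they hold.
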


\begin{proof}
After translating the Euclidean coordinates, we may assume that $x_0=0$. Choose $R_0 > 0$ and $\eta > 0$ such that $K \subset B_{\mathbb{E}}(0, R_0 - \eta)$. Since $u$ is radially symmetric about $0$ on $\mathbb{R}^n \setminus K$, we may write
$u(x) = U(|x|) $ for $|x| \ge R_0 - \eta,$
for some smooth positive function $U \colon [R_0 - \eta, \infty) \to (0,\infty)$. 

Set $V(r):=U(r)^{\frac{2}{n-2}}.$ Then on $\{|x|\ge R_0-\eta\}$, $g=V(r)^2\bigl(dr^2+r^2g_{st}\bigr).$ Define
\[
s(r):=\int_{R_0}^r V(t)\,dt,\qquad r\ge R_0-\eta.
\]
Since $V>0$, the map $r\mapsto s(r)$ is strictly increasing. We claim that
$s(r)\to\infty$  as $r\to\infty.$
Indeed, if $s(r)$ remained bounded above, then for any $\theta\in S^{n-1}$ and any
sequence $r_j\to\infty$ one would have
\[
d_g(r_j\theta,r_k\theta)\le \int_{r_j}^{r_k}V(t)\,dt
=|s(r_k)-s(r_j)|\to 0
\qquad (j,k\to\infty),
\]
so $\{r_j\theta\}$ would be a $d_g$-Cauchy sequence. Since $(\mathbb R^n,g)$ is
complete, it would converge in the metric topology, hence also in the manifold
topology; but this is impossible because $|r_j\theta|=r_j\to\infty$.
Therefore $s(r)\to\infty$  as $r\to \infty$.

Set $\sigma:=\int_{R_0-\eta}^{R_0}V(t)\,dt>0.$ Then $s:[R_0-\eta,\infty)\longrightarrow[-\sigma,\infty)$
is a smooth diffeomorphism; let $r=r(s)$ denote its inverse. Define
\[
\psi(s):=V(r(s))\,r(s),\qquad s\in[-\sigma,\infty).
\]
Then $\psi$ is smooth and positive on $[-\sigma,\infty)$, and on
$\{|x|\ge R_0-\eta\}$ the metric becomes
\[
g=ds^2+\psi(s)^2g_{st}.
\]
In particular, on $\mathbb R^n\setminus \overline{B_{\mathbb E}(0,R_0)}$
(that is, on $\{s\ge 0\}$) we have $g=ds^2+\psi(s)^2g_{st}.$
Set
\[
\Omega_0:=\overline{B_{\mathbb E}(0,R_0)},
\qquad
\rho(x):=d_g(x,\Omega_0).
\]
We show that $\rho(x)=s(|x|)$ for every $x\in \mathbb R^n\setminus \Omega_0.$ Indeed, the radial segment from $x$ to $\partial\Omega_0$ has $g$-length $s(|x|)$, so
$\rho(x)\le s(|x|).$ For the reverse inequality, let $\gamma:[0,1]\to\mathbb R^n$ be any piecewise
$C^1$ curve from $x$ to $\Omega_0$, and define $t_*:=\inf\{t\in[0,1]:\gamma(t)\in\Omega_0\}.$
Since $\Omega_0$ is closed, $\gamma(t_*)\in\Omega_0$; by minimality of $t_*$,
in fact $\gamma(t_*)\in\partial\Omega_0$. Hence $|\gamma(t_*)|=R_0$, so
$s(|\gamma(t_*)|)=0$.

Along $\gamma|_{[0,t_*]}$ we may write $\gamma(t)=(r(t),\theta(t))$, and on this
segment the metric is $ds^2+\psi(s)^2g_{st}$. Therefore,
\[
|\gamma'(t)|_g^2
=
|s'(t)|^2+\psi(s(t))^2|\theta'(t)|_{g_{st}}^2
\ge |s'(t)|^2.
\]
Thus,
\[
L_g(\gamma)\ge \int_0^{t_*}|s'(t)|\,dt
\ge \left|\int_0^{t_*} s'(t)\,dt\right|
=|s(|\gamma(t_*)|)-s(|x|)|
=s(|x|).
\]
Taking the infimum over all such curves proves the claim.

Consequently,
\[
\{\rho \le R\} = \Omega_0 \cup \{x \in \mathbb{R}^n \setminus \Omega_0 : s(|x|) \le R\},
\]
and the two sets overlap only along $\partial\Omega_0$, hence on a set of
measure zero.

For the warped product metric $g=ds^2+\psi(s)^2g_{st},$
the standard warped-product Ricci formulas give, on $(-\sigma,\infty)$,
\[
\operatorname{Ric}(\partial_s,\partial_s)=-(n-1)\frac{\psi''}{\psi},
\]
and
\[
\operatorname{Ric}|_{TS^{n-1}}
=
\left((n-2)\frac{1-(\psi')^2}{\psi^2}-\frac{\psi''}{\psi}\right)
g|_{TS^{n-1}}.
\]
Since $\operatorname{Ric}_g\ge 0$, it follows that
\[
\psi''\le 0,
\qquad
(n-2)\bigl(1-(\psi')^2\bigr)-\psi\psi''\ge 0
\quad\text{on }[0,\infty).
\]
Hence $\psi'$ is nonincreasing on $[0,\infty)$. We next show that
\[
\psi'(s)\ge 0\qquad\text{for every }s\ge 0.
\]
Indeed, if $\psi'(s_0)<0$ for some $s_0\ge 0$, then monotonicity gives
$\psi'(s)\le \psi'(s_0)<0$ for all $s\ge s_0$, and therefore
\[
\psi(s)\le \psi(s_0)+(s-s_0)\psi'(s_0)\to -\infty
\qquad\text{as }s\to\infty,
\]
contradicting $\psi>0$. Thus $\psi'$ decreases to a limit
\[
L:=\lim_{s\to\infty}\psi'(s)\in[0,\infty).
\]

We claim that $L\le 1$. Suppose instead that $L>1$. Then there exist
$\varepsilon>0$ and $S>0$ such that
\[
\psi'(s)\ge 1+\varepsilon\qquad\text{for all }s\ge S.
\]
Since $\psi'$ is nonincreasing, for $s\ge S$ we also have
\[
\psi(s)\le \psi(S)+\psi'(S)(s-S)\le Cs
\]
for some  constant $C>0$. On the other hand, the tangential Ricci inequality yields
\[
\psi''(s)
\le -\frac{(n-2)\bigl((1+\varepsilon)^2-1\bigr)}{\psi(s)}
\le -\frac{c}{s}
\qquad\text{for all }s\ge S
\]
for some $c>0$. Integrating from $S$ to $s$, we obtain
\[
\psi'(s)\le \psi'(S)-c\log\frac{s}{S}\to -\infty,
\]
contradicting $\psi'\ge 0$. Hence
\[
0\le L\le 1.
\]

We also have
\[
\lim_{s\to\infty}\frac{\psi(s)}{s}=L.
\]
Indeed,
\[
\frac{\psi(s)}{s}
=
\frac{\psi(0)}{s}
+
\frac1s\int_0^s \psi'(t)\,dt,
\]
and since $\psi'(t)\to L$, the Ces\`aro average on the right converges to $L$.

Tracing the Ricci formulas gives
\[
\operatorname{Sc}_g
=
-2(n-1)\frac{\psi''}{\psi}
+
(n-1)(n-2)\frac{1-(\psi')^2}{\psi^2}
\]
on $\{s\ge 0\}$. Define
\[
J(R):=\int_{\{\rho\le R\}} \operatorname{Sc}_g\,dV_g.
\]
Since $dV_g=\psi(s)^{n-1}\,ds\,d\mu_{g_{st}}$
on the end and $
\{\rho \le R\} = \Omega_0 \cup \{x \in \mathbb{R}^n \setminus \Omega_0 : s(|x|) \le R\}$ with only measure-zero overlap, we obtain
\[
J(R)
=
A_0
+
(n-1)\omega_{n-1}\int_0^R
\Bigl[
-2\psi''\psi^{n-2}
+
(n-2)(1-(\psi')^2)\psi^{n-3}
\Bigr]\,ds,
\]
where $A_0:=\int_{\Omega_0}\operatorname{Sc}_g\,dV_g<\infty.$ Because $\psi$ is smooth in a neighborhood of $s=0$, we may integrate by parts
directly on $[0,R]$:
\[
\int_0^R -2\psi''\psi^{n-2}\,ds
=
-2\psi'(R)\psi(R)^{n-2}
+
2\psi'(0)\psi(0)^{n-2}
+
2(n-2)\int_0^R(\psi')^2\psi^{n-3}\,ds.
\]
Hence
\begin{equation}\label{eq:global-conformal-J}
J(R)
=
A_1
+
(n-1)\omega_{n-1}
\left[
-2\psi'(R)\psi(R)^{n-2}
+
(n-2)\int_0^R(1+(\psi')^2)\psi^{n-3}\,ds
\right],
\end{equation}
where
\[
A_1:=A_0+2(n-1)\omega_{n-1}\psi'(0)\psi(0)^{n-2}.
\]

\noindent\emph{Case 1: $L>0$.}
We evaluate the limit of $R^{2-n}J(R)$ by analyzing the boundary and integral
terms in \eqref{eq:global-conformal-J} separately. In this case $\psi(s)$ has positive linear growth at infinity, since
\[
\lim_{s\to\infty}\frac{\psi(s)}{s}=L>0.
\]
Accordingly, both the boundary term and the integral term in
\eqref{eq:global-conformal-J} contribute at order $R^{n-2}$.
Thus, we have
\[
\lim_{R\to\infty} R^{2-n}\bigl[-2\psi'(R)\psi(R)^{n-2}\bigr]
=
\lim_{R\to\infty}
-2\psi'(R)\left(\frac{\psi(R)}{R}\right)^{n-2}
=
-2L^{\,n-1}.
\]

Set
\[
c:=(1+L^2)L^{\,n-3}.
\]
Since $\psi'(s)\to L$ and $\psi(s)/s \to L$, it follows that
\[
\frac{(1+(\psi'(s))^2)\psi(s)^{n-3}}{s^{n-3}}
=
(1+(\psi'(s))^2)\left(\frac{\psi(s)}{s}\right)^{n-3}
\longrightarrow c
\qquad (s\to\infty).
\]
We claim that
\[
\lim_{R\to\infty}
R^{2-n}\int_0^R (1+(\psi')^2)\psi^{n-3}\,ds
=
\frac{c}{n-2}.
\]
Indeed, fix $\delta>0$. There exists $S_\delta>0$ such that for all
$s\ge S_\delta$,
\[
(c-\delta)s^{n-3}
\le
(1+(\psi'(s))^2) (\frac{\psi(s)}{s})^{n-3}s^{n-3}
\le
(c+\delta)s^{n-3}.
\]
Integrating from $S_\delta$ to $R$ yields
\[
\int_{S_\delta}^R (c-\delta)s^{n-3}\,ds
\le
\int_{S_\delta}^R (1+(\psi')^2)\psi^{n-3}\,ds
\le
\int_{S_\delta}^R (c+\delta)s^{n-3}\,ds.
\]
After dividing by \(R^{n-2}\) and letting \(R\to\infty\), we obtain
\[
\frac{c-\delta}{n-2}
\le
\liminf_{R\to\infty}
R^{2-n}\int_0^R (1+(\psi')^2)\psi^{n-3}\,ds
\le
\limsup_{R\to\infty}
R^{2-n}\int_0^R (1+(\psi')^2)\psi^{n-3}\,ds
\le
\frac{c+\delta}{n-2},
\]
since the contribution of the fixed interval \([0,S_\delta]\) vanishes after
multiplication by \(R^{2-n}\). Letting \(\delta\downarrow 0\) proves the claim.

Dividing \eqref{eq:global-conformal-J} by \(R^{n-2}\) and letting \(R\to\infty\),
we conclude that
\[
\lim_{R\to\infty} R^{2-n}J(R)
=
(n-1)\omega_{n-1}
\left[
-2L^{\,n-1}
+
(n-2)\cdot \frac{c}{n-2}
\right].
\]
Substituting \(c=(1+L^2)L^{\,n-3}\), this becomes
\[
\lim_{R\to\infty} R^{2-n}J(R)
=
(n-1)\omega_{n-1}\Bigl[(1+L^2)L^{\,n-3}-2L^{\,n-1}\Bigr]
=
(n-1)(1-L^2)L^{\,n-3}\omega_{n-1}.
\]
Since \(0<L\le 1\), we have \((1-L^2)L^{\,n-3}\le 1\). Hence
\[
\limsup_{R\to\infty}R^{2-n}J(R)\le (n-1)\omega_{n-1}.
\]

\noindent\emph{Case 2: $L=0$ and $n>3$.}
Fix $\varepsilon>0$. Since $\psi'(s)\to 0$ and $\psi(s)/s\to 0$, there exists
$S_\varepsilon>0$ such that for all $s\ge S_\varepsilon$,
\[
0\le \psi'(s)\le \varepsilon,
\qquad
0\le \psi(s)\le \varepsilon s.
\]
Because the boundary term in \eqref{eq:global-conformal-J} is nonpositive, we may
discard it for an upper bound. Splitting the integral at $S_\varepsilon$, we obtain
\[
R^{2-n}J(R)
\le
R^{2-n}A_1
+
(n-1)\omega_{n-1}(n-2)R^{2-n}
\left[
\int_0^{S_\varepsilon}(1+(\psi')^2)\psi^{n-3}\,ds
+
\int_{S_\varepsilon}^R (1+\varepsilon^2)(\varepsilon s)^{n-3}\,ds
\right].
\]
As \(R\to\infty\), the terms involving \(A_1\) and the fixed integral over
\([0,S_\varepsilon]\) vanish. Therefore
\[
\limsup_{R\to\infty}R^{2-n}J(R)
\le
(n-1)\omega_{n-1}(1+\varepsilon^2)\varepsilon^{\,n-3}.
\]
Letting \(\varepsilon\downarrow0\), we conclude that
\[
\limsup_{R\to\infty}R^{2-n}J(R)=0.
\]

\noindent\emph{Case 3: $L=0$ and $n=3$.}
In this case \eqref{eq:global-conformal-J} becomes
\[
J(R)
=
A_1
+
2\omega_2\left[
-2\psi'(R)\psi(R)+\int_0^R (1+(\psi')^2)\,ds
\right].
\]
Again the boundary term is nonpositive. Given \(\varepsilon>0\), choose
\(S_\varepsilon>0\) such that
\[
0\le \psi'(s)\le \varepsilon
\qquad\text{for all }s\ge S_\varepsilon.
\]
Then
\[
\frac1R\int_0^R (1+(\psi')^2)\,ds
=
\frac1R\int_0^{S_\varepsilon}(1+(\psi')^2)\,ds
+
\frac1R\int_{S_\varepsilon}^R(1+(\psi')^2)\,ds
\le
\frac{C_\varepsilon}{R}+\frac{R-S_\varepsilon}{R}(1+\varepsilon^2),
\]
where
\[
C_\varepsilon:=\int_0^{S_\varepsilon}(1+(\psi')^2)\,ds.
\]
Dividing by \(R\) and letting \(R\to\infty\), we obtain
\[
\limsup_{R\to\infty}R^{-1}J(R)\le 2\omega_2(1+\varepsilon^2).
\]
Letting \(\varepsilon\downarrow0\), we get
\[
\limsup_{R\to\infty}R^{-1}J(R)\le 2\omega_2=(n-1)\omega_{n-1}.
\]

Combining the three cases, we conclude that
\begin{equation}\label{eq:global-conformal-rho}
\limsup_{R\to\infty}R^{2-n}J(R)\le (n-1)\omega_{n-1}.
\end{equation}

Finally, we pass from the exhaustion $\{\rho \le R\}$ to geodesic balls.
Fix $p\in\mathbb R^n$. Since distance to a closed set is $1$-Lipschitz,
\[
\rho(x)\le d_g(x,p)+\rho(p)
\qquad\text{for all }x\in\mathbb R^n.
\]
Hence $B_g(p,r)\subset \{ x \in \mathbb{R}^n \mid \rho(x) \le r + \rho(p) \}.$ Because $\operatorname{Ric}_g\ge 0$ implies $\operatorname{Sc}_g\ge 0$, we have
\[
\int_{B_g(p,r)} \operatorname{Sc}_g\,dV_g
\le J(r+\rho(p)).
\]
Multiplying by $r^{2-n}$ and using \eqref{eq:global-conformal-rho}, we obtain
\[
\begin{aligned}
\limsup_{r\to\infty}
r^{2-n}\int_{B_g(p,r)} \operatorname{Sc}_g\,dV_g
&\le
\limsup_{r\to\infty} r^{2-n}J(r+\rho(p)) \\
&=
\limsup_{r\to\infty}
\left(\frac{r+\rho(p)}{r}\right)^{n-2}
(r+\rho(p))^{2-n}J(r+\rho(p)) \\
&\le (n-1)\omega_{n-1}.
\end{aligned}
\]
\end{proof}

\begin{remark}
For $(\mathbb{R}^{3},\, g = (r^{2}+1)^{-2\alpha} g_{E})$ with $\alpha \in [0, \tfrac{1}{2}]$, one can show that
\[
\lim_{r \to \infty} r^{-1} \int_{B_g(p,r)} \mathrm{Sc}_g \, dV_g
= 32\pi \alpha(1-\alpha) \le 8\pi.
\]
Hence, no gap theorem holds in this setting.
\end{remark}

\begin{remark}
If $(\mathbb{R}^n, g = u^{\frac{4}{n-2}} g_{E})$ ($n \ge 3$) is scalar-flat for some $0 < u \in C^\infty(\mathbb{R}^n)$ , then $u$ is constant. Indeed, $\mathrm{Sc}_g = 0$ implies that $u$ is a positive harmonic function on all of $\mathbb{R}^n$. By the generalized Liouville theorem, any positive harmonic function on the entire Euclidean space $\mathbb{R}^n$ must be constant. Thus, $u$ is constant.
\end{remark}

\begin{remark}
Independently,  Ma \cite{2026arXiv260613979M} has recently obtained a sharp strengthening of Proposition~\ref{global conformal to flat}, without assuming that $u$ is radially symmetric, by using a refined singularity estimate for nonnegative $n$-superharmonic functions. 
\end{remark}

For a complete conformal metric
$(\mathbb{R}^n,g=u^{\frac{4}{n-2}}g_E)$ $(n\geq 3)$ with
$\mathrm{Ric}_g\geq 0$, Ma--Qing~\cite{zbMATH07420360} show that there
is an asymptotic constant $m\in[0,1]$, depending on $u$, such that if
$m=0$, then $u$ is constant. The following estimate is obtained by applying results of Ma--Qing~\cite{zbMATH07420360}.

\begin{proposition}\label{estimate}
Let $n\geq 3$, and let
$(\mathbb{R}^n,g=u^{\frac{4}{n-2}}g_E)$ $(u>0)$ be complete, smooth,
nonflat, and satisfy $\mathrm{Ric}_g\geq 0$. Then $u$ attains its global
maximum at some point $p_0\in\mathbb{R}^n$. Set
$M:=u(p_0)=\max_{\mathbb{R}^n}u$, and let $m$ be the asymptotic constant
associated with $u$.

\begin{enumerate}
\item[\rm(a)]
If $0<m<1$, then there exists $A_1>0$ such that, for every $r\geq 0$,
\begin{equation}\label{eq:scalar-bound-subcritical}
\int_{B_g(p_0,r)}\mathrm{Sc}_g\,dV_g
\leq
\frac{4(n-1)\omega_{n-1}}{1-2^{2-n}}\,
M^2A_1^{n-2}(1+r)^{\frac{n-2}{1-m}}.
\end{equation}

\item[\rm(b)]
If $m=1$, then there exist $A,B>0$ such that, for every $r\geq 0$,
\begin{equation}\label{eq:scalar-bound-critical}
\int_{B_g(p_0,r)}\mathrm{Sc}_g\,dV_g
\leq
\frac{4(n-1)\omega_{n-1}}{1-2^{2-n}}\,
M^2A^{n-2}e^{B(n-2)r}.
\end{equation}
\end{enumerate}
Here $\omega_{n-1}=|S^{n-1}|$ denotes the Euclidean area of the unit sphere
in $\mathbb{R}^n$.
\end{proposition}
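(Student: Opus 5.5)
The plan is to rewrite the scalar-curvature integral as a Euclidean integral of $u(-\Delta u)$, control it on Euclidean balls using superharmonicity and the bound $u\le M$, and then decide which Euclidean ball contains $B_g(p_0,r)$ using a lower bound on the conformal factor from Ma--Qing.

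\emph{Step 1: Euclidean form of the integral.} For $g=u^{\frac{4}{n-2}}g_E$ we have
\[
\mathrm{Sc}_g=-\tfrac{4(n-1)}{n-2}\,u^{-\frac{n+2}{n-2}}\Delta u,
\qquad
dV_g=u^{\frac{2n}{n-2}}dx .
\]
Hence, for any Borel set $\Omega$,
\[
\int_\Omega \mathrm{Sc}_g\,dV_g=\tfrac{4(n-1)}{n-2}\int_\Omega u\,(-\Delta u)\,dx .
\]
Since $\mathrm{Ric}_g\ge0$ gives $\mathrm{Sc}_g\ge0$, $u$ is a positive superharmonic function on $\mathbb R^n$.

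\emph{Step 2: existence of the maximum.} Ma--Qing show that $m>0$ in the nonflat case and that $u$ decays at infinity. A continuous positive function tending to $0$ at infinity attains its maximum, at some point $p_0$. Put $M=u(p_0)$ and translate coordinates so that $p_0=0$.

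\emph{Step 3: Euclidean bound.} Set $F(\rho):=\int_{B_E(0,\rho)}(-\Delta u)\,dx\ge0$, which is nondecreasing, and let $\bar u(\rho)$ be the spherical mean of $u$. The divergence theorem gives $-\rho^{n-1}\bar u'(\rho)=F(\rho)/\omega_{n-1}$. Integrating over $[R,2R]$ and using monotonicity of $F$,
\[
M\ge \bar u(R)-\bar u(2R)\ge \frac{F(R)}{\omega_{n-1}}\int_R^{2R}\rho^{1-n}\,d\rho=\frac{F(R)\,(1-2^{2-n})R^{2-n}}{(n-2)\,\omega_{n-1}}.
\]
Therefore $F(R)\le (n-2)\omega_{n-1}MR^{n-2}/(1-2^{2-n})$. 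Combining with $u\le M$ and Step~1,
\[
\int_{B_E(0,R)}\mathrm{Sc}_g\,dV_g\le \frac{4(n-1)}{n-2}\,M\,F(R)\le\frac{4(n-1)\omega_{n-1}}{1-2^{2-n}}M^2R^{n-2}.
\]
This already produces exactly the constant in the statement.

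\emph{Step 4: Euclidean reach of intrinsic balls (main obstacle).} It remains to show $B_g(0,r)\subset B_E(0,R(r))$ with $R(r)=A_1(1+r)^{1/(1-m)}$ when $0<m<1$, and $R(r)=Ae^{Br}$ when $m=1$. Substituting this $R$ into Step~3 gives \eqref{eq:scalar-bound-subcritical} and \eqref{eq:scalar-bound-critical}.
\begin{itemize}
\item[$\bullet$] For this I would use the Ma--Qing asymptotics to get a pointwise lower bound $u^{\frac{2}{n-2}}(y)\ge c(1+|y|)^{-m}$ on all of $\mathbb R^n$. On compact sets this follows from positivity, so only the behaviour at infinity matters.
\item[$\bullet$] Any curve from $0$ leaving $B_E(0,R)$ crosses each sphere $|y|=t$, $0<t<R$. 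So its $g$-length is at least $\int_0^R c(1+t)^{-m}\,dt$.
\item[$\bullet$] This integral is $\frac{c}{1-m}\bigl((1+R)^{1-m}-1\bigr)$ for $m<1$, and $c\log(1+R)$ for $m=1$. Setting it equal to $r$ and solving for $R$ gives the polynomial, respectively exponential, radius, after adjusting constants.
\end{itemize}
The delicate point is whether Ma--Qing provide the lower bound with the sharp exponent $m$ uniformly; this is where I expect the difficulty. Their result gives $\log u^{\frac{2}{n-2}}(x)/\log|x|\to -m$ only along generic directions. If the uniform bound turns out to need more than this, I would first try to derive it from their $n$-superharmonic characterization of $m$ (the mass of $-\Delta_n\log u^{\frac{2}{n-2}}$), combined with a Harnack-type lower bound on annuli.
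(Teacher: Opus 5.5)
Your proposal is correct and follows the paper's proof. It uses the same rewriting $\mathrm{Sc}_g\,dV_g=\tfrac{4(n-1)}{n-2}\,u(-\Delta_E u)\,dx$, the same mass bound $\int_{B_E(p_0,R)}(-\Delta_E u)\,dx\le\frac{(n-2)\omega_{n-1}}{1-2^{2-n}}MR^{n-2}$ (your spherical-mean integration over $[R,2R]$ is equivalent to the paper's Green's-identity computation on $B_E(p_0,2R)$ and gives the identical constant), and the same ball inclusion from integrating $u^{\frac{2}{n-2}}\ge c(1+|x-p_0|)^{-m}$ along curves. The uniform lower bound you flag as delicate is exactly what the paper cites as Ma--Qing's pointwise estimate \cite[(1.11)]{zbMATH07420360}; as I understand their asymptotics, the exceptional $n$-thin set consists of places where $\log u^{\frac{2}{n-2}}$ lies \emph{above} $m\log(1/|x|)$, so it should not threaten this lower bound.
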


The proof combines two inputs of Ma--Qing with a potential-theoretic mass
estimate for the scalar curvature. The asymptotic behavior of $u$ forces decay
at infinity and hence gives a global maximum $M=u(p_0)$. The lower bound for
the conformal factor $u^{\frac{2}{n-2}}$ converts intrinsic balls into Euclidean
balls whose radii grow polynomially, of order $(1+r)^{1/(1-m)}$, when
$0<m<1$, and exponentially, of order $e^{Br}$, when $m=1$. This is the source of the dichotomy in the argument.

\begin{proof}
Set $\phi:=\frac{2}{n-2}\log u$ and $\lambda:=e^\phi=u^{\frac{2}{n-2}}$, so that
$g=\lambda^2 g_E$. Since $g$ is nonflat, by the Ma--Qing liminf asymptotic,
\begin{equation}\label{eq:mq-liminf}
\liminf_{|x|\to\infty}\frac{\phi(x)}{\log(1/|x|)}=m>0.
\end{equation}
Fix $\eta\in(0,m)$.  From \eqref{eq:mq-liminf}, there exists $R_0>1$ such that
\[
\frac{\phi(x)}{\log(1/|x|)}>\eta
\qquad\text{whenever } |x|\ge R_0.
\]
  Thus
\[
\phi(x)<\eta\log(1/|x|)=-\eta\log |x|
\qquad\text{for } |x|\ge R_0.
\]
Therefore
\[
u(x)=\exp\!\left(\frac{n-2}{2}\phi(x)\right)
\le |x|^{-\eta(n-2)/2}\longrightarrow0
\qquad\text{as } |x|\to\infty.
\]
Since $u$ is positive and continuous, it attains a positive global maximum at
some point $p_0\in\mathbb{R}^n$.  We write $M:=u(p_0)=\max_{\mathbb{R}^n}u.$

The pointwise Ma--Qing lower bound \cite[(1.11)]{zbMATH07420360}  gives, after increasing the constant if
necessary,
\begin{equation}\label{eq:mq-lambda-lower-origin}
\lambda(x)=e^{\phi(x)}\ge c(1+|x|)^{-m}
\qquad\text{for all }x\in\mathbb{R}^n
\end{equation}
for some $c>0$.  Since
\[
1+|x|\le (1+|p_0|)(1+|x-p_0|),
\]
\eqref{eq:mq-lambda-lower-origin} implies 
\begin{equation}\label{eq:lambda-lower-at-p0}
 e^{\phi(x)}=u(x)^{\frac{2}{n-2}}
 \ge c_0(1+|x-p_0|)^{-m},
\end{equation} 
where  $c_0:=c(1+|p_0|)^{-m}>0.$  Let $\rho(y):=|y-p_0|$, and let $\gamma:[0,1]\to\mathbb{R}^n$ be any absolutely
continuous curve with $\gamma(0)=p_0$ and $\gamma(1)=x$.  Then $\rho\circ\gamma$
is absolutely continuous and satisfies
$| (\rho\circ\gamma)'(t)|\le |\gamma'(t)|_E$ for a.e. $t.$

Using \eqref{eq:lambda-lower-at-p0}, we get
\[
\begin{aligned}
L_g(\gamma)
&=\int_0^1\lambda(\gamma(t))|\gamma'(t)|_E\,d t \\
&\ge c_0\int_0^1(1+\rho(\gamma(t)))^{-m}|(\rho\circ\gamma)'(t)|\,d t.
\end{aligned}
\]
Define
\[
W(s):=\int_0^s(1+t)^{-m}\,d t.
\]
Since $W'(s)=(1+s)^{-m}$, the preceding inequality and the fundamental theorem
of calculus give
\[
L_g(\gamma)
\ge c_0\left|\int_0^1 \frac{d}{dt}W(\rho(\gamma(t)))\,d t\right|
= c_0 W(|x-p_0|).
\]
Taking the infimum over all such curves yields
\begin{equation}\label{eq:distance-lower-W}
d_g(p_0,x)\ge c_0W(|x-p_0|).
\end{equation}

If $0<m<1$, then
\[
W(s)=\frac{(1+s)^{1-m}-1}{1-m}.
\]
Thus $d_g(p_0,x)<r$ implies
\[
1+|x-p_0|<\left(1+\frac{1-m}{c_0}r\right)^{\frac{1}{1-m}}.
\]
Choosing $A_1>0$ large enough gives 

\begin{equation}\label{eq:ball-containment-subcritical}
B_g(p_0,r)
\subset
B_E\left(p_0,A_1(1+r)^{\frac{1}{1-m}}\right)
\end{equation}
 for
all $r\ge0$.  If $m=1$, then $W(s)=\log(1+s)$, and \eqref{eq:distance-lower-W}
implies
\[
|x-p_0|<e^{r/c_0}-1.
\]
Therefore, \begin{equation}\label{eq:ball-containment-critical}
B_g(p_0,r)
\subset
B_E(p_0,Ae^{Br}),
\end{equation} for example, with $A=1$ and
$B=1/c_0$.

The inner inclusion$$B_E\!\left(p_0,M^{-\frac{2}{n-2}}r\right) \subset B_g(p_0,r)$$follows from the opposite estimate. Indeed, by the global upper bound $u(x) \le M$, we have $\lambda(x)=u(x)^{\frac{2}{n-2}}\le M^{\frac{2}{n-2}}.$ The Euclidean line segment from $p_0$ to $x$ therefore gives $d_g(p_0,x)\le M^{\frac{2}{n-2}}|x-p_0|,$ which is equivalent to the inner inclusion $B_E\!\left(p_0,M^{-\frac{2}{n-2}}r\right) \subset B_g(p_0,r).$

The conformal scalar-curvature equation is
\begin{equation}\label{eq:conformal-scalar-equation}
-a_n\Delta_Eu=\mathrm{Sc}_g u^{\frac{n+2}{n-2}},
\end{equation}
where $a_n := \frac{4(n-1)}{n-2}$. Since $\mathrm{Ric}_g\ge0$, we have $\mathrm{Sc}_g\ge0$.  Hence
\[
\mu:=(-\Delta_Eu)\,d x
\]
is a nonnegative Radon measure.  Because $u\in C^\infty(\mathbb{R}^n)$, this measure is
absolutely continuous with respect to Lebesgue measure and has no atoms.  Since
$dV_g=u^{\frac{2n}{n-2}}\,d x$, equation \eqref{eq:conformal-scalar-equation}
gives
\begin{equation}\label{eq:scalar-measure-identity}
\mathrm{Sc}_g\,dV_g
=\mathrm{Sc}_gu^{\frac{2n}{n-2}}\,d x
=a_nu(-\Delta_Eu)\,d x
=a_nu\,d\mu.
\end{equation}

Let $R>0$ and put $D:=B_E(p_0,2R)$.  Let
\[
G_R(x):=\frac{1}{(n-2)\omega_{n-1}}
\left(|x-p_0|^{2-n}-(2R)^{2-n}\right),
\qquad x\in D\setminus\{p_0\}.
\]
Thus $G_R$ is the Green function of $D$ with pole at $p_0$ and zero boundary
value.  For $0<\varepsilon<R$, apply Green's second identity on
$D\setminus\overline{B_E(p_0,\varepsilon)}$:
\begin{equation}\label{eq:green-identity}
\int_{D\setminus B_E(p_0,\varepsilon)}G_R\,d\mu
=
\int_{\partial(D\setminus B_E(p_0,\varepsilon))}
\left(u\partial_\nu G_R-G_R\partial_\nu u\right)\,d S.
\end{equation}
On the outer boundary $\partial D$, one has $G_R=0$ and
\[
\partial_\nu G_R=-\frac{1}{\omega_{n-1}(2R)^{n-1}},
\]
so the outer contribution is
\[
-\frac{1}{\omega_{n-1}(2R)^{n-1}}
\int_{\partial B_E(p_0,2R)}u\,d S.
\]
On the inner boundary $\partial B_E(p_0,\varepsilon)$, the outward normal for
the punctured domain is $-\partial_r$.  Hence
\[
\partial_\nu G_R=\frac{1}{\omega_{n-1}\varepsilon^{n-1}},
\]
and therefore
\[
\int_{\partial B_E(p_0,\varepsilon)}u\partial_\nu G_R\,d S\longrightarrow u(p_0)=M.
\]
The remaining inner term tends to zero, since $G_R=O(\varepsilon^{2-n})$,
$|\nabla u|$ is bounded on $\overline{D}$, and
$|\partial B_E(p_0,\varepsilon)|=\omega_{n-1}\varepsilon^{n-1}$.
Finally, $G_R\ge0$, the punctured domains increase to $D\setminus\{p_0\}$, and
$\mu(\{p_0\})=0$; hence monotone convergence applies.  Letting
$\varepsilon\downarrow0$ in \eqref{eq:green-identity}, we obtain
\begin{equation}\label{eq:green-mu-bound}
\int_DG_R\,d\mu
=
M-\frac{1}{\omega_{n-1}(2R)^{n-1}}
\int_{\partial B_E(p_0,2R)}u\,d S
\le M.
\end{equation}
For $x\in B_E(p_0,R)$,
\[
G_R(x)\ge
\frac{R^{2-n}-(2R)^{2-n}}{(n-2)\omega_{n-1}}
=
\frac{1-2^{2-n}}{(n-2)\omega_{n-1}}R^{2-n}.
\]
Combining this lower bound with \eqref{eq:green-mu-bound} gives
\begin{equation}\label{eq:mu-ball-bound}
\mu(B_E(p_0,R))
\le
\frac{(n-2)\omega_{n-1}}{1-2^{2-n}}MR^{n-2}
\qquad\text{for all }R>0.
\end{equation}

By \eqref{eq:scalar-measure-identity} and $u(x)\le M$, one has
\begin{equation}\label{eq:scalar-by-mu-upper}
\int_{B_g(p_0,r)}\mathrm{Sc}_g\,dV_g
=a_n\int_{B_g(p_0,r)}u\,d\mu
\le a_nM\mu(B_g(p_0,r)).
\end{equation}

Assume first $0 < m < 1$. By \eqref{eq:ball-containment-subcritical} and the mass bound \eqref{eq:mu-ball-bound} with $R=A_1(1+r)^{1/(1-m)}$, the scalar curvature integral \eqref{eq:scalar-by-mu-upper} yields
\[
\int_{B_g(p_0,r)}\mathrm{Sc}_g\,dV_g
\le
\frac{4(n-1)\omega_{n-1}}{1-2^{2-n}}\,
M^2A_1^{n-2}(1+r)^{\frac{n-2}{1-m}}.
\]

If $m=1$, then \eqref{eq:ball-containment-critical}, \eqref{eq:mu-ball-bound},
and \eqref{eq:scalar-by-mu-upper} give
\[
\int_{B_g(p_0,r)}\mathrm{Sc}_g\,dV_g
\le
 a_nM\frac{(n-2)\omega_{n-1}}{1-2^{2-n}}
 M A^{n-2}e^{B(n-2)r}.
\]
\end{proof}

\subsection{Cohn--Vossen-type estimates from splitting}

In this subsection, we prove Cohn--Vossen-type estimates obtained from
splitting in the Riemannian universal cover, and then complete the proof of
Theorem~\ref{F}.

\begin{proposition}\label{codim2-splitting-Sc}
Let $(M^n,g)$ be a complete Riemannian manifold of dimension $n\ge 3$ with
$\mathrm{Ric}_g\ge 0$.
Assume that the universal cover $(\widetilde M,\widetilde g)$ splits
isometrically as
\[
(\widetilde M,\widetilde g)\cong (\Sigma^2,h)\times(\mathbb{R}^{n-2},g_E),
\]
with product metric $\widetilde g=h\oplus g_E$.
Then for every $p\in M$ and every $r>0$,
\[
r^{2-n}\int_{B_g(p,r)} \mathrm{Sc}_g\,dV_g \;\le\; 8\pi\,\omega_{n-2},
\]
where $\omega_{n-2}:=\mathrm{Vol}_{\mathbb{E}^{n-2}}(B_{\mathbb{E}^{n-2}}(0,1))$.
\end{proposition}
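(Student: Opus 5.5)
The plan is to lift everything to the universal cover, where the splitting turns the scalar curvature integral into a product of a two-dimensional total curvature, controlled by Cohn--Vossen, and a Euclidean volume in $\mathbb R^{n-2}$. Let $\pi:(\widetilde M,\widetilde g)\to(M,g)$ be the Riemannian covering and fix a lift $\tilde p$ of $p$.

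First I will reduce the integral over $B_g(p,r)$ to one over a lifted ball. Choose the Dirichlet fundamental domain
\[
\mathcal D:=\{\tilde x\in\widetilde M:\ d_{\widetilde g}(\tilde x,\tilde p)\le d_{\widetilde g}(\tilde x,\gamma\tilde p)\ \text{for all deck transformations }\gamma\}.
\]
Its boundary has measure zero, and $\pi$ restricted to its interior is injective onto a full-measure open subset of $M$. Every $x\in B_g(p,r)$ is joined to $p$ by a minimizing geodesic of length $<r$. This geodesic lifts to a path starting at some lift $\tilde x\in\mathcal D$ and ending at a lift of $p$, which is at distance $<r$ from $\tilde x$. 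The defining property of $\mathcal D$ then gives $d_{\widetilde g}(\tilde x,\tilde p)<r$. Since $\pi$ is a local isometry and $\mathrm{Sc}_{\widetilde g}=\mathrm{Sc}_g\circ\pi\ge0$, we get
\[
\int_{B_g(p,r)}\mathrm{Sc}_g\,dV_g\le\int_{B_{\widetilde g}(\tilde p,r)}\mathrm{Sc}_{\widetilde g}\,dV_{\widetilde g}.
\]

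Next I use the product structure. Write $\tilde p=(q,z)$ with $q\in\Sigma$ and $z\in\mathbb R^{n-2}$. The product distance dominates the distance in each factor, so
\[
B_{\widetilde g}(\tilde p,r)\subset\Sigma\times B_{\mathbb E^{n-2}}(z,r).
\]
Moreover $\mathrm{Sc}_{\widetilde g}=\mathrm{Sc}_h=2K_h$, where $K_h$ is the Gauss curvature of $h$. Since $\mathrm{Ric}_{\widetilde g}\ge0$ restricts on $T\Sigma$ to $\mathrm{Ric}_h=K_h h$, we have $K_h\ge0$. Fubini then gives
\[
\int_{B_{\widetilde g}(\tilde p,r)}\mathrm{Sc}_{\widetilde g}\,dV_{\widetilde g}\le 2\,\omega_{n-2}\,r^{n-2}\int_\Sigma K_h\,dA_h.
\]

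Finally I bound $\int_\Sigma K_h\,dA_h$. The surface $\Sigma$ is complete, because it is a factor of a complete product, and simply connected, because $\widetilde M$ is. There are two cases.
\begin{itemize}
\item If $\Sigma$ is compact, then $\Sigma\cong S^2$ and Gauss--Bonnet gives $\int_\Sigma K_h=4\pi$.
\item If $\Sigma$ is noncompact, then $\Sigma\cong\mathbb R^2$ is finitely connected. The integral of $K_h\ge0$ exists in $[0,\infty]$, and the classical Cohn--Vossen inequality gives $\int_\Sigma K_h\le2\pi\chi(\Sigma)=2\pi$.
\end{itemize}
In either case $2\int_\Sigma K_h\le8\pi$, which yields the claimed bound $8\pi\,\omega_{n-2}$ for every $r>0$. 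The case $S^2$ is where equality in the constant $8\pi$ comes from.

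The step needing the most care is the first reduction from $M$ to the cover. I have to make sure that the fundamental-domain argument really produces an inequality and does not overcount or miss parts of the ball. This is handled by the measure-zero boundary of $\mathcal D$ together with the nonnegativity of $\mathrm{Sc}$. A smaller point is that Cohn--Vossen applies only when the total curvature integral exists, and this is guaranteed here by $K_h\ge0$.
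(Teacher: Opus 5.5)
Your proposal is correct and follows the paper's proof. You pass to the universal cover, bound $\int_{B_g(p,r)}\mathrm{Sc}_g$ by the integral over $B_{\widetilde g}(\tilde p,r)$ using $\mathrm{Sc}\ge 0$, enlarge the lifted ball to $\Sigma\times B_{\mathbb E^{n-2}}(z,r)$ and apply Tonelli, and finish with Gauss--Bonnet on $S^2$ or Cohn--Vossen on $\mathbb R^2$.

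The only differences are technical. For the first comparison you use a Dirichlet fundamental domain, which relies on the standard facts that its boundary is null and that $\pi$ is injective on its interior. The paper instead proves $\pi(B_{\widetilde g}(\tilde p,r))=B_g(p,r)$ and counts sheets through a multiplicity $N_r\ge 1$, which needs no regularity of fundamental domains. Likewise, you get finiteness of $\int_\Sigma K_h$ from the extended form of Cohn--Vossen, where the paper cites a separate finiteness result.
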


\begin{proof}
Fix $p\in M$ and choose a lift $\widetilde p\in \pi^{-1}(p)$, where
$\pi:(\widetilde M,\widetilde g)\to (M,g)$ is the universal Riemannian covering.
Since $\pi$ is a local isometry, we have $\widetilde g=\pi^*g$ and hence
\[
\mathrm{Ric}_{\widetilde g}=\pi^*(\mathrm{Ric}_g)\ge 0,
\qquad
\mathrm{Sc}_{\widetilde g}=\pi^*(\mathrm{Sc}_g)\ge 0,
\qquad
dV_{\widetilde g}=\pi^*(dV_g).
\]

Fix $r>0$. Let $A:=B_g(p,r)$, choose a lift $\widetilde p\in \pi^{-1}(p)$, and set
$\widetilde A:=B_{\widetilde g}(\widetilde p,r).$
Then $\pi(\widetilde A)=A.$
Indeed, if $\widetilde x\in \widetilde A$, then for any piecewise smooth curve
$\widetilde\alpha$ joining $\widetilde p$ to $\widetilde x$ we have
$\mathrm{Length}_g(\pi\circ\widetilde\alpha)=\mathrm{Length}_{\widetilde g}(\widetilde\alpha)$
(since $\pi$ is a local isometry). Taking infima over such curves gives
\[
d_g(p,\pi(\widetilde x)) \le d_{\widetilde g}(\widetilde p,\widetilde x) < r,
\]
so $\pi(\widetilde A)\subset A$.

Conversely, if $q\in A$, then $d_g(p,q)<r$. By Hopf--Rinow (completeness of $M$),
there exists a minimizing geodesic $\gamma$ from $p$ to $q$ of length
$\mathrm{Length}_g(\gamma)=d_g(p,q)<r$.
Lift $\gamma$ to a geodesic $\widetilde\gamma$ in $\widetilde M$ with
$\widetilde\gamma(0)=\widetilde p$ and set $\widetilde q:=\widetilde\gamma(1)$.
Then $\pi(\widetilde q)=q$ and
\[
d_{\widetilde g}(\widetilde p,\widetilde q)\le \mathrm{Length}_{\widetilde g}(\widetilde\gamma)
=\mathrm{Length}_g(\gamma)=d_g(p,q)<r,
\]
so $\widetilde q\in \widetilde A$, hence $q\in \pi(\widetilde A)$.
Thus, $\pi(\widetilde A)=A.$

Define the multiplicity function
\[
N_r(q):=\#\bigl(\pi^{-1}(q)\cap \widetilde A\bigr),
\qquad q\in A.
\]
Since $\pi(\widetilde A)=A$, $N_r(q)\ge 1$ for all $q\in A$.
Moreover, $N_r(q)<\infty$ for each $q\in A$. Since each fiber $\pi^{-1}(q)$ is discrete, and by the Hopf--Rinow theorem the closed ball
$\overline{B_{\widetilde g}(\widetilde p,r)}$ is compact, it follows that the intersection
$\pi^{-1}(q)\cap \overline{B_{\widetilde g}(\widetilde p,r)}$ is finite. Indeed, any discrete subset of a compact set must consist of finitely many points.

\begin{lemma}\label{coarea}
For any nonnegative measurable function $f$ on $A$, one has
\[
\int_{\widetilde A} (f\circ \pi)\, dV_{\widetilde g}
=
\int_A N_r(q)\, f(q)\, dV_g(q).
\]
\end{lemma}
\begin{proof}
Since $\overline A$ is compact, it can be covered by finitely many open sets
$\{U_i\}_{i=1}^k$ that are evenly covered by the covering map $\pi$.
Let $\{\rho_i\}_{i=1}^k$ be a smooth partition of unity subordinate to this cover,
so that $\mathrm{supp}(\rho_i)\subset U_i$ and $\sum_{i=1}^k \rho_i =1$ on $A$.

We decompose the integral over $\widetilde A$ as follows:
\[
\int_{\widetilde A} (f\circ \pi)\, dV_{\widetilde g}
=
\int_{\widetilde A} \Bigl(\sum_{i=1}^k (\rho_i\circ \pi)\Bigr)(f\circ \pi)\, dV_{\widetilde g}
=
\sum_{i=1}^k \int_{\widetilde A} ((\rho_i f)\circ \pi)\, dV_{\widetilde g}.
\]

Fix an index $i$. Since $U_i$ is evenly covered, its preimage
$\pi^{-1}(U_i)$ is a disjoint union of open sets $\{S_{i,\alpha}\}_\alpha$,
each of which is mapped isometrically onto $U_i$ by $\pi$.
Because $\rho_i$ is supported in $U_i$, the integrand vanishes outside
$\bigcup_\alpha S_{i,\alpha}$, and therefore
\[
\int_{\widetilde A} ((\rho_i f)\circ \pi)\, dV_{\widetilde g}
=
\sum_\alpha
\int_{S_{i,\alpha}\cap \widetilde A} ((\rho_i f)\circ \pi)\, dV_{\widetilde g}.
\]

On each sheet $S_{i,\alpha}$, the map $\pi$ is an isometry, so by change of variables,
\[
\int_{S_{i,\alpha}\cap \widetilde A} ((\rho_i f)\circ \pi)\, dV_{\widetilde g}
=
\int_{\pi(S_{i,\alpha}\cap \widetilde A)} \rho_i(q) f(q)\, dV_g(q).
\]

For any point $q\in U_i$, the number of sheets $S_{i,\alpha}$ intersecting
$\widetilde A$ and projecting to $q$ is exactly the multiplicity $N_r(q)$.
Summing over $\alpha$ yields
\[
\sum_\alpha
\int_{\pi(S_{i,\alpha}\cap \widetilde A)} \rho_i f
=
\int_{U_i} N_r(q)\, \rho_i(q)\, f(q)\, dV_g(q).
\]

Finally, summing over $i$ and using $\sum_i \rho_i=1$ on $A$, we obtain
\[
\sum_{i=1}^k \int_{U_i} N_r\, \rho_i f\, dV_g
=
\int_A N_r(q)\, f(q)\, dV_g(q),
\]
which completes the proof.
\end{proof}

Apply Lemma \ref{coarea} with $f=\mathrm{Sc}_g\ge 0$ and use
$\mathrm{Sc}_{\widetilde g}=\mathrm{Sc}_g\circ \pi$ to get
\begin{equation}\label{eq:pushdown-fixed}
\int_{\widetilde A} \mathrm{Sc}_{\widetilde g}\,dV_{\widetilde g}
=
\int_A N_r\,\mathrm{Sc}_g\,dV_g
\;\ge\;
\int_A \mathrm{Sc}_g\,dV_g.
\end{equation}

Write $k:=n-2$ and identify $(\widetilde M,\widetilde g)=(\Sigma,h)\times(\mathbb R^k,g_E).$ For a Riemannian product, the Ricci tensor splits:
$\mathrm{Ric}_{\widetilde g} = (\mathrm{Ric}_h)\oplus 0.$
Since $\mathrm{Ric}_{\widetilde g}\ge 0$, it follows that $\mathrm{Ric}_h\ge 0$.
In dimension $2$, $\mathrm{Ric}_h = K_h\,h$, where $K_h$ is the Gauss curvature,
so $K_h\ge 0$ and $\mathrm{Sc}_h = 2K_h\ge 0$.
Moreover, scalar curvature is additive under products, hence
\[
\mathrm{Sc}_{\widetilde g} = \mathrm{Sc}_h + \mathrm{Sc}_{g_E}
=\mathrm{Sc}_h,
\qquad
dV_{\widetilde g}=dA_h\,dz.
\]
Since $(\widetilde M,\widetilde g)$ is complete  and a Riemannian
product is complete if and only if  each factor is complete,  $(\Sigma,h)$ is complete. 
 As $\widetilde M$ is simply connected, $\Sigma$ is simply connected. By the classification of simply connected
surfaces, $\Sigma\cong S^2$ if compact and $\Sigma\cong \mathbb R^2$ if noncompact.

If $\Sigma$ is compact, then $\Sigma\cong S^2$ and Gauss--Bonnet gives
\[
\int_\Sigma K_h\,dA_h = 2\pi\,\chi(\Sigma)=2\pi\cdot 2=4\pi,
\]
hence $\int_\Sigma \mathrm{Sc}_h\,dA_h = 2\int_\Sigma K_h\,dA_h = 8\pi$.

If $\Sigma$ is noncompact then $\Sigma\cong \mathbb R^2$ and $K_h\ge 0$. By \cite[Theorem~2.1]{2016arXiv160907631C}, the noncompact surface $(\Sigma,h)$ has finite total curvature. Consequently, the Cohn--Vossen inequality yields
\[
\int_{\Sigma} K_h\,dA_h \le 2\pi\,\chi(\Sigma)=2\pi.
\]
Therefore,
\[
\int_\Sigma \mathrm{Sc}_h\,dA_h = 2\int_\Sigma K_h\,dA_h \le 4\pi.
\]

Using the product identification, write $\widetilde p=(p_0,z_0)\in \Sigma\times\mathbb R^k$.
Composing with the isometry $(x,z)\mapsto (x,z-z_0)$ of the Euclidean factor,
we may assume $\widetilde p=(p_0,0)$. For the product metric, one has the distance formula
\begin{equation*}
d_{\widetilde g}\bigl((x,z),(p_0,0)\bigr)^2
=
d_h(x,p_0)^2 + |z|^2.
\end{equation*}
Consequently,
\[
\widetilde A
=
\bigl\{(x,z): |z|<r,\ d_h(x,p_0)<\sqrt{r^2-|z|^2}\bigr\}.
\]

Since $\mathrm{Sc}_{\widetilde g}=\mathrm{Sc}_h\ge 0$, we may apply Tonelli's
theorem to integrate over the product:
\begin{align*}
\int_{\widetilde A}\mathrm{Sc}_{\widetilde g}\,dV_{\widetilde g}
&=
\int_{|z|<r}\left(\int_{B_h\bigl(p_0,\sqrt{r^2-|z|^2}\bigr)} \mathrm{Sc}_h\,dA_h\right)dz \\
&\le
\int_{|z|<r}\left(\int_{\Sigma}\mathrm{Sc}_h\,dA_h\right)dz \\
&=
\mathrm{Vol}_{\mathbb R^k}(B_{\mathbb R^k}(0,r)) \int_{\Sigma}\mathrm{Sc}_h\,dA_h \\
&=
\omega_k\,r^k \int_{\Sigma}\mathrm{Sc}_h\,dA_h.
\end{align*}
Since $k=n-2$, this gives
\begin{equation}\label{eq:tilde-ball-bound}
\int_{B_{\widetilde g}(\widetilde p,r)}\mathrm{Sc}_{\widetilde g}\,dV_{\widetilde g}
\le 8\pi\,\omega_{n-2}\,r^{n-2}.
\end{equation}

Combine \eqref{eq:pushdown-fixed} with \eqref{eq:tilde-ball-bound} to obtain
\[
\int_{B_g(p,r)} \mathrm{Sc}_g\,dV_g
\le
\int_{B_{\widetilde g}(\widetilde p,r)}\mathrm{Sc}_{\widetilde g}\,dV_{\widetilde g}
\le
8\pi\,\omega_{n-2}\,r^{n-2}.
\]
\end{proof}

The argument in the proof of Proposition~\ref{codim2-splitting-Sc} also applies to the case of codimension~$1$ splitting in the Riemannian universal cover.

\begin{proposition}\label{cyl_scalar_integral}
Let $(M^n,g)$ be a complete Riemannian manifold of dimension $n \ge 3$.
Assume that $M$ is locally isometric to the unit cylinder
\[
\bigl(\mathbb{R} \times S^{n-1},\, g_{\mathrm{cyl}} := dt^2 + g_{st}\bigr).
\]
 For every point $p \in M$ and every $r>0$, the following inequality holds:
\[ 
\int_{B_g(p,r)} \mathrm{Sc}_g \, dV_g \leq
2(n-1)(n-2)\,\omega_{n-1}\, r,
\]
where $\omega_{n-1}:=\mathrm{Vol}_{g_{st}}(S^{n-1})$. 
\end{proposition}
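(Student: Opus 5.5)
The plan is to mirror the proof of Proposition~\ref{codim2-splitting-Sc}, with the $\mathbb R$-factor now playing the role of the Euclidean factor. Since $M$ is complete and locally isometric to $(\mathbb R\times S^{n-1},g_{\mathrm{cyl}})$, its Riemannian universal cover $(\widetilde M,\widetilde g)$ is a complete, simply connected manifold locally isometric to the cylinder. Because $n\ge 3$, the cylinder is itself simply connected. By the standard rigidity for complete simply connected manifolds locally isometric to a fixed complete simply connected model (via developing maps or Cartan--Ambrose--Hicks), we get $(\widetilde M,\widetilde g)\cong(\mathbb R\times S^{n-1},dt^2+g_{st})$. In particular the scalar curvature is the constant $\mathrm{Sc}_g\equiv\mathrm{Sc}_{g_{st}}=(n-1)(n-2)$, which is nonnegative, so the multiplicity argument applies.

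Next, fix $p\in M$, choose a lift $\widetilde p$, and set $\widetilde A:=B_{\widetilde g}(\widetilde p,r)$ and $A:=B_g(p,r)$. Exactly as in the proof of Proposition~\ref{codim2-splitting-Sc}, we have $\pi(\widetilde A)=A$ and the multiplicity $N_r\ge1$. Lemma~\ref{coarea} applied with $f=\mathrm{Sc}_g\ge0$ then gives
\[
\int_A\mathrm{Sc}_g\,dV_g\le\int_{\widetilde A}\mathrm{Sc}_{\widetilde g}\,dV_{\widetilde g}.
\]
Write $\widetilde p=(t_0,\theta_0)$ and translate so that $t_0=0$. For the product metric, $d_{\widetilde g}((t,\theta),(0,\theta_0))^2=t^2+d_{st}(\theta,\theta_0)^2$, hence $\widetilde A\subset(-r,r)\times S^{n-1}$. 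By Tonelli,
\[
\int_{\widetilde A}\mathrm{Sc}_{\widetilde g}\,dV_{\widetilde g}\le\int_{-r}^{r}\int_{S^{n-1}}(n-1)(n-2)\,d\mu_{g_{st}}\,dt=2(n-1)(n-2)\,\omega_{n-1}\,r,
\]
which is the claimed bound.

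The only step needing care is the identification of the universal cover with the cylinder, that is, ruling out a nontrivial simply connected manifold merely locally isometric to the cylinder. I would handle it by noting that completeness plus local isometry make the developing map $\widetilde M\to\mathbb R\times S^{n-1}$ a Riemannian covering. Since the target is simply connected, this covering is an isometry. Alternatively, one can avoid this step entirely: in the local product structure, the parallel unit vector field $\partial_t$ lifts to $\widetilde M$, and de Rham decomposition gives $\widetilde M\cong\mathbb R\times N$ with $N$ a complete, simply connected space of constant curvature $1$, so $N\cong S^{n-1}$. The remaining computations are routine.
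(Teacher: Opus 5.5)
Your proposal is correct and follows essentially the same route as the paper. Both arguments identify the universal cover with the cylinder, use $\mathrm{Sc}_g\equiv(n-1)(n-2)$, push the integral down with the multiplicity Lemma~\ref{coarea}, and conclude from $B_{\widetilde g}(\widetilde p,r)\subset(t_0-r,t_0+r)\times S^{n-1}$. The only cosmetic difference is that the paper factors out the constant scalar curvature and applies the lemma to volume, whereas you apply it directly to $\mathrm{Sc}_g$; since $\mathrm{Sc}_g$ is constant, this is equivalent.
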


\begin{proof}
Since $n \ge 3$, the cylinder
$(\mathbb{R} \times S^{n-1},\, g_{\mathrm{cyl}})$ is simply connected and complete.
Because a complete Riemannian manifold that is locally isometric to a complete,
simply connected model has its Riemannian universal cover globally isometric to
that model, it follows that there exists a Riemannian covering map
\[
\pi : (\mathbb{R} \times S^{n-1}, g_{\mathrm{cyl}})
\longrightarrow (M,g).
\]
Since $\pi$ is a local isometry, the scalar curvature is preserved.
Therefore,
\[
\mathrm{Sc}_g \equiv \mathrm{Sc}_{g_{\mathrm{cyl}}}
= (n-1)(n-2)
=: S_0
\quad \text{on } M.
\]
Consequently,
\[
\int_{B_g(p,r)} \mathrm{Sc}_g \, dV_g
= S_0 \, \mathrm{Vol}_g\bigl(B_g(p,r)\bigr).
\]

Fix a lift $\tilde p \in \mathbb{R} \times S^{n-1}$ such that $\pi(\tilde p)=p$.
Let
\[
\widetilde A := B_{g_{\mathrm{cyl}}}(\tilde p,r),
\qquad
A := B_g(p,r).
\]
As in the proof of Proposition~\ref{codim2-splitting-Sc}, we have
$\pi(\widetilde A)=A.$ Define the multiplicity function
\[
N_r(q)
:= \#\bigl(\pi^{-1}(q)\cap \widetilde A\bigr),
\qquad q\in A.
\]
Then $N_r(q)\ge 1$ for all $q\in A$, since $A=\pi(\widetilde A)$.
Moreover, $N_r(q)<\infty$ for every $q\in A$, because $\pi^{-1}(q)$ is discrete
and $\overline{\widetilde A}$ is compact by the Hopf--Rinow theorem.

Consequently, using Lemma \ref{coarea}, we obtain
\[
\mathrm{Vol}_{g_{\mathrm{cyl}}}(\widetilde A)
=
\int_A N_r(q)\, dV_g(q)
\;\ge\;
\int_A 1\, dV_g
=
\mathrm{Vol}_g(A).
\]

For any point $\tilde{p} = (t_0, x_0)$ and $r > 0$, the squared distance in the product metric $\tilde{g}$ is given by $$d_{\tilde{g}}\bigl((t, x), (t_0, x_0)\bigr)^2 = (t - t_0)^2 + d_{g_{st}}(x, x_0)^2.$$The condition for a point $(t, x)$ to lie within the geodesic ball $B_{\tilde{g}}(\tilde{p}, r)$ is thus$$(t - t_0)^2 + d_{S^{n-1}}(x, x_0)^2 < r^2.$$Since $d_{g_{st}}(x, x_0)^2 \geq 0$, it follows that $(t - t_0)^2 < r^2$, yielding the containment$$B_{\tilde{g}}(\tilde{p}, r) \subset (t_0 - r, t_0 + r) \times S^{n-1}.$$
Thus,
\[
\mathrm{Vol}_g(B_g(p,r)) \leq \mathrm{Vol}_{g_{\mathrm{cyl}}}\bigl(B_{g_{\mathrm{cyl}}}(\tilde p,r)\bigr)
\le 2r\,\mathrm{Vol}_{g_{st}}(S^{n-1}),
\]
which proves \[ 
\int_{B_g(p,r)} \mathrm{Sc}_g \, dV_g \leq
2(n-1)(n-2)\,\mathrm{Vol}_{g_{st}}(S^{n-1})\, r.
\]
\end{proof}

\begin{proof}[Proof of Theorem \ref{F}]
The assumption in part~(i) implies that the Riemannian universal cover of $(M^n,g)$
splits off an isometric factor $\mathbb{R}^{n-2}$; see \cite[Theorem~1.2]{2026arXiv260114231C}.
Consequently, Proposition~\ref{codim2-splitting-Sc} applies.

Assume that $(M^n,g)$ is LCF. We consider the Zhu–Carron–Herzlich classification case by case. If $(M^n,g)$ is globally conformally equivalent to a space form of positive
sectional curvature, then $M$ is compact, which contradicts the assumption that
$M$ is noncompact.  If $(M^n,g)$ is flat, the conclusion is immediate. If instead $(M^n,g)$ is globally conformal to the flat metric on $\mathbb{R}^n$,
then the conclusion follows from Proposition~\ref{estimate}.
If $(M^n,g)$ is locally isometric to the unit cylinder
$\bigl(\mathbb{R}\times S^{n-1},\, g_{\mathrm{cyl}}:=dt^2+g_{st}\bigr),$
then the desired conclusion follows directly from
Proposition~\ref{cyl_scalar_integral}.
This exhausts all cases in the classification and completes the proof of
part~(ii).
\end{proof}

\section{Cohn--Vossen-Type Inequalities  in Dimension 3}\label{dim 3}
 Gromov~\cite[2.B.]{zbMATH03969577} sketches a proof of the following result: if a Riemannian manifold $(M^n,g)$ has sectional curvature $K_g \ge 0$ and scalar curvature $\mathrm{Sc}_g \ge 1$, then there exists a constant $C(n)>0$ such that $\operatorname{Vol}_g(B_g(p,r)) \le C(n)\, r^{\,n-2}.$
He further conjectures~\cite[2.A.(b)]{zbMATH03969577} that the same conclusion should hold when the assumption of nonnegative sectional curvature is relaxed to nonnegative Ricci curvature. In other words, Gromov’s conjecture states that if a complete connected Riemannian manifold $(M^n,g)$ satisfies $\mathrm{Ric}_g \ge 0$ and $\mathrm{Sc}_g \ge \sigma^2 > 0$, then there exists a constant $C = C(n,\sigma)$ such that
$\sup_{p \in M} \operatorname{Vol}_g\big(B_g(p,r)\big) \le C(n,\sigma)\, r^{\,n-2}$
for all  $r>0$.

In dimension~$3$, by applying the Schoen--Yau formula for minimal surfaces to level sets of harmonic functions, Munteanu and Wang verify Gromov’s conjecture in \cite[Theorem~1.2]{2022arXiv220105595M}.   

By a result of Schoen--Yau--Liu~\cite{zbMATH06210494}, if $(M^3,g)$ is a complete noncompact manifold with nonnegative Ricci curvature, then either $M^3$ is diffeomorphic to $\mathbb{R}^3$, or its Riemannian universal cover $\widetilde{M}^3$ is isometric to a Riemannian product $\Sigma^2 \times \mathbb{R}$, where $\Sigma^2$ is a complete $2$-manifold with nonnegative sectional curvature.

\begin{remark}

Proposition~\ref{codim2-splitting-Sc} establishes Gromov's conjecture both in the latter case of the Schoen--Yau--Liu theorem for $n=3$, and  in the general case for $n\geq 3$ under the assumption that $\pi_1(M)$ contains a subgroup isomorphic to $\mathbb{Z}^{n-2}$. Therefore, when addressing Yau's question and Gromov's conjecture in dimension $3$, it suffices to restrict attention to manifolds diffeomorphic to $\mathbb{R}^3$.
\end{remark}

Set
\[
\mathcal{C}(g):=\limsup_{r \to \infty} \frac{1}{r} \int_{B_g(p,r)} \mathrm{Sc}_g \, dV_g .
\]
This quantity is independent of the choice of the base point \(p\), and hence is well defined. Moreover, it is invariant under scaling of the metric.

Recently,  Xu~\cite{MR4802637} shows that Yau's question admits a positive answer for a $3$-manifold $(M^3,g)$ that admits a pole. On the other hand, Munteanu and Wang~\cite{2025arXiv250510520M} confirm Yau's question under the additional assumption that the scalar curvature $\mathrm{Sc}_g$ is bounded between two positive constants. More precisely, both works show that $\mathcal{C}(g) \le 8\pi.$ 
This leads Munteanu and Wang \cite[Conjecture~1.3]{2025arXiv250510520M} to propose the following conjecture: 

\begin{quote}
\textit{Munteanu--Wang’s conjecture.} 
Let $(M^3,g)$ be a three-dimensional complete Riemannian manifold with $\mathrm{Ric}_g \ge 0$ and exactly one end. Then its scalar curvature satisfies $\mathcal{C}(g)\le 8\pi$.
\end{quote}

Since the product metric \(g_{st} \oplus dt^{2}\) on \(S^{2}\times \mathbb{R}\) is complete and has nonnegative Ricci curvature with \(\mathcal{C}(g)=16\pi\), this example shows that the assumption that \(M^3\) has exactly one end in the conjecture of Munteanu--Wang is necessary.

 Inspired by the result of Xu~\cite{MR4802637}, Chen, Xu, and Zhang~\cite[Conjecture~1.7]{2026arXiv260210393C} proposed a related conjecture in which the assumption of exactly one end is replaced by Euclidean volume growth. They conjectured that $\mathcal{C}(g)= 8\pi\bigl[1-\mathrm{AVR}(g)\bigr].$

Since on a complete \(3\)-manifold \((M^3,g)\) with \(\mathrm{Ric}_g \ge 0\), the condition \(\mathrm{AVR}(g)>0\) implies that \(M\) has exactly one end, it follows that \(M^3\) is diffeomorphic to \(\mathbb{R}^3\). Thus, the conjecture of Chen, Xu, and Zhang can be viewed as a refinement of the conjecture of Munteanu and Wang.

For Zhu’s example $(\mathbb{R}^{3},\, g = (r^{2}+1)^{-2\alpha} g_{E})$, where 
$r^{2} = \sum_{i=1}^{3} x_{i}^{2}$ with $\alpha \in \left[0, \tfrac{1}{2}\right]$,
a direct computation shows that $\mathcal{C}(g)=32\pi \alpha (1-\alpha)\le 8\pi .$
This supports the two conjectures mentioned above. We next give a partial result toward Munteanu-Wang's conjecture and Yau's problem.

\begin{theorem}\label{3D Cohn}
Let $(M^3,g)$ be a complete smooth Riemannian $3$-manifold with $\mathrm{Ric}_g \ge 0$.
\begin{enumerate}
\item[I.] Fix $p\in M$ and define $\rho(x):=d_g(p,x).$
Assume that there exist constants $C>0$ and $R_0>0$ such that
\[
\mathrm{Sc}_g(x)\le \frac{C}{\rho(x)^2}
\qquad \text{for all } x\in M \text{ with } \rho(x)\ge R_0 .
\]
 Then $\mathcal{C}(g)\leq 8\pi.$

\item[II.] Suppose there exists a compact domain with smooth nonempty boundary $\Omega\Subset M^3$ such that $\Omega=\overline{\operatorname{Int}(\Omega)},$
and the set $E:=M^3\setminus \operatorname{Int}\Omega$
is connected. Let $E^\circ:=M^3\setminus \Omega$. Assume further that there exists a proper function $f:E\to[0,\infty)$
such that
\[
f\in C^0(E)\cap C^\infty(E^\circ), \qquad f^{-1}(0)=\partial E, \qquad |\nabla f|_g=1 \ \text{on } E^\circ .
\]
Then $\mathcal{C}(g)$ is finite.

\end{enumerate}

\end{theorem}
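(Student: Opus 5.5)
For Part~I, I will first reduce to two regimes according to the volume growth. Since $\mathrm{Ric}_g\ge0$ in dimension three, $|\mathrm{Ric}_g|\le \mathrm{Sc}_g$ pointwise. The hypothesis therefore gives quadratic Ricci decay $|\mathrm{Ric}_g|\le C\rho^{-2}$ for $\rho\ge R_0$.

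\emph{Reduction.} By Reiris's result~\cite[Corollary~1.1]{zbMATH06423474}, such a manifold either has Euclidean volume growth ($\mathrm{AVR}(g)>0$) or has volume growth $\operatorname{Vol}_g(B_g(p,r))=o(r^{2})$. In fact I expect at most linear-type growth in the collapsed case.

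\emph{Collapsed case.} Here I use the coarea formula with the curvature decay. On the annulus $\{2^{k}\le\rho<2^{k+1}\}$, the bound $\mathrm{Sc}_g\le C\rho^{-2}$ together with $\operatorname{Vol}_g(B_g(p,2^{k+1}))\le \varepsilon_k 2^{2k}$, where $\varepsilon_k\to0$, gives
\[
\int_{\{2^k\le\rho<2^{k+1}\}}\mathrm{Sc}_g\le C\varepsilon_k'.
\]
I then need summability against $r$. If Reiris only yields $o(r^2)$, I will instead estimate $\int_{R_0}^r s^{-2}\,\tfrac{d}{ds}\operatorname{Vol}_g(B_g(p,s))\,ds$ by integrating by parts. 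This gives
\[
r^{-2}\operatorname{Vol}_g(B_g(p,r))+2\int^{r} s^{-3}\operatorname{Vol}_g(B_g(p,s))\,ds,
\]
which is $o(r)$ whenever $\operatorname{Vol}_g(B_g(p,s))=o(s^{2})$. Hence $\mathcal C(g)=0$ in this case.

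\emph{Euclidean volume growth.} When $\mathrm{AVR}(g)>0$, the manifold is nonparabolic, so the minimal positive Green function $G$ exists. Set $b=G^{-1}$ and consider the exhaustion $\{b<t\}$. Xu's weighted estimate~\cite[Theorem~1.2]{zbMATH07177335} bounds an integral of the form
\[
\int_{\{b<t\}}\mathrm{Sc}_g\,|\nabla b|^{\,?}\cdots
\]
by $8\pi t$ plus lower order terms; this is essentially a Munteanu--Wang type monotonicity obtained from Gauss--Bonnet on level sets of $b$, each of which is a sphere since $M\cong\mathbb R^3$. To remove the weight I will use Colding's gradient estimates in the Euclidean-volume-growth setting. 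Asymptotically $|\nabla b|\to$ const and $b\sim c\rho$ on cones, with the constants normalized by $\mathrm{AVR}$.

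I then apply a dyadic de-weighting lemma. On each dyadic annulus, the quadratic decay makes the contribution of the region where the weight deviates from its limit negligible, because that region's volume is $o(r^3)$ while $\mathrm{Sc}_g\lesssim r^{-2}$ there. Finally, I compare $\{b<t\}$ with $B_g(p,r)$ using $b/\rho\to$ const uniformly. After normalization, the constants should cancel and leave $\limsup r^{-1}\int_{B_g(p,r)}\mathrm{Sc}_g\le 8\pi$.

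\emph{Main obstacle.} The crux is the de-weighting step, that is, matching the normalization constant so the bound is exactly $8\pi$ and not $8\pi$ times a power of $\mathrm{AVR}$.

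\emph{Part~II.} The level sets $\Sigma_t=f^{-1}(t)$ are compact hypersurfaces, and $\nabla f$ generates unit-speed geodesics, so $E^\circ\cong(0,\infty)\times\Sigma$ with $g=dt^2+g_t$. Let $J(t,x)$ be the area Jacobian. The Riccati equation $H'+|A|^2+\mathrm{Ric}(\nu,\nu)=0$ together with $\mathrm{Ric}\ge0$ makes $J^{1/2}$ concave in $t$. Hence $|\Sigma_t|\le C(1+t)^2$, and $H\le 2/t$ eventually where $H>0$. By the Gauss equation,
\[
\mathrm{Sc}_g=2K_{\Sigma_t}+2\mathrm{Ric}(\nu,\nu)+|A|^2-H^2.
\]
Since $\mathrm{Ric}(\nu,\nu)=-H'-|A|^2$, this becomes
\[
\mathrm{Sc}_g=2K-2H'-|A|^2-H^2\le 2K-2H'-\tfrac32 H^2 .
\]
Integrating over the shell $\{a\le f\le b\}$ with the coarea formula ($|\nabla f|=1$) and using Gauss--Bonnet, $\int_{\Sigma_t}K=2\pi\chi(\Sigma_t)\le 4\pi\cdot\#\text{components}$. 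The number of components is bounded since the level sets vary by diffeomorphism, so this term contributes $O(b-a)$. For the term $-2\int H'\,dA$, I write $\partial_t(H\,dA)=(H'+H^2)\,dA$ and integrate by parts in $t$, which produces boundary terms $\int_{\Sigma_t}H$ and a term $+2\int H^2\,dA$. Combined with $-\tfrac32H^2$, this leaves $\tfrac12\int H^2$, and I control it using $H\le 2/t$ and the area bound $O(t^2)$, giving $O(1)$ per unit $t$. The boundary terms $\int_{\Sigma_t}H=\partial_t|\Sigma_t|=O(t)$ follow from concavity. This gives $\int_{\{f\le R\}}\mathrm{Sc}_g=O(R)$. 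Since $|f(x)-d_g(p,x)|$ is bounded, $B_g(p,r)\subset\{f\le r+c\}$, and therefore $\mathcal C(g)<\infty$. The delicate point in Part~II is handling the negative part of $H$ in the $H^2$ bound; I will use the concavity to bound $|H|$ by $C/t$ after a fixed time.
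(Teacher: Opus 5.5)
Your outline is correct in substance, but one claim in Part~I is false and should be dropped. Reiris's theorem does not give $\operatorname{Vol}_g(B_g(p,r))=o(r^2)$ in the collapsed case. Flat $\mathbb R^2\times S^1$, or $\Sigma\times S^1$ with $\Sigma$ a capped cone of angle $<2\pi$ (so $\mathrm{Ric}_g\ge0$ is compactly supported), satisfy every hypothesis of Part~I and have $\operatorname{Vol}_g(B_g(p,r))\sim c\,r^2$. The error is harmless. Your own integration-by-parts expression $r^{-2}V(r)+2\int_{R_0}^r s^{-3}V(s)\,ds$ is already $o(r)$ as soon as $V(s)/s^3\to0$, since the second term is a Ces\`aro average. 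That is exactly the condition $\mathrm{AVR}(g)=0$, so the split is simply $\mathrm{AVR}(g)=0$ versus $\mathrm{AVR}(g)>0$, which is what the paper does.

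In the Euclidean-volume-growth case your plan coincides with the paper's. It uses Xu's weighted bound for $\int_{\{b\le s\}}\mathrm{Sc}_g\,|\nabla b|\,dV_g$ (the weight is exactly $|\nabla b|$), Colding-type asymptotics, dyadic de-weighting, and sandwiching $B_g(p,r)$ between sublevel sets. Your normalization worry disappears for two reasons:
\begin{itemize}
\item $s^{-1}\int_{\{b\le s\}}\mathrm{Sc}_g|\nabla b|$ is invariant under $b\mapsto\lambda b$;
\item $b/\rho$ and $|\nabla b|$ have the same asymptotic constant.
\end{itemize}
With Xu's bound $8\pi(1-\mathrm{AVR}(g))$ as used in the paper, you actually get $\mathcal C(g)\le 8\pi(1-\mathrm{AVR}(g))$. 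Using Chebyshev on the set where $|\nabla b|$ is far from its limit, instead of the paper's $L^1$ error $\int_{E_r}|\nabla\widehat b-\nabla\rho|$, is a cosmetic difference.

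For Part~II your computation is correct. After integrating $-2\int H'$ by parts it yields exactly the paper's shell inequality
\[
\int_{\{a\le f\le b\}}\mathrm{Sc}_g\,dV_g\le 4\pi\chi(\Sigma_a)(b-a)+\tfrac12\int_a^b\!\!\int_{\Sigma_t}H^2\,dA\,dt+2\int_{\Sigma_a}H\,dA-2\int_{\Sigma_b}H\,dA .
\]
The paper reaches the same inequality by bounding $\int G$ and $\int(\mathrm{Ric}_g(\nu,\nu)-G)$ separately.

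The genuine difference is topological. The paper first uses Schoen--Yau--Liu to handle the non-$\mathbb R^3$ case via the splitting proposition. For $M\cong\mathbb R^3$ it then proves that $\{f>a\}$, hence $\Sigma_a$, is connected, in order to get $\chi(\Sigma_a)\le 2$. You only use that $\chi(\Sigma_t)=\chi(\Sigma_a)$ is a fixed finite number, because the flow maps are diffeomorphisms. That suffices for finiteness and removes all topological input. Nothing is lost, since the paper's final constant $8\pi+C_G(a)$ is not sharp anyway.

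Two small points remain:
\begin{itemize}
\item There is no negative part of $H$ to handle. A positive concave function $u=\sqrt J$ on $[0,\infty)$ is nondecreasing, so $H=2\partial_s u/u\ge 0$ and the term $-2\int_{\Sigma_b}H$ can simply be dropped. Also $H^2J=4(\partial_s u)^2\le H_a^2$ bounds $\int_{\Sigma_t}H^2$ uniformly in $t$.
\item Your claim that $f-d_g(p,\cdot)$ is bounded above needs an argument, because a minimizing segment from $p$ may pass through $\Omega$. The paper's first-exit argument, showing $f-a=d_g(\cdot,C_a)$ on $\{f>a\}$, supplies it.
\end{itemize}
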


The positivity assumption on the scalar curvature in the work of Munteanu and Wang is  essential, since their argument relies on the \(\mu\)-bubble diameter estimate. Nevertheless, their argument can be generalized to situations where \(\mathrm{Sc}_g\) satisfies bounds depending on the distance function, for example
$C_0 \rho(x)^{-a} \le \mathrm{Sc}_g(x) \le C_0 \rho(x)^b,$
where \(a,b>0\) are sufficiently small universal constants. In contrast, our proof of Theorem~\ref{3D Cohn}~(I) does not rely on the \(\mu\)-bubble diameter estimate, and therefore allows  \(\mathrm{Sc}_g\) to vanish. The limitation of this argument is that, in the case $\mathrm{AVR}(g)>0$, it yields only the upper bound $\mathcal C(g)\leq 8\pi(1-\mathrm{AVR}(g)),$
rather than the identity required to confirm the conjecture of Chen, Xu, and Zhang.

A Riemannian manifold is said to admit a pole if there exists a point \(p\in M\) such that the exponential map \(\exp_p:T_pM\to M\) is a diffeomorphism. The existence of a pole is a crucial assumption in Xu's argument. If the manifold admits a point whose cut locus is compact, then the corresponding distance function provides an example of the function \(f\) appearing in Theorem~\ref{3D Cohn}~(II). Thus the condition in (II) is weaker than Xu's assumption, although the conclusion is correspondingly weaker. Instead of relying on Xu’s pole-based radial asymptotic analysis, we employ a leaf-based Jacobian concavity argument along a geodesic foliation.

\subsection{Cohn--Vossen-type estimates under quadratic scalar curvature decay}

We first prove a Cohn--Vossen-type estimate in dimension $3$ under the
borderline decay assumption $\mathrm{Sc}_g=O(\rho^{-2})$. This estimate gives
the sharp upper bound in terms of the asymptotic volume ratio.

\begin{proposition}\label{avr}
Let \((M^3,g)\) be a complete Riemannian \(3\)-manifold with \(\mathrm{Ric}_g \ge 0\). Fix \(p \in M\) and define \(\rho(x) := d_g(p,x)\). Suppose there exist constants \(C>0\) and \(R_0>0\) such that
\[
\mathrm{Sc}_g(x) \le \frac{C}{\rho(x)^2}
\qquad \text{for all } x \in M \text{ with } \rho(x) \ge R_0.
\]
Then  \(\mathcal{C}(g) \le 8\pi \).

\end{proposition}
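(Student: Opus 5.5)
The proof splits according to whether the asymptotic volume ratio vanishes, following the outline given in the introduction.

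\emph{Case $\mathrm{AVR}(g)=0$.} Since $\mathrm{Ric}_g\ge0$ in dimension three, we have $0\le\mathrm{Ric}_g\le \mathrm{Sc}_g\, g\le C\rho^{-2}g$. Hence the quadratic scalar-curvature decay gives quadratic Ricci decay. By Reiris's theorem \cite[Corollary~1.1]{zbMATH06423474}, quadratic curvature decay together with $\mathrm{AVR}(g)=0$ forces the volume growth to be at most linear, say $\mathrm{Vol}\,B_g(p,r)\le C' r$. This step needs care: I will take Reiris's conclusion in the form of linear volume growth, or sublinear growth of areas of distance spheres.

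Split $B_g(p,r)=B_g(p,R_0)\cup\{R_0\le\rho<r\}$. On the annulus, the coarea formula and $|\nabla\rho|=1$ give
\[
\int_{R_0\le\rho<r}\mathrm{Sc}_g\le C\int_{R_0}^r t^{-2}\,|\partial B_t|\,dt .
\]
Integrating by parts with $V(t)=\mathrm{Vol}\,B_t$ turns the right side into $C\,V(t)t^{-2}\big|_{R_0}^r+2C\int_{R_0}^r V(t)t^{-3}\,dt$. With $V(t)\le C' t$ this is $O(\log r)=o(r)$, so $\mathcal C(g)=0\le 8\pi$. If Reiris only yields $V(t)=o(t^2)$, I still need $o(r)$. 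In that case I expect the collapsing to give $V(t)=O(t)$ outright; this is the delicate point in this case.

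\emph{Case $\mathrm{AVR}(g)>0$.} The manifold has one end and Euclidean volume growth, so it is nonparabolic and has a minimal positive Green function $G$ with pole $p$. By Colding–Minicozzi type asymptotics, $b:=G^{-1}$, suitably normalized, satisfies $b/\rho\to \mathrm{AVR}^{-1/2}$-type constants uniformly at infinity. Xu's Green-function exhaustion \cite[Theorem~1.2]{zbMATH07177335} then gives a weighted bound of the form
\[
\int_{\{b<t\}}\mathrm{Sc}_g\,|\nabla b|^2\,(\text{weight in }b)\le 8\pi(1-\mathrm{AVR})\,t+o(t).
\]
The plan is to remove the weight $|\nabla b|^2$ and the $b$-dependent factor.

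\emph{De-weighting.} Cover the annuli $\{2^k\le b<2^{k+1}\}$ dyadically. On each, Cheng–Yau gradient estimates give $|\nabla b|\le C$. The key step is to show that the set where $|\nabla b|$ is far from its asymptotic value carries little scalar curvature. Since $\mathrm{Sc}_g\le C\rho^{-2}$, the scalar-curvature mass of the bad set in the $k$-th annulus is at most $C4^{-k}$ times its volume. That volume is $o(4^k\cdot 2^k)$ because $\int |\nabla b|^2$-deviations tend to $0$ in an averaged sense by the asymptotic-cone (Colding) estimates. Summing over $k$ gives an $o(r)$ error, so the unweighted bound $\int_{\{b<t\}}\mathrm{Sc}_g\le 8\pi(1-\mathrm{AVR})t+o(t)$ holds. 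I expect this dyadic de-weighting lemma to be the main obstacle. In particular, it requires an $L^1$ estimate on $\big||\nabla b|-c\big|$ over annuli, scaled correctly, and that estimate is what I would establish first.

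\emph{Comparison with balls.} Finally, $b/\rho\to1$ after normalization, so for every $\delta>0$ and all large $r$ we have $B_g(p,r)\subset\{b<(1+\delta)r\}$. Since $\mathrm{Sc}_g\ge0$, this yields $\mathcal C(g)\le 8\pi(1-\mathrm{AVR})(1+\delta)$. Letting $\delta\to0$ gives $\mathcal C(g)\le 8\pi$.
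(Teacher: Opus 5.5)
Your treatment of the case $\mathrm{AVR}(g)>0$ follows the paper's route. It uses Xu's weighted Green-function bound, the Colding--Minicozzi asymptotics for $b$, and a dyadic de-weighting that plays $\mathrm{Sc}_g\lesssim\widehat b^{-2}$ against an $o(r^3)$ bound for the gradient defect. The paper obtains that bound from Colding's $L^2$ convergence $\mathrm{Vol}(E_r)^{-1}\int_{E_r}|\nabla\widehat b-\nabla\rho|^2\to0$, Cauchy--Schwarz, and the pointwise inequality $\bigl|1-|\nabla\widehat b|\bigr|\le|\nabla\widehat b-\nabla\rho|$. The last step, placing balls inside slightly larger sublevel sets, is also the same. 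That part is fine.

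The gap is in the case $\mathrm{AVR}(g)=0$. Your key claim is that quadratic curvature decay plus $\mathrm{AVR}=0$ forces $\mathrm{Vol}\,B_g(p,r)\le C'r$. This is false: flat $\mathbb R^2\times S^1$ satisfies every hypothesis, has $\mathrm{AVR}=0$, and has quadratic volume growth. Reiris's corollary, as the paper uses it, only says that under quadratic Ricci decay Euclidean volume growth is equivalent to $M\cong\mathbb R^3$; it gives no growth rate.

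The missing observation is that no rate is needed. Take the identity you already wrote,
\[
\int_{R_0}^r t^{-2}\,|\partial B_t|\,dt=\frac{V(r)}{r^2}-\frac{V(R_0)}{R_0^2}+2\int_{R_0}^r\frac{V(t)}{t^3}\,dt,
\]
and divide by $r$. The first term becomes $V(r)/r^3\to0$, and the middle term clearly vanishes in the limit. The last term is $\frac{2}{r}\int_{R_0}^r V(t)t^{-3}\,dt$, a Ces\`aro mean of a function tending to $0$ (which is exactly $\mathrm{AVR}=0$), so it also tends to $0$. Hence $\mathrm{AVR}=0$ alone gives $\mathcal C(g)=0$. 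Even your fallback $V(t)=o(t^2)$ would already give $o(\log r)$, so the ``delicate point'' you flag does not exist. This is how the paper handles the case; it justifies the integration by parts using $|\partial B_t|\le 4\pi t^2$ from Bishop--Gromov, so $V$ is locally absolutely continuous.

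With this fix, your split by $\mathrm{AVR}$ instead of by topology makes Reiris's theorem unnecessary altogether. The paper uses Reiris only to match the dichotomy $M\cong\mathbb R^3$ or not with $\nu>0$ or $\nu=0$.
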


The scalar curvature decay, together with $\mathrm{Ric}_g\ge0$, gives quadratic
Ricci curvature decay. By Reiris's theorem \cite[Corollary~1.1]{zbMATH06423474}, after the elementary coarea estimate, only the Euclidean-volume-growth case remains. There we use Xu's Green function exhaustion: Xu's  weighted estimate \cite{zbMATH07177335} is converted, via the quadratic decay and a dyadic de-weighting argument, into an unweighted estimate on geodesic balls.

\begin{proof}

Fix $p\in M$ and write $B(r):=B_g(p,r)$ and $V(r):=\mathrm{Vol}_g(B(r))$. By the Bishop--Gromov volume comparison, the ratio $V(r)/(\omega_3 r^3)$ is nonincreasing. Therefore
$$
\nu:=\mathrm{AVR}(g):=\lim_{r\to\infty}\frac{V(r)}{\omega_3 r^3}
$$
exists and belongs to $[0,1]$, where $\omega_3=\frac{4\pi}{3}$.

At each point, diagonalize $\mathrm{Ric}_g$ with respect to an orthonormal basis. Since $\mathrm{Ric}_g \geq 0$, its eigenvalues $\lambda_1,\lambda_2,\lambda_3$ are nonnegative. Hence
$$
\mathrm{Sc}_g=\lambda_1+\lambda_2+\lambda_3,
\qquad
|\mathrm{Ric}_g|=\left(\lambda_1^2+\lambda_2^2+\lambda_3^2\right)^{1/2}.
$$
It follows that $|\mathrm{Ric}_g|\leq \mathrm{Sc}_g \leq \sqrt{3}|\mathrm{Ric}_g|.$
Therefore, under the assumption $\mathrm{Ric}_g\geq 0$, quadratic decay of the scalar curvature is equivalent to quadratic decay of the Ricci curvature.

Since \(M\) is now assumed to be diffeomorphic to \(\mathbb R^3\), 
Reiris's theorem applies. Namely, Reiris proves that a noncompact, 
boundaryless three-dimensional manifold with nonnegative Ricci curvature 
and quadratic Ricci curvature  decay has Euclidean volume growth if and only if 
it is diffeomorphic to \(\mathbb R^3\) 
\cite[Corollary~1.1]{zbMATH06423474}.

If $M$ is not diffeomorphic to $\mathbb{R}^3$, then Reiris’s theorem implies $\nu=0$; we address this case first. Hence, for every $r\ge R_0$,
\[
\int_{B(r)}\mathrm{Sc}_g\,dV_g
\le
\int_{B(R_0)}\mathrm{Sc}_g\,dV_g
+
C\int_{B(r)\setminus B(R_0)} \rho^{-2}\,dV_g.
\]
The distance function $\rho(x) := d_g(p, x)$ is $1$-Lipschitz on $(M, g)$ and, by Rademacher’s theorem, differentiable $dV_g$-a.e. with $|\nabla \rho|_g = 1$. The Coarea formula for Lipschitz maps implies that the volume $V(t) := \operatorname{Vol}_g(B_g(p,t))$ is the primitive of the $2$-dimensional Hausdorff measure of the geodesic spheres, satisfying $V'(t) = \mathcal{H}^2(\rho^{-1}(t))$ for a.e. $t > 0$. Under the assumption $\mathrm{Ric}_g \ge 0$, the Bishop–Gromov monotonicity theorem states that $t \mapsto V(t)/t^3$ is nonincreasing, which implies $V(t) \le \frac{4\pi}{3}t^3$ and $V'(t) \le 4\pi t^2$ for a.e. $t > 0$. Consequently, the sphere-area function $t \mapsto \mathcal{H}^2(\rho^{-1}(t))$ is bounded by $4\pi t^2$ for a.e. $t$ and thus belongs to $L^1_{\mathrm{loc}}(0, \infty)$. This ensures that $V$ is locally absolutely continuous, justifying the radial reduction:
$$\int_{B(r) \setminus B(R_0)} \rho^{-2} \, dV_g = \int_{R_0}^r \left( \int_{\rho^{-1}(t)} t^{-2} \, d\mathcal{H}^2 \right) dt = \int_{R_0}^r t^{-2} V'(t) \, dt.$$

Since $V$ is locally absolutely continuous, integration by parts gives
\[
\int_{R_0}^{r} t^{-2}V'(t)\,dt
=
\frac{V(r)}{r^2}-\frac{V(R_0)}{R_0^2}
+2\int_{R_0}^{r}\frac{V(t)}{t^3}\,dt.
\]
Therefore
\[
\frac1r\int_{B(r)}\mathrm{Sc}_g\,dV_g
\le
\frac1r\int_{B(R_0)}\mathrm{Sc}_g\,dV_g
+\frac{C\,V(r)}{r^3}
-\frac{C\,V(R_0)}{R_0^2\,r}
+\frac{2C}{r}\int_{R_0}^{r}\frac{V(t)}{t^3}\,dt.
\]
Since  $\nu=0$, we have 
\[
f(t):= \frac{V(t)}{t^3}=\omega_3\frac{V(t)}{\omega_3 t^3}\longrightarrow 0
\qquad\text{as }t\to\infty.
\]
 Partitioning the integral at $T$, we have for $r \ge T$:$$\frac{1}{r} \int_{R_0}^{r} f(t) \, dt = \frac{1}{r} \int_{R_0}^{T} f(t) \, dt + \frac{1}{r} \int_{T}^{r} f(t) \, dt \le \frac{1}{r} \int_{R_0}^{T} f(t) \, dt + \frac{r-T}{r}\eta.$$Taking the limit superior as $r \to \infty$ on both sides yields$$\limsup_{r \to \infty} \frac{1}{r} \int_{R_0}^{r} \frac{V(t)}{t^3} \, dt \le \limsup_{r \to \infty} \left( \frac{1}{r} \int_{R_0}^{T} f(t) \, dt + \eta \right) = \eta.$$Since $\eta > 0$ is arbitrary, the $\limsup$ is zero, which implies$$\lim_{r \to \infty} \frac{1}{r} \int_{R_0}^{r} \frac{V(t)}{t^3} \, dt = 0.$$
Therefore
\[  
\limsup_{r\to\infty}\frac1r\int_{B(r)}\mathrm{Sc}_g\,dV_g=0.
\]

It remains to consider the case in which $M$ is diffeomorphic to $\mathbb R^3$. In this case, by the preceding application of Reiris's theorem, we have $\nu>0$.
Thus,  the Bishop--Gromov inequality gives $V(r)\ge \nu\,\omega_3 r^3$ for all $r>0.$ Hence
\[
\int_1^\infty \frac{r}{V(r)}\,dr
\le
\frac{1}{\nu\omega_3}\int_1^\infty r^{-2}\,dr<\infty.
\]
By the criterion of Varopoulos, \(M\) is nonparabolic, that is, it admits a positive Green function. On a nonparabolic Riemannian manifold, there exists a unique minimal positive Green function. Let \(G(p,\cdot)\) denote this minimal positive Green function, and define
\[
b(x) = \bigl[n(n-2)\omega_n \cdot G(p,x)\bigr]^{\frac{1}{2-n}},
\]
where \(\omega_n\) denotes the volume of the unit ball in \(\mathbb{E}^n\). This is Xu’s associated \(b\)-function in  \cite{zbMATH07177335}. Since \(n=3\), we set
\[
\widehat{b} := \nu^{-1} b, \qquad E_r := \{\widehat{b} \le r\}.
\] 
 Let $\lambda = \nu^{-1}$. Then $\widehat{b} = \lambda b$, which implies $|\nabla \widehat{b}| = \lambda |\nabla b|$ and $E_r = \{b \le r/\lambda\}$. Setting $s = r/\lambda$, we have:$$\frac{1}{r} \int_{E_r} \mathrm{Sc}_g \, |\nabla \widehat{b}| \, dV_g = \frac{1}{\lambda s} \int_{\{b \le s\}} \mathrm{Sc}_g \, (\lambda |\nabla b|) \, dV_g = \frac{1}{s} \int_{\{b \le s\}} \mathrm{Sc}_g \, |\nabla b| \, dV_g.$$The scale factor $\lambda$ cancels out, demonstrating that the $\limsup$ depends only on the structure of the level sets and the gradient weight. Consequently, applying the estimate from Xu's theorem  \cite[Theorem~1.2]{zbMATH07177335} to $b$ and passing to the limit in $s$ (as $r \to \infty$) yields:
\begin{equation}\label{Xu}
\limsup_{r \to \infty} \frac{1}{r} \int_{E_r} \mathrm{Sc}_g \, |\nabla \widehat{b}| \, dV_g \le 8\pi(1-\nu).
\end{equation}
Since \(n=3\), by Colding--Minicozzi~\cite[Lemma~2.1]{zbMATH07177335} we have
\[
\lim_{\rho(x)\to\infty}\frac{b(x)}{\rho(x)}=\nu.
\]
Therefore, since \(\widehat{b} := \nu^{-1} b\),
\[
\lim_{\rho(x)\to\infty}\frac{\widehat{b}(x)}{\rho(x)} = 1.
\]
 Hence, for every $\varepsilon\in(0,\tfrac12]$, there exists
$R_\varepsilon\ge R_0$ such that
\begin{equation}\label{eplislon }
(1-\varepsilon)\rho(x)\le \widehat b(x)\le (1+\varepsilon)\rho(x)
\qquad\text{whenever }\rho(x)\ge R_\varepsilon.
\end{equation}

We fix \(\varepsilon \in (0,\tfrac12]\) throughout the argument and let \(\varepsilon \downarrow 0\) in the final step. Choose \(R_\varepsilon\) as above and set
\[
K_\varepsilon := \overline{B_g(p, R_\varepsilon)}.
\]
By Hopf--Rinow, $K_\varepsilon$ is compact. Furthermore,  on $M\setminus K_\varepsilon$,
we have $\rho\ge \frac{\widehat b}{1+\varepsilon}.$
Hence the assumed decay of $\mathrm{Sc}_g$ implies
\begin{equation}\label{eq:Sc-bhat}
\mathrm{Sc}_g\le \frac{C(1+\varepsilon)^2}{\widehat b^2}
\qquad\text{on }M\setminus K_\varepsilon.
\end{equation}
Again $\rho$ is Lipschitz and $|\nabla \rho|=1$ almost everywhere, hence
\begin{equation}\label{a.e}
\bigl|1-|\nabla \widehat b|\bigr|
=
\bigl||\nabla \rho|-|\nabla \widehat b|\bigr|
\le
|\nabla \widehat b-\nabla \rho|
\qquad\text{a.e. on }M.
\end{equation}
By \cite[Lemma~2.3]{zbMATH07177335}, we obtain
\begin{equation}\label{eq:L2}
\frac{1}{\mathrm{Vol}_g(E_r)}
\int_{E_r}|\nabla \widehat b-\nabla \rho|^2\,dV_g\longrightarrow 0
\qquad\text{as }r\to\infty.
\end{equation}

The volume of $E_r$ has cubic growth, and  the $L^1$-error of the gradients vanishes relative to the cubic growth of the manifold. Specifically, we have
\begin{equation}\label{L1}
\limsup_{r \to \infty} \frac{\operatorname{Vol}_g(E_r)}{r^3} < \infty,
\qquad \text{and} \qquad
\lim_{r \to \infty} \frac{1}{r^3} \int_{E_r} |\nabla \widehat{b} - \nabla \rho| \, dV_g = 0.
 \end{equation}
Indeed, for  $\varepsilon = 1/2$, \eqref{eplislon }  implies that there exists a constant $R_* \ge 1$ such that $\widehat{b}(x) \ge \frac{1}{2}\rho(x)$ for all $x \in M$ satisfying $\rho(x) \ge R_*$. Let $r \ge R_*$. To establish the inclusion of $E_r$ in a geodesic ball, consider $x \notin B_g(p, 2r+1)$. It follows that $\rho(x) \ge 2r+1 > R_*$, and by the choice of $R_*$:$$\widehat{b}(x) \ge \frac{1}{2}\rho(x) > \frac{1}{2}(2r) = r.$$By contrapositive, if $\widehat{b}(x) \le r$, then $x \in B_g(p, 2r+1)$. Thus, $E_r \subset B_g(p, 2r+1)$ for all $r \ge R_*$. Applying the Bishop–Gromov inequality for $\operatorname{Ric}_g \ge 0$, we have $\operatorname{Vol}_g(B_g(p, R)) \le \frac{4\pi}{3}R^3$. Consequently:$$\frac{\operatorname{Vol}_g(E_r)}{r^3} \le \frac{4\pi}{3} \frac{(2r+1)^3}{r^3}.$$Taking the limit superior as $r \to \infty$ yields $\limsup_{r \to \infty} r^{-3}\operatorname{Vol}_g(E_r) \le \frac{32\pi}{3} < \infty$.

Let $\delta_r$ be the normalized $L^2$-error from \eqref{eq:L2}, satisfying:$$\int_{E_r} |\nabla \widehat{b} - \nabla \rho|^2 \, dV_g = \delta_r \operatorname{Vol}_g(E_r), \quad \text{where } \lim_{r \to \infty} \delta_r = 0.$$Applying the Cauchy–Schwarz inequality to the $L^1$-error $F(r) := \int_{E_r} |\nabla \widehat{b} - \nabla \rho| \, dV_g$:$$F(r) \le \left( \int_{E_r} 1^2 \, dV_g \right)^{1/2} \left( \int_{E_r} |\nabla \widehat{b} - \nabla \rho|^2 \, dV_g \right)^{1/2} = \sqrt{\delta_r} \operatorname{Vol}_g(E_r).$$Dividing by $r^3$:$$\frac{F(r)}{r^3} \le \sqrt{\delta_r} \left( \frac{\operatorname{Vol}_g(E_r)}{r^3} \right).$$Since the term in parentheses is bounded and $\lim_{r \to \infty} \sqrt{\delta_r} = 0$, we conclude that $\lim_{r \to \infty} r^{-3} F(r) = 0$.

We need the following lemma to finish the proof.
\begin{lemma}\label{lem:dyadic-deweight}
Let \((X,\mu)\) be a measure space, and let \(u : X \to [a,\infty)\) be a measurable function with \(a>0\). Define $H(r) := \mu\bigl(\{u \le r\}\bigr)$  for $r \ge a.$
Assume that \(H(r) < \infty\) for every finite \(r\), and that
\[
\lim_{r \to \infty} \frac{H(r)}{r^3} = 0.
\]
Then
\[
\limsup_{r \to \infty} \frac{1}{r} \int_{\{u \le r\}} u^{-2}\, d\mu = 0.
\]
\end{lemma}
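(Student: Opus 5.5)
The plan is to decompose the sublevel set $\{u\le r\}$ dyadically in the values of $u$ and to bound $u^{-2}$ on each dyadic shell by its smallest value. Fix $r\ge a$ and let $K$ be the least integer with $2^{K}a\ge r$. Put $S_k:=\{2^{k-1}a<u\le 2^k a\}$ for $k\ge 1$ and $S_0:=\{u=a\}$. Since $u\ge a$ everywhere, these shells cover $\{u\le r\}$ once they are intersected with $\{u\le r\}$. On $S_k$ we have $u^{-2}\le 4^{1-k}a^{-2}$ and $\mu(S_k\cap\{u\le r\})\le H(\min(2^ka,r))$. This gives
\[
\int_{\{u\le r\}}u^{-2}\,d\mu\le a^{-2}H(a)+\sum_{k=1}^{K}\frac{4}{(2^k a)^2}\,H\bigl(\min(2^k a,r)\bigr).
\]

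Next I use the hypothesis. Fix $\eta>0$ and choose $T\ge a$ with $H(t)\le \eta t^3$ for all $t\ge T$. I split the sum at the index where $2^ka$ first exceeds $T$. The shells with $2^ka\le 2T$ are finitely many, and their total contribution is bounded by a constant $C_T$ that does not depend on $r$, since $H$ is monotone and finite, so $H(\cdot)\le H(2T)$ there. Each of the remaining shells contributes at most $4\eta\,(2^ka)^{-2}\min(2^ka,r)^3\le 4\eta\, 2^k a$. The geometric sum $\sum_{k\le K}2^ka$ is at most $2^{K+1}a\le 4r$, because $2^{K-1}a<r$. Altogether this gives
\[
\int_{\{u\le r\}}u^{-2}\,d\mu\le C_T+16\eta r .
\]
Dividing by $r$ and letting $r\to\infty$ leaves $\limsup\le 16\eta$. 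Since $\eta>0$ was arbitrary and the integral is nonnegative, the limsup is $0$.

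The only delicate point is the last, partial shell near $r$ (when $r$ is not a power of two times $a$). Capping with $\min(2^ka,r)$ handles it, because $H$ is only evaluated at arguments up to $r$. The other point to watch is that the constant $C_T$ must not depend on $r$; this holds because the shell index range below $T$ is fixed. A continuous alternative is the layer-cake form $\int u^{-2}\,d\mu=\int_a^r t^{-2}\,dH(t)$, followed by integration by parts: this gives $H(r)/r^2+2\int_a^r H(t)t^{-3}\,dt$, which is $o(r)$ by the same Cesàro argument used earlier in the proof of Proposition~\ref{avr}. That route needs Lebesgue--Stieltjes care, so I prefer the dyadic version as the main argument.
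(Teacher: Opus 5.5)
Your proof is correct and takes essentially the same route as the paper. Both decompose dyadically in the values of $u$, absorb everything below a threshold $T$ into an $r$-independent constant, and sum the high shells geometrically to get $C_T+16\eta r$. The differences are cosmetic. The paper anchors its shells at $T$ rather than at $a$. It also does not need your $\min(\cdot,r)$ cap: $H(t)\le\eta t^3$ holds for every $t\ge T$, so evaluating $H$ at the overshooting endpoint $2^{N+1}T<2r$ already gives the same bound.
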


\begin{proof}
Fix $\eta>0$. Choose $T\ge a$ such that $H(t)\le \eta t^3$ for all $t\ge T.$ Let $r\ge 2T$, and choose $N\in\mathbb N$ such that
\[
2^N T<r\le 2^{N+1}T.
\]
Then
\[
\int_{\{u\le r\}} u^{-2}\,d\mu
\le
\int_{\{u\le T\}} u^{-2}\,d\mu
+\sum_{j=0}^{N}\int_{\{2^jT<u\le 2^{j+1}T\}} u^{-2}\,d\mu.
\]
Since $u\ge a$ on $\{u\le T\}$,
\[
\int_{\{u\le T\}} u^{-2}\,d\mu \le a^{-2}H(T)=:C_T<\infty.
\]
For each $j=0,\dots,N$,
\[
\int_{\{2^jT<u\le 2^{j+1}T\}} u^{-2}\,d\mu
\le (2^jT)^{-2}\,H(2^{j+1}T)
\le (2^jT)^{-2}\,\eta(2^{j+1}T)^3
=8\eta\,2^jT.
\]
Hence
\[
\int_{\{u\le r\}} u^{-2}\,d\mu
\le C_T+8\eta T\sum_{j=0}^{N}2^j
\le C_T+8\eta T(2^{N+1}-1).
\]
Because $2^N T<r$, we have $2^{N+1}T<2r$, and therefore
\[
\int_{\{u\le r\}} u^{-2}\,d\mu \le C_T+16\eta r.
\]
Dividing by $r$ and letting $r\to\infty$:
\[
\limsup_{r\to\infty}\frac1r\int_{\{u\le r\}} u^{-2}\,d\mu\le 16\eta.
\]
Since $\eta>0$ is arbitrary, the conclusion follows.
\end{proof}

Define a positive measure
\[
d\mu_\varepsilon
:=
\mathbf 1_{M\setminus K_\varepsilon}\,
|\nabla \widehat b-\nabla \rho|\,dV_g.
\]
Then
\[
H_\varepsilon(r):=\mu_\varepsilon(E_r)
=
\int_{E_r\setminus K_\varepsilon}
|\nabla \widehat b-\nabla \rho|\,dV_g.
\]
We now verify all hypotheses of Lemma~\ref{lem:dyadic-deweight} for
$u=\widehat b$ and $\mu=\mu_\varepsilon$.

First, we establish that the measure $\mu_\varepsilon$ satisfies the asymptotic growth condition required by the lemma. By the definition of $H_\varepsilon(r)$ and the non-negativity of the gradient error term, we observe the following inequality:$$H_\varepsilon(r) = \int_{E_r \setminus K_\varepsilon} |\nabla \widehat{b} - \nabla \rho| \, dV_g \le \int_{E_r} |\nabla \widehat{b} - \nabla \rho| \, dV_g = F(r).$$  The non-negativity of $H_\varepsilon(r)$ and \eqref{L1} imply that $$\lim_{r \to \infty} \frac{H_\varepsilon(r)}{r^3} = 0.$$
Second, by \eqref{eplislon },
\[
\widehat b\ge (1-\varepsilon)\rho\ge (1-\varepsilon)R_\varepsilon=:a_\varepsilon>0
\qquad\text{on }M\setminus K_\varepsilon.
\]
Third, we check that $H_\varepsilon(r)<\infty$ for every finite $r$.
Indeed, if $x\in E_r\setminus K_\varepsilon$, then $\widehat b(x)\le r$, and by
\eqref{eplislon },
\[
(1-\varepsilon)\rho(x)\le \widehat b(x)\le r.
\]
Hence
\[
\rho(x)\le \frac{r}{1-\varepsilon}<\frac{r}{1-\varepsilon}+1,
\]
so
\[
E_r\setminus K_\varepsilon
\subset
B_g\!\left(p,\frac{r}{1-\varepsilon}+1\right)\setminus K_\varepsilon.
\]
Therefore
\[
E_r\setminus K_\varepsilon
\subset
\overline{B_g\!\left(p,\frac{r}{1-\varepsilon}+1\right)}\setminus B_g(p,R_\varepsilon),
\]
and the latter is a compact annulus. Since $p\in K_\varepsilon$, the function
$\widehat b$ is smooth on a neighborhood of this annulus, hence
$|\nabla \widehat b|$ is bounded there; also $|\nabla \rho|=1$ almost
everywhere. Thus
\[
|\nabla \widehat b-\nabla \rho|
\le |\nabla \widehat b|+|\nabla \rho|
\]
is integrable on this compact annulus, and therefore $H_\varepsilon(r)<\infty$.

Having verified the hypotheses of Lemma \ref{lem:dyadic-deweight} for the measure $\mu_\varepsilon$ and the potential $u = \widehat{b}$, the lemma yields the following asymptotic vanishing of the weighted gradient error:$$\limsup_{r \to \infty} \frac{1}{r} \int_{E_r \setminus K_\varepsilon} \widehat{b}^{-2} |\nabla \widehat{b} - \nabla \rho|  dV_g = 0.$$  Combining the scalar curvature decay estimate \eqref{eq:Sc-bhat} with the gradient difference bound \eqref{a.e}, we obtain a pointwise estimate for the integrand on $M \setminus K_\varepsilon$:$$\mathrm{Sc}_g \, \bigl| 1 - |\nabla \widehat{b}| \bigr| \le C(1+\varepsilon)^2 \widehat{b}^{-2} |\nabla \widehat{b} - \nabla \rho|.$$Integrating this inequality over  $E_r \setminus K_\varepsilon$ and normalizing by $r$, we conclude:\begin{equation}\label{eq:outside-error-final}
\limsup_{r \to \infty} \frac{1}{r}
\int_{E_r \setminus K_\varepsilon} \mathrm{Sc}_g \,\bigl| 1 - |\nabla \widehat{b}| \bigr| \, dV_g
\le
C(1+\varepsilon)^2 \lim_{r \to \infty} \frac{1}{r}
\int_{E_r \setminus K_\varepsilon} \widehat{b}^{-2} \, |\nabla \widehat{b} - \nabla \rho| \, dV_g
= 0.
\end{equation}
Now define
\[
C_\varepsilon
:=
\int_{K_\varepsilon}\mathrm{Sc}_g\,(1+|\nabla \widehat b|)\,dV_g.
\]
This is finite. Indeed, \(K_\varepsilon\) is compact and \(\mathrm{Sc}_g\) is smooth. Moreover, \(b\) is smooth on \(M \setminus \{p\}\) by elliptic regularity, and~\cite[(2.1)--(2.2)]{zbMATH07177335} yields
\[
\lim_{\rho(x)\to 0} \frac{b(x)}{\rho(x)} = 1,
\qquad
\lim_{\rho(x)\to 0} |\nabla b|(x) = 1.
\]
Hence \(|\nabla \widehat{b}| = \nu^{-1} |\nabla b|\) is bounded near \(p\). For all sufficiently large \(r\) such that \(K_\varepsilon \subset E_r\), we decompose the global error integral. Substituting the constant \(C_\varepsilon\) and normalizing by \(r\), we obtain:$$\frac{1}{r} \int_{E_r} \mathrm{Sc}_g \bigl| 1 - |\nabla \widehat{b}| \bigr| \, dV_g \le \frac{C_\varepsilon}{r} + \frac{1}{r} \int_{E_r \setminus K_\varepsilon} \mathrm{Sc}_g \bigl| 1 - |\nabla \widehat{b}| \bigr| \, dV_g.$$Since $C_\varepsilon$ is a constant independent of $r$, the first term on the right-hand side vanishes as $r \to \infty$. Combining this with the previously established exterior limit in \eqref{eq:outside-error-final}, it follows that:$$\limsup_{r \to \infty} \frac{1}{r} \int_{E_r} \mathrm{Sc}_g \bigl| 1 - |\nabla \widehat{b}| \bigr|dV_g = 0.$$

Since $\mathrm{Ric}_g \ge 0$,  $\mathrm{Sc}_g\geq 0$. By the triangle inequality, the unit constant satisfies the pointwise bound $1 \le |\nabla \widehat{b}| + \bigl| 1 - |\nabla \widehat{b}| \bigr|$. Multiplying by $\mathrm{Sc}_g$ and integrating over  $E_r$, we obtain:$$\int_{E_r} \mathrm{Sc}_g \, dV_g \le \int_{E_r} \mathrm{Sc}_g |\nabla \widehat{b}| \, dV_g + \int_{E_r} \mathrm{Sc}_g \bigl| 1 - |\nabla \widehat{b}| \bigr| \, dV_g.$$Normalizing by $r$, we have:$$\frac{1}{r} \int_{E_r} \mathrm{Sc}_g \, dV_g \le \frac{1}{r} \int_{E_r} \mathrm{Sc}_g |\nabla \widehat{b}| \, dV_g + \frac{1}{r} \int_{E_r} \mathrm{Sc}_g \bigl| 1 - |\nabla \widehat{b}| \bigr| \, dV_g.$$
This implies 
\begin{equation}\label{eq:deweighted-lim}\limsup_{r \to \infty} \frac{1}{r} \int_{E_r} \mathrm{Sc}_g  dV_g \le \limsup_{r \to \infty} \frac{1}{r} \int_{E_r} \mathrm{Sc}_g |\nabla \widehat{b}| dV_g.\end{equation}

Next, \eqref{eplislon } implies that if $x\in B(r)\setminus K_\varepsilon$,
then
\[
\widehat b(x)\le (1+\varepsilon)\rho(x)<(1+\varepsilon)r.
\]
Therefore, for all $r\ge R_\varepsilon$,
\[
B(r)\subset K_\varepsilon\cup E_{(1+\varepsilon)r}.
\]
Then $\mathrm{Sc}_g\ge 0$ yields
\[
\frac{1}{r}\int_{B(r)}\mathrm{Sc}_g\,dV_g
\le
\frac{1}{r}\int_{K_\varepsilon}\mathrm{Sc}_g\,dV_g
+\frac{1}{r}\int_{E_{(1+\varepsilon)r}}\mathrm{Sc}_g\,dV_g.
\]

Using \eqref{eq:deweighted-lim} and \eqref{Xu}, we get
\[
\limsup_{r\to\infty}\frac1r\int_{B(r)}\mathrm{Sc}_g\,dV_g
\le
(1+\varepsilon)
\limsup_{r\to\infty}\frac{1}{(1+\varepsilon)r}\int_{E_{(1+\varepsilon)r}}\mathrm{Sc}_g\,|\nabla \widehat b|\,dV_g
\le
(1+\varepsilon)\,8\pi(1-\nu).
\]
Since this inequality holds for every $\varepsilon > 0$, we may take the limit as $\varepsilon \to 0^+$ to conclude:
\[
\limsup_{r\to\infty}\frac1r\int_{B(r)}\mathrm{Sc}_g\,dV_g
\le
8\pi(1-\nu)\le 8\pi.
\]
\end{proof}

\begin{remark}
More generally, suppose that for some $a,C,R_0>0$,
\[
\mathrm{Sc}_g(x)\le C\rho(x)^{-a}
\qquad\text{whenever }\rho(x)\ge R_0 .
\]
Then $a=2$ is the borderline exponent for the present method: one has
$\mathcal C(g)=0$ when $a>2$, whereas for $0<a<2$ the argument gives no finite
upper bound.

Indeed, since $\mathrm{Ric}_g\ge0$, we have $\mathrm{Sc}_g\ge0$, and the
Bishop--Gromov inequality  gives $V'(t)\le4\pi t^2$ for a.e. $t>0$,
thus the coarea formula yields
\begin{equation*}
\int_{B_g(p,r)\setminus B_g(p,R_0)}\rho^{-a}\,dV_g
=
\int_{R_0}^{r}t^{-a}V'(t)\,dt
\le
4\pi\int_{R_0}^{r}t^{2-a}\,dt .
\end{equation*}
If $a>2$, the right-hand side is $o(r)$, and hence $\mathcal C(g)=0$.

The same threshold appears in the $b$-function de-weighting step. On
$M\setminus K_\varepsilon$, the estimate \eqref{eq:Sc-bhat} is replaced by
$\mathrm{Sc}_g\le C(1+\varepsilon)^a\widehat b^{-a}.$
With
\[
d\mu_\varepsilon
=
\mathbf 1_{M\setminus K_\varepsilon}
|\nabla\widehat b-\nabla\rho|\,dV_g,
\qquad
H_\varepsilon(r):=\mu_\varepsilon(E_r)=o(r^3),
\]
the same dyadic argument as in Lemma~\ref{lem:dyadic-deweight}, with exponent
$a$, gives
\begin{equation*}
\int_{E_r\setminus K_\varepsilon}
\widehat b^{-a}\,|\nabla\widehat b-\nabla\rho|\,dV_g
=
\begin{cases}
o(r^{3-a}), & 0<a<3,\\[0.3ex]
o(\log r), & a=3,\\[0.3ex]
O(1), & a>3.
\end{cases}
\end{equation*}
Thus this error is $o(r)$ exactly when $a>2$.

 These estimates do not control
$\mathcal C(g)$ when $0<a<2$. Moreover, in this range the decay assumption
$\mathrm{Sc}_g=O(\rho^{-a})$ no longer implies quadratic Ricci decay, so the
Reiris input used to obtain Euclidean volume growth is unavailable from this
hypothesis alone. Hence the present method treats the borderline case $a=2$,
gives the stronger conclusion $\mathcal C(g)=0$ for $a>2$, and does not extend
to $0<a<2$.
\end{remark}

\begin{remark}
The argument above gives only the upper bound
$\mathcal C(g)\le 8\pi(1-\nu),$ but it does not imply the identity
$\mathcal C(g)= 8\pi(1-\nu).$
In particular, it does not verify the conjecture of Chen, Xu, and Zhang
\cite[Conjecture~1.7]{2026arXiv260210393C} that for Euclidean volume growth one should have $\mathcal C(g)=8\pi\bigl[1-\mathrm{AVR}(g)\bigr].$
The reason is that the proof  uses only Xu's weighted \textit{upper} bound for
$\int_{E_r}\mathrm{Sc}_g\,|\nabla \widehat b|\,dV_g,$
together with a one-sided de-weighting estimate, and therefore produces only an
upper bound for the geodesic-ball quantity. It does not provide the matching
lower bound, nor does it identify the exact asymptotic behavior of the
unweighted integral.
\end{remark}

\subsection{Cohn--Vossen-type estimates from distance-like exhaustions}

We next treat the remaining three-dimensional case by assuming the existence
of a proper distance-like exhaustion on the exterior region. Its level sets
give a global foliation at infinity, allowing the scalar-curvature integral to
be controlled by the Riccati equation and Gauss--Bonnet.

\begin{proposition}\label{level set}
Let $(M^3,g)=(\mathbb{R}^3,g)$ be a complete smooth Riemannian $3$-manifold with $\mathrm{Ric}_g \ge 0$. Suppose there exists a compact domain with smooth nonempty boundary $\Omega\Subset M^3$ such that $\Omega=\overline{\operatorname{Int}\Omega},$
and the set $E:=M^3\setminus \operatorname{Int}\Omega$
is connected. Let $E^\circ:=M^3\setminus \Omega$. Assume further that there exists a proper function $f:E\to[0,\infty)$
such that
\[
f\in C^0(E)\cap C^\infty(E^\circ), \qquad f^{-1}(0)=\partial E, \qquad |\nabla f|_g=1 \ \text{on } E^\circ .
\]
Then $\mathcal{C}(g)$ is finite.
\end{proposition}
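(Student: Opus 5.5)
The plan follows the sketch in the introduction: foliate $E^\circ$ by the level sets $\Sigma_t:=f^{-1}(t)$, control the scalar curvature on each leaf by the Gauss equation, and integrate over $t$ with the coarea formula ($|\nabla f|=1$).

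\emph{Structure of the foliation.} Since $|\nabla f|=1$ on $E^\circ$ and $f$ is proper, every $t>0$ is a regular value. Each $\Sigma_t$ is a compact smooth surface, and the integral curves of $\nabla f$ are unit-speed geodesics. The gradient flow $\Phi_t$ identifies $\Sigma_t$ with $\Sigma_{t_0}$ for $t\ge t_0>0$, so $E^\circ\cong \Sigma_{t_0}\times(0,\infty)$ and $\{f\le t\}$ is compact.

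\emph{Topology of the leaves.} Because $M\cong\mathbb R^3$ and $\Omega$ is compact with connected complement $E$, I expect to show that $\partial\Omega$ is connected and is a $2$-sphere. The argument would combine the long exact sequence/Alexander duality with the fact that $E$ has a product structure and only one end. Consequently every $\Sigma_t$ is a sphere, and Gauss--Bonnet gives $\int_{\Sigma_t}K_{\Sigma_t}=4\pi$, which is the source of the topological $8\pi$.

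\emph{Curvature identities.} Let $\nu=\nabla f$, let $H$ and $A$ be the mean curvature and second fundamental form of $\Sigma_t$, and let $J(x,t)$ be the area Jacobian of $\Phi_t$. The Riccati equation gives
\[
H'=-|A|^2-\mathrm{Ric}(\nu,\nu)\le -\tfrac12H^2 .
\]
Hence $(\sqrt J)''=-\tfrac12\sqrt J\,\bigl(|\mathring A|^2+\mathrm{Ric}(\nu,\nu)\bigr)\le0$, so $\sqrt J$ is concave. Since $J$ is positive for all $t$, concavity forces $H\ge0$ for $t>0$ and makes $\sqrt J$ grow at most linearly. It follows that $|\Sigma_t|\le C(1+t)^2$, and more sharply $\int_{\Sigma_t}H\,dA=\tfrac{d}{dt}|\Sigma_t|\le C(1+t)$.

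The traced Gauss equation reads
\[
\mathrm{Sc}_g=2K_{\Sigma}+2\mathrm{Ric}(\nu,\nu)+|A|^2-H^2 .
\]
I will rewrite $2\mathrm{Ric}(\nu,\nu)+|A|^2=-2H'-|A|^2$. Combined with $\tfrac{d}{dt}dA=H\,dA$, this gives
\[
\int_{\Sigma_t}\mathrm{Sc}_g\,dA=8\pi-2\frac{d}{dt}\int_{\Sigma_t}H\,dA+\int_{\Sigma_t}\bigl(H^2-|A|^2\bigr)\,dA .
\]
The last term is at most $\tfrac12\int_{\Sigma_t} H^2\,dA$.

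\emph{Integration and comparison with balls.} Integrating from $t_0$ to $T$, the middle term becomes a boundary term $-2\int_{\Sigma_T}H+2\int_{\Sigma_{t_0}}H$. The first of these is $\le0$ because $H\ge0$, and the second is a constant. So it remains to bound $\int_{t_0}^T\!\int_{\Sigma_t}H^2\,dA\,dt$ by $CT$. \textbf{This is the main obstacle.} My first attempt will use the concavity pointwise along each normal geodesic: $\sqrt J(t)\ge \sqrt J(s)+(t-s)(\sqrt J)'(t)$ for $s<t$, which gives $H(x,t)=2(\sqrt J)'/\sqrt J\le 2/(t-t_0)$. Then $\int_{\Sigma_t}H^2\le 4|\Sigma_t|/(t-t_0)^2\le C$, and the integral is $O(T)$.

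Finally, $|\nabla f|=1$ makes $f$ $1$-Lipschitz, so $|f(x)-f(q)|\le d_g(p,x)+c$ for a fixed base point $q\in E$. Hence $B_g(p,r)\subset\Omega\cup\{f\le r+c\}$. Adding the compact contribution from $\Omega\cup\{f\le t_0\}$ and using $\mathrm{Sc}_g\ge0$, we get $\int_{B(r)}\mathrm{Sc}_g\le C+8\pi r+Cr$, and therefore $\mathcal C(g)<\infty$.
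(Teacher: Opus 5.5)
Your proposal is essentially the paper's proof. Both use the unit-speed geodesic flow of $\nabla f$, concavity of $\sqrt J$ to get $H\ge0$, the traced Gauss equation with Gauss--Bonnet on the leaves, and comparison of balls with shells. Your per-leaf identity is the paper's Steps 5--6 with different bookkeeping. Since $H'+H^2=G-\mathrm{Ric}_g(\nu,\nu)$ with $G=H^2-|A|^2$, your term $-2\frac{d}{dt}\int_{\Sigma_t}H\,dA$ integrates to the paper's boundary bound for $2\int(\mathrm{Ric}_g(\nu,\nu)-G)$, and your $\int(H^2-|A|^2)$ is the paper's $\int G$.

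Three steps need repair, none serious.

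(i) In the step you flag as the main obstacle, the bound $4|\Sigma_t|/(t-t_0)^2$ blows up non-integrably as $t\downarrow t_0$, so the claim that it is $\le C$ fails near $t_0$. Either treat $[t_0,t_0+1]$ separately by compactness, or argue as the paper does. Since $(\sqrt J)'$ is nonincreasing and nonnegative, $H^2J=4\bigl((\sqrt J)'\bigr)^2\le H_{t_0}^2$, hence $\int_{\Sigma_t}H^2\,dA\le\int_{\Sigma_{t_0}}H_{t_0}^2\,dA$ for every $t\ge t_0$, with no splitting.

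(ii) The topological step runs the wrong way. The flow of $\nabla f$ never reaches $\partial\Omega$, where $f$ is only continuous, so the topology of $\partial\Omega$ tells you nothing directly about $\Sigma_t$. Also, being compact with connected complement does not by itself make $\partial\Omega$ a sphere (take a solid torus). All you need is that $\Sigma_{t_0}$ is connected. Every component of $\{f>t_0\}$ contains a flow line escaping to infinity, while the complement of the compact set $C:=\Omega\cup\{f\le t_0\}$ in $\mathbb R^3$ has at most one non-relatively-compact component. Hence $\{f>t_0\}\cong\Sigma_{t_0}\times(0,\infty)$ is connected. Together with the global normal $\nabla f$, this gives $\chi(\Sigma_t)\le 2$, which suffices for the upper bound.

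(iii) The condition $|\nabla f|=1$ on $E^\circ$ does not by itself make $f$ $1$-Lipschitz for $d_g$, because curves between points of $E$ may cross $\Omega$, where $f$ is undefined. Instead, stop a curve from $x$ at its first hitting point of $C$, which necessarily lies on $\Sigma_{t_0}$. This gives $f(x)-t_0\le d_g(x,C)$, from which your inclusion $B_g(p,r)\subset C\cup\{f\le r+c\}$ follows with $c=t_0+d_g(p,C)$.
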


  The proof proceeds by foliating the exterior region of $(M,g)$ into equidistant compact hypersurfaces evolving along a geodesic gradient flow. Under the curvature condition $\mathrm{Ric}_g \ge 0$, the Riccati equation implies that the area Jacobian is  concave, which provides uniform integral control over the extrinsic geometry of the leaves. Substituting these estimates into the Gauss equation and invoking the Gauss–Bonnet theorem, which yields a constant $8\pi$ contribution due to the topological constraint $M \cong \mathbb{R}^3$, we bound the integrated scalar curvature over large cylindrical shells. Finally, by placing arbitrary metric balls inside controlled shells, the claimed asymptotic bound is recovered.

\begin{proof}
We adopt the curvature convention
\[
R(X,Y)Z:=\nabla_X\nabla_YZ-\nabla_Y\nabla_XZ-\nabla_{[X,Y]}Z
\]
throughout. 
Because $\Omega=\overline{\operatorname{Int}\Omega}$, the set $E=M\setminus\operatorname{Int}\Omega$ is closed in $M$, the set $E^\circ=M\setminus\Omega$ is open in $M$, and $\partial E=\partial\Omega\neq\varnothing$.

We fix once and for all an arbitrary auxiliary level $a>0$. All constants appearing below will depend on $a$ but will be finite and independent of the radius $r$ in the final estimate. We divide the proof into seven steps.

\textit{Step 1.} We show that the positive level sets of \( f \) are nonempty, compact, smooth surfaces.
Since $f:E\to[0,\infty)$ is continuous and $E$ is connected, the image $f(E)$ is a connected subset of $[0,\infty)$. Moreover, $0\in f(E)$ because $f^{-1}(0)=\partial E\neq\varnothing$.

 We claim that $f(E)$ is unbounded above. Suppose for contradiction that $f(E) \subset [0, A]$ for some $A < \infty$. Then $E = f^{-1}([0, A])$. Because $f: E \to [0, \infty)$ is proper, the preimage of the compact interval $[0, A]$ must be compact. This would imply that $E$ itself is a compact subset of $M$. But $M$ is diffeomorphic to $\mathbb{R}^3$ and $\Omega$ is compact, so $E=M\setminus\operatorname{Int}\Omega$ is unbounded and  noncompact, a contradiction. Therefore $f(E)$ is unbounded above. Since $f(E)\subset[0,\infty)$ is connected and contains $0$, it follows that $f(E)=[0,\infty)$.

Now fix $t>0$ and set $\Sigma_t:=f^{-1}(t)$. Then $\Sigma_t\neq\varnothing$. If $x\in\Sigma_t$, then $f(x)=t>0$, so $x\notin f^{-1}(0)=\partial E$ and hence $x\in E^\circ$. On $E^\circ$ we have $|\nabla f|_g=1$, so $df\neq 0$ at every point of $\Sigma_t$. Therefore $\Sigma_t$ is a smooth embedded hypersurface in $E^\circ\subset M$. Since $\{t\}$ is compact in $[0,\infty)$ and $f$ is proper as a map from $E$ to $[0,\infty)$, the preimage $\Sigma_t=f^{-1}(\{t\})$ is compact in $E$ and hence compact in $M$. Thus, for every $t>0$, $\Sigma_t\subset E^\circ$
is a smooth compact embedded surface in $M$.

\textit{Step 2.} We show that the gradient flow is geodesic.
Define the unit normal vector field
\[
\nu:=\nabla f\qquad\text{on }E^\circ.
\]
We claim that
\[
\nabla_\nu\nu=0\qquad\text{on }E^\circ.
\]
Indeed, let $X$ be an arbitrary smooth vector field on $E^\circ$. Since $|\nu|_g^2=1$,
\[
0=\frac12X(|\nu|_g^2)=\langle\nabla_X\nu,\nu\rangle_g.
\]
Because $\nu=\nabla f$, the right-hand side equals the Hessian
$\nabla^2f(X,\nu).$ Since the Hessian of a smooth function is symmetric, we have 
\[
0=\nabla^2f(X,\nu)=\nabla^2f(\nu,X)=\langle\nabla_\nu\nu,X\rangle_g.
\]
Hence
\[
\langle\nabla_\nu\nu,X\rangle_g=0\qquad\text{for every smooth vector field }X\text{ on }E^\circ.
\]
Since the tangent space at each point is spanned by arbitrary vectors, it follows that $\nabla_\nu\nu=0$ on $E^\circ$. Consequently, every integral curve of \( \nu \) is a unit-speed geodesic, as follows from the geodesic equation.

\textit{Step 3.} We show that  the superlevel set $U_a$ is connected.
Define
\[
U_a:=f^{-1}((a,\infty)),\qquad E_a:=f^{-1}([a,\infty)),\qquad C_a:=\Omega\cup f^{-1}([0,a]).
\]
Since $[0,a]$ is compact in $[0,\infty)$ and $f:E\to[0,\infty)$ is proper, the preimage $f^{-1}([0,a])$ is compact in $E$ and hence compact in $M$. Therefore $C_a$ is compact in $M$, and $U_a=M\setminus C_a.$

We first show that no connected component of $U_a$ is  relatively compact in $M$. Let $x\in U_a$ be arbitrary, and let $\gamma:[0,b)\to E^\circ$
be the maximal integral curve of the vector field $\nu$ with $\gamma(0)=x$. Since $\gamma'=\nu$ and $|\nabla f|_g=1$ on $E^\circ$,
\[
\frac{d}{ds}f(\gamma(s))=df_{\gamma(s)}(\gamma'(s))=df_{\gamma(s)}(\nu)=|\nabla f|_g^2(\gamma(s))=1
\]
for all $s\in[0,b)$. Integrating from $0$ gives
\[
f(\gamma(s))=f(x)+s\qquad\text{for all }s\in[0,b).
\]
We claim that $b=\infty$. Suppose for contradiction that $b<\infty$. Since $|\gamma'|_g=1$, for all $0\le s<t<b$ we have
\[
d_g(\gamma(s),\gamma(t))\le\int_s^t|\gamma'(\tau)|_g\,d\tau=t-s.
\]
Thus $\{\gamma(s)\}_{s\in[0,b)}$ is Cauchy as $s\uparrow b$. Because $(M,g)$ is complete, the Hopf--Rinow theorem implies that $(M,d_g)$ is a complete metric space, so there exists $y\in M$ such that $\gamma(s)\to y$ as $s\uparrow b$. Since $E$ is closed in $M$ and $\gamma([0,b))\subset E$, we have $y\in E$. By continuity of $f$ on $E$,
\[
f(y)=\lim_{s\uparrow b}f(\gamma(s))=f(x)+b>a>0,
\]
so $y\notin\partial E$ and hence $y\in E^\circ$. The vector field $\nu$ is smooth on the open set $E^\circ$. By the standard continuation theorem for ordinary differential equations on manifolds, the integral curve $\gamma$ extends to an interval strictly larger than $[0,b)$, contradicting maximality of $b$. Therefore $b=\infty$.

Since $f(\gamma(s))=f(x)+s\to\infty$ as $s\to\infty$, the image $\gamma([0,\infty))$ cannot be relatively compact in $M$. Indeed, if its closure \( \overline{\gamma([0,\infty))} \) were compact, then, since \( E \) is closed and \( \gamma([0,\infty)) \subset E \), this closure would be a compact subset of \( E \). 
However, the continuous function \( f \) would then be bounded on this set, contradicting the fact that \( f(\gamma(s)) \to \infty \). The curve $\gamma([0,\infty))$ is connected, lies entirely in $U_a$, and contains $x$. Therefore the connected component of $x$ in $U_a$ is not relatively compact in $M$. Since $x\in U_a$ was arbitrary, every connected component of $U_a$ is not relatively compact in $M$.

We now use the topology of $M$. Since $M$ is diffeomorphic to $\mathbb{R}^3$, the complement of any compact subset of $M$ has at most one non relatively compact component. Indeed, let $\Psi:M\to\mathbb{R}^3$ be a fixed diffeomorphism. Let $K\subset M$ be any compact set. Then \(\Psi(K)\) is compact in \(\mathbb{R}^3\). Hence there exists \(R_0 > 0\) such that \(\Psi(K)\) is contained in the Euclidean \(R_0\)-ball \(\overline{B}_{\mathbb{E}^3}(0,R_0)\).
Let $V$ be any connected component of $\mathbb{R}^3\setminus\Psi(K)$ that is not relatively compact in $\mathbb{R}^3$. Then $V$ must intersect the exterior region $\mathbb{R}^3\setminus\overline{B}_{\mathbb{E}^3}(0,R_0)$; otherwise $V\subset\overline{B}_{\mathbb{E}^3}(0,R_0)$ and $V$ would be relatively compact. The set $\mathbb{R}^3\setminus\overline{B}_{\mathbb{E}^3}(0,R_0)$ is connected and contained in $\mathbb{R}^3\setminus\Psi(K)$, so it must lie entirely in the component $V$. Therefore every non-relatively-compact connected component of $\mathbb{R}^3\setminus\Psi(K)$ contains this same unbounded exterior region and hence coincides with it. In particular, there is at most one such component. Pulling back by the diffeomorphism $\Psi$, the same conclusion holds for complements in $M$.

Applying this topological fact to the compact set $C_a$, the open set $U_a=M\setminus C_a$ has at most one connected component that is not relatively compact in $M$. But we have already shown that every connected component of $U_a$ is not relatively compact. Therefore $U_a$ has exactly one connected component, i.e., $U_a\text{ is connected.}$

\textit{Step 4. } We establish a product decomposition on the open cylinder.
Let $\Phi_s$ denote the local flow of the vector field $\nu$ on $E^\circ$. For each $y\in\Sigma_a$, let
\[
\gamma_y:[0,b_y)\to E^\circ
\]
be the maximal integral curve of $\nu$ with $\gamma_y(0)=y$. By the same completeness argument used above, the completeness of $(M, g)$ and the fact that $f \circ \gamma_y(s) = a + s$ ensure that the integral curves starting at any $y \in \Sigma_a$ are defined for all $s \in [0, \infty)$. Define
\[
\overline{F}:\Sigma_a\times[0,\infty)\to E_a,\qquad\overline{F}(y,s):=\gamma_y(s),
\]
and let
\[
F:=\overline{F}|_{\Sigma_a\times(0,\infty)}:\Sigma_a\times(0,\infty)\to U_a.
\]
We first verify that $\overline{F}$ (and hence $F$) is smooth. Fix an arbitrary $\Lambda>0$. For every $(y,s)\in\Sigma_a\times[0,\Lambda]$ we have
\[
f(\overline{F}(y,s))=a+s\in[a,a+\Lambda],
\]
so
\[
\overline{F}(\Sigma_a\times[0,\Lambda])\subset E_{a,a+\Lambda}:=f^{-1}([a,a+\Lambda]).
\]
Since $[a,a+\Lambda]$ is compact in $[0,\infty)$ and $f$ is proper, $E_{a,a+\Lambda}$ is compact in $E$ and hence compact in $M$. Because $a>0$, we have $E_{a,a+\Lambda}\subset E^\circ$. Thus $\nu$ is smooth on an open neighborhood of the compact set $E_{a,a+\Lambda}$. By the standard smooth dependence of solutions of ODEs on initial data and time, the map
\[
\overline{F}:\Sigma_a\times[0,\Lambda]\to E^\circ
\]
is smooth. Since $\Lambda>0$ was arbitrary, $\overline{F}$ is smooth on $\Sigma_a\times[0,\infty)$, and therefore $F$ is smooth on $\Sigma_a\times(0,\infty)$.

For every $(y,s)\in\Sigma_a\times(0,\infty)$,
\[
\frac{d}{ds}f(F(y,s))=df_{F(y,s)}(\partial_sF)=df_{F(y,s)}(\nu)=|\nabla f|_g^2(F(y,s))=1.
\]
Integrating from $0$ and using $f(y)=a$  yields
\[
f(F(y,s))=a+s\qquad\text{for all }(y,s)\in\Sigma_a\times(0,\infty).
\]

Furthermore, the map $F:\Sigma_a\times(0,\infty)\to U_a$ is a diffeomorphism.
 Suppose $F(y_1,s_1)=F(y_2,s_2)$. Applying $f$ gives
\[
a+s_1=f(F(y_1,s_1))=f(F(y_2,s_2))=a+s_2,
\]
so $s_1=s_2$.  Since $\nu$ is a smooth vector field on the open set $E^\circ$, it is locally Lipschitz. By the Picard--Lindelöf theorem, the integral curves of $\nu$ are unique. This implies that $y_1=y_2$, and therefore $F$ is injective.

Now we prove the surjectivity of \(F\). Let $x\in U_a$ be arbitrary and set $L:=f(x)-a>0$. Let $\alpha:[0,b)\to E^\circ$
be the maximal integral curve of $-\nu$ with $\alpha(0)=x$. Then
\[
\frac{d}{dt}f(\alpha(t))=df_{\alpha(t)}(\alpha'(t))=-df_{\alpha(t)}(\nu)=-1,
\]
so
\[
f(\alpha(t))=f(x)-t\qquad\text{for all }t\in[0,b).
\]
We claim that $b>L$. Suppose instead that $b\le L$. Since $|\alpha'|_g=1$, the same Cauchy estimate as above shows that $\alpha(t)\to z\in M$ as $t\uparrow b$ for some $z\in E$ (by closedness of $E$). Continuity of $f$ gives
\[
f(z)=f(x)-b\ge f(x)-L=a>0,
\]
so $z\in E^\circ$. Smoothness of $-\nu$ on $E^\circ$ allows extension of $\alpha$ past $b$, contradicting maximality. Hence $b>L$ and $\alpha(L)$ is well-defined. Set
$y:=\alpha(L).$ Then $f(y)=f(x)-L=a$, so $y\in\Sigma_a$. Define
$\beta(s):=\alpha(L-s)$ for $ 0\le s\le L.$
Then $\beta'(s)=\nu(\beta(s))$ and $\beta(0)=y$, so uniqueness of integral curves gives $\beta(s)=\gamma_y(s)$ for $0\le s\le L$. Evaluating at $s=L$ yields
$x=\beta(L)=\gamma_y(L)=F(y,L).$ Thus $F$ is surjective.

 Next, we prove that \(F\) is a local diffeomorphism. Fix $(y,s) \in \Sigma_a \times (0, \infty)$. To verify that $F$ is a local diffeomorphism, we show that its differential $dF_{(y,s)}$ is a linear isomorphism from $T_y\Sigma_a \oplus \mathbb{R}\partial_s$ to $T_{F(y,s)}M$. The restriction of $dF_{(y,s)}$ to the spatial subspace $T_y\Sigma_a$ coincides with the differential of the flow map $d(\Phi_s)_y$. Differentiating the level-set identity $f(F(y,s)) = a+s$ with respect to $y$ yields $df_{F(y,s)}(dF_{(y,s)}(v)) = 0$ for any $v \in T_y\Sigma_a$, confirming that $dF_{(y,s)}(T_y\Sigma_a) \subseteq T_{F(y,s)}\Sigma_{a+s}$. Since the flow of a smooth vector field is a local diffeomorphism, $d(\Phi_s)_y$ is an isomorphism; its restriction to the 2-dimensional subspace $T_y\Sigma_a$ is thus injective and, by dimension counting, maps onto the tangent plane $T_{F(y,s)}\Sigma_{a+s}$. In the temporal direction, $dF_{(y,s)}(\partial_s) = \nu_{F(y,s)}$, and the relation $df_{F(y,s)}(\nu) = |\nabla f|_g^2 = 1$ ensures that $\nu$ is transversal to the level set. Consequently, $dF_{(y,s)}$ maps the basis of the product space to a linearly independent set in $T_{F(y,s)}M$. Because both domain and codomain are 3-dimensional, $dF_{(y,s)}$ is an isomorphism, and $F$ is a local diffeomorphism by the inverse function theorem.

A bijective local diffeomorphism between smooth manifolds of the same dimension is a diffeomorphism. Therefore
$F:\Sigma_a\times(0,\infty)\xrightarrow{\cong}U_a$ is a diffeomorphism.

For each fixed $s>0$ define $F_s:=F(\,\cdot\,,s):\Sigma_a\to\Sigma_{a+s}$.  This map is injective and smooth, and its differential is the restriction of $dF$ to $T\Sigma_a$, which we already showed is an isomorphism onto $T\Sigma_{a+s}$. To prove surjectivity, let $x\in\Sigma_{a+s}$. Since $x\in U_a$, there are unique $y\in\Sigma_a$ and $\sigma>0$ such that $x=F(y,\sigma)$. Applying $f$ gives $a+\sigma=a+s$, so $\sigma=s$ and $x=F_s(y)$. Thus $F_s$ is bijective and locally diffeomorphic, hence a diffeomorphism.

Since \(U_a\) is connected as shown in Step~3 and \(F\) is a diffeomorphism, the product \(\Sigma_a \times (0,\infty)\) is connected. Hence \(\Sigma_a\) is connected. Because $F_s$ is a diffeomorphism for every $s>0$, each $\Sigma_t$ with $t>a$ is connected. Moreover, each $\Sigma_t$ ($t\ge a$) admits the global unit normal field $\nu=\nabla f$, so every such surface is orientable.

Finally, we identify the closure of \(U_a\) in \(M\) as \(\overline{U_a}^{\,M} = E_a\). Indeed, the inclusion \(\overline{U_a}^{\,M} \subseteq E_a\) follows from the closedness of \(E\) and the continuity of \(f\). Since \(U_a = \{x \in E : f(x) > a\}\), any limit point \(x\) must satisfy \(f(x) \ge a\), and hence \(x \in E_a\). Conversely, for any $x \in E_a$, we either have $f(x) > a$, in which case $x \in U_a$ trivially, or $f(x) = a$, implying $x \in \Sigma_a$. In the latter case, since $\Sigma_a \subset E^\circ$ and $\nu$ is smooth, the local flow $\Phi_s(x)$ is well-defined for sufficiently small $s > 0$. The identity $f(\Phi_s(x)) = a + s > a$ ensures that $\Phi_s(x) \in U_a$ for all $s \in (0, \epsilon)$. As $\Phi_s(x) \to x$ when $s \downarrow 0$, $x$ is a limit point of $U_a$. Thus $E_a \subseteq \overline{U_a}^M$, completing the identification 
\begin{equation}\label{closer}
\overline{U_a}^M = E_a.
\end{equation}

\textit{Step 5.} We compute the Jacobian and the mean curvature evolution on finite strips.
For $t\ge a$ define the shape operator, mean curvature, and auxiliary quantity by
\[
A_t(X):=\nabla_X\nu,\qquad H_t:=\operatorname{tr}(A_t),\qquad G_t:=H_t^2-|A_t|^2.
\]
For $s\ge 0$ write
\[
\overline{F}_s:=\overline{F}(\,\cdot\,,s):\Sigma_a\to\Sigma_{a+s}.
\]
Let $dA_t$ denote the area measure induced by $g$ on $\Sigma_t$. Define the Jacobian function
\[
J:\Sigma_a\times[0,\infty)\to(0,\infty)
\]
by
\[
\overline{F}_s^*(dA_{a+s})=J(\cdot,s)\,dA_a.
\]
Since $\overline{F}_0=\operatorname{id}_{\Sigma_a}$, we have $J(\cdot,0)=1$. For every $s>0$ the map $F_s$ is a diffeomorphism, so $J(\cdot,s)>0$. Set
\[
u:=\sqrt{J}\qquad\text{on }\Sigma_a\times[0,\infty).
\]
Since $\overline{F}_s$ is a diffeomorphism for all $s \ge 0$, the Jacobian $J$ is strictly positive. Consequently, $u = \sqrt{J}$ is a smooth function on the compact strip $\Sigma_a \times [0, T-a]$, and the subsequent differential identities involving $u$ are well-defined.
Denote by $A,H,G$ the pullbacks of $A_{a+s},H_{a+s},G_{a+s}$ via $\overline{F}$. Fix an arbitrary $T>a$. Then the restrictions of $\overline{F}$, $J$, $u$, $A$, $H$, and $G$ to the compact strip $\Sigma_a\times[0,T-a]$ are smooth, and all computations below hold on this strip.

Let $D$ denote the pullback connection on $\overline{F}^*TM$ over $\Sigma_a\times[0,T-a]$ induced by the Levi-Civita connection of $g$. Choose a coordinate patch $U\subset\Sigma_a$ with local coordinates $(x^1,x^2)$. On $U\times[0,T-a]$ define sections of $\overline{F}^*TM$ by
\[
X_i:=d\overline{F}(\partial_{x^i}),\qquad\nu:=d\overline{F}(\partial_s).
\]
Because each $\gamma_y$ is an integral curve of $\nabla f$, the section $\nu$ is precisely the pullback of the ambient vector field $\nabla f$ along $\overline{F}$. Since the Levi-Civita connection on $(M, g)$ is torsion-free, the pullback connection $D$ satisfies the identity $D_U(d\overline{F}(V)) - D_V(d\overline{F}(U)) = d\overline{F}([U, V])$ for any vector fields $U, V$ on the domain. Given that $\partial_s$ and $\partial_{x^i}$ commute, we obtain $D_{\partial_s}X_i = D_{\partial_{x^i}}\nu$.
Let
\[
\gamma_{ij}:=\langle X_i,X_j\rangle_g,\qquad h_{ij}:=\langle D_{\partial_{x^i}}\nu,X_j\rangle_g.
\]
Then $(\gamma_{ij})$ is the pullback of the induced metric on $\Sigma_{a+s}$ and $(h_{ij})$ is the pullback of the second fundamental form; moreover $H=\gamma^{ij}h_{ij}$. Since $\nu = \nabla f$, the components of the second fundamental form are given by: $h_{ij} = \nabla^2 f(\partial_{x^i}, \partial_{x^j}).$ Because the Hessian is a symmetric $(0, 2)$-tensor, we have $h_{ij} = h_{ji}$. In terms of the inner products, this implies:$$\langle D_{\partial_{x^i}} \nu, X_j \rangle_g = \langle X_i, D_{\partial_{x^j}} \nu \rangle_g.$$

The evolution of the metric coefficients is
\[
\partial_s\gamma_{ij}=\langle D_{\partial_s}X_i,X_j\rangle_g+\langle X_i,D_{\partial_s}X_j\rangle_g=\langle D_{\partial_{x^i}}\nu,X_j\rangle_g+\langle X_i,D_{\partial_{x^j}}\nu\rangle_g=2h_{ij}.
\]
The Jacobian satisfies
\[
J=\sqrt{\frac{\det(\gamma_{ij}(\cdot,s))}{\det(\gamma_{ij}(\cdot,0))}},
\]
so
$$\partial_s (\log J) = \frac{1}{2} \frac{\partial_s \det(\gamma_{ij}(\cdot,s))}{\det(\gamma_{ij}(\cdot,s))} = \frac{1}{2} \frac{\det(\gamma_{ij}(\cdot,s)) \cdot \gamma^{ij} \partial_s \gamma_{ij}}{\det(\gamma_{ij}(\cdot,s))} = \frac{1}{2} \gamma^{ij} \partial_s \gamma_{ij}=\gamma^{ij}h_{ij}=H,$$
and therefore
\begin{equation}\label{partial J}
\partial_sJ=HJ,\qquad\partial_su=  \frac{1}{2} J^{-1/2} \cdot \partial_s J=\frac12Hu.
\end{equation}
Next we compute the evolution of $H$. Differentiating the second fundamental form,
\[
\partial_sh_{ij}=\langle D_{\partial_s}D_{\partial_{x^i}}\nu,X_j\rangle_g+\langle D_{\partial_{x^i}}\nu,D_{\partial_s}X_j\rangle_g.
\]
Since $D_{\partial_s} \nu = \nabla_{d\overline{F}(\partial_s)} \nu = \nabla_\nu \nu=0$  and $[\partial_s,\partial_{x^i}]=0$, the curvature identity for the pullback connection reads
\[
D_{\partial_s}D_{\partial_{x^i}}\nu-D_{\partial_{x^i}}D_{\partial_s}\nu=R(\nu,X_i)\nu,
\]
so
\[
D_{\partial_s}D_{\partial_{x^i}}\nu=R(\nu,X_i)\nu.
\]
Moreover,
\[
D_{\partial_{x^i}} \nu
= \nabla_{d\overline{F}(\partial_{x^i})}\nu
= \nabla_{X_i}\nu
= A(X_i),
\qquad
D_{\partial_s} X_j
= D_{\partial_{x^j}}\nu
= A(X_j).
\]

Since
\[
\partial_s(\gamma^{ij}\gamma_{jk})
=(\partial_s\gamma^{ij})\gamma_{jk}
+\gamma^{ij}(\partial_s\gamma_{jk})
=\partial_s(\delta^i_k)=0,
\]
and
$\partial_s\gamma_{jk}=2h_{jk},$ it follows that
\[
(\partial_s\gamma^{ij})\gamma_{jk}
+\gamma^{ij}(2h_{jk})=0,
\]
hence
\[
\partial_s\gamma^{ij}
=-2\gamma^{ik}h_{k\ell}\gamma^{\ell j}.
\]
Therefore
\[
\partial_sH=(\partial_s\gamma^{ij})h_{ij}+\gamma^{ij}\partial_sh_{ij}=-2\gamma^{ik}h_{k\ell}\gamma^{\ell j}h_{ij}+\gamma^{ij}\langle R(\nu,X_i)\nu,X_j\rangle_g+\gamma^{ij}\langle A(X_i),A(X_j)\rangle_g.
\]
The first term, $-2\gamma^{ik}h_{k\ell}\gamma^{\ell j}h_{ij},$
represents the variation of the inverse metric $\gamma^{ij}$ contracted with the second fundamental form $h_{ij}$. Raising indices, the tensor $\gamma^{ik}h_{k\ell}$ can be identified with the $(1,1)$-tensor $A^{i}{}_{\ell}$, while $\gamma^{\ell j}h_{ij}$ corresponds to $A^{\ell}{}_{i}$. Hence the above expression equals
$-2\operatorname{tr}(A^2).$ The third term,
$\gamma^{ij}\langle A(X_i),A(X_j)\rangle_g,$
contracts to
$\operatorname{tr}(A^2).$
 Since $|A|^2=\operatorname{tr}(A^2)$ for symmetric operators, these contributions combine to give $-|A|^2,$
which governs the extrinsic part of the evolution equation for the mean curvature.
For the curvature term, our convention yields
\[
\gamma^{ij} \langle R(\nu, X_i) \nu, X_j \rangle_g = -\gamma^{ij} \langle R(X_i, \nu) \nu, X_j \rangle_g=-\mathrm{Ric}_g(\nu,\nu).
\]
 Hence
\begin{equation}\label{Mean curvature}
\partial_sH+|A|^2+\mathrm{Ric}_g(\nu,\nu)=0.
\end{equation}
Differentiating $\partial_su=\frac12Hu$ once more and substituting the evolution equation for $H$ gives
\[
\partial_s^2u=\Bigl(\frac12\partial_sH+\frac14H^2\Bigr)u=\Bigl(-\frac12|A|^2-\frac12\mathrm{Ric}_g(\nu,\nu)+\frac14H^2\Bigr)u.
\]
If $\lambda_1,\lambda_2$ are the principal curvatures of $\Sigma_{a+s}$, then $H=\lambda_1+\lambda_2$ and $|A|^2=\lambda_1^2+\lambda_2^2$, so
\[
-\frac12|A|^2+\frac14H^2=-\frac14(\lambda_1-\lambda_2)^2.
\]
Therefore, $\mathrm{Ric}_g\geq 0$ and $u>0$ imply that
\[
\partial_s^2u=-\frac12\mathrm{Ric}_g(\nu,\nu)u-\frac14(\lambda_1-\lambda_2)^2u\le 0
\]
on $\Sigma_a\times[0,T-a]$. Since $T>a$ was arbitrary, for each fixed $y\in\Sigma_a$ the function $s\mapsto u(y,s)$ is $C^2$ on $[0,\infty)$ and satisfies $\partial_s^2u(y,s)\le 0$ on every finite interval. Hence $u(y,\cdot)$ is concave on $[0,\infty)$.

We claim that $\partial_su(y,s)\ge 0$ for all $(y,s)\in\Sigma_a\times[0,\infty)$. Suppose for contradiction that $\partial_su(y,s_0)<0$ for some $s_0\ge 0$. Concavity of $u(y,\cdot)$ implies
\[
u(y,s)\le u(y,s_0)+\partial_su(y,s_0)(s-s_0)\qquad\text{for all }s\ge s_0.
\]
The right-hand side is negative for all sufficiently large $s$, contradicting $u>0$. Hence $\partial_su\ge 0$ everywhere. Since $u>0$ and $\partial_su=\frac12Hu$, it follows that
\[
H\ge 0\qquad\text{on }\Sigma_a\times[0,\infty),
\]
and in particular $H_t\ge 0$ on $\Sigma_t$ for every $t\ge a$.

Because $u(y,\cdot)$ is concave, the derivative $\partial_su(y,\cdot)$ is nonincreasing. Combined with $\partial_su\ge 0$,
\[
0\le\partial_su(y,s)\le\partial_su(y,0)\qquad\text{for all }(y,s)\in\Sigma_a\times[0,\infty).
\]
Since $u(\cdot,0)=1$, $\partial_su(y,0)=\frac12H_a(y).$
Consequently, we have the bounds$$0 \le \partial_s u(y,s) \le \frac{1}{2} H_a(y).$$On the two-dimensional surface $\Sigma_{a+s}$, the Cauchy–Schwarz inequality applied to the principal curvatures $\kappa_1, \kappa_2$ implies $|A|^2 \ge \frac{1}{2} H^2$, with equality if and only if the point is umbilical. It follows from the Gauss equation that$$G = H^2 - |A|^2 \le \frac{1}{2} H^2.$$Multiplying by the Jacobian $J$ and substituting the identity $2(\partial_s u)^2 = \frac{1}{2} H^2 J$, we obtain the pointwise estimate$$(G \circ F) J \le \frac{1}{2} H^2 J = 2 (\partial_s u)^2 \le \frac{1}{2} H_a^2.$$
Define the constants
\[
C_G(a):=\frac12\int_{\Sigma_a} H_a^2\, dA_a,
\qquad
C_H(a):=\int_{\Sigma_a} H_a\, dA_a.
\]
These quantities are finite because $\Sigma_a$ is compact and $H_a$ is smooth.
Let
\[
E_{a,T}:=f^{-1}([a,T]),\qquad U_{a,T}:=f^{-1}((a,T)).
\]
The properness of $f$ ensures that the region $E_{a,T}$ is compact. Its interior, $U_{a,T}$, differs from $E_{a,T}$ only by the boundary $\partial E_{a,T} = \Sigma_a \cup \Sigma_T$. As these level sets are smooth embedded hypersurfaces of codimension one, they constitute a set of measure zero with respect to the Riemannian volume form $dV_g$. Consequently, for any integrable function $\varphi$ on $E_{a,T}$, we have:$$\int_{E_{a,T}} \varphi \, dV_g = \int_{U_{a,T}} \varphi \, dV_g.
$$

The restriction of $F$ yields a diffeomorphism from $\Sigma_a \times (0, T-a)$ onto $U_{a,T}$. The identity $f \circ F = a + s$ ensures that the image of this product domain is contained within $U_{a,T}$, while the global diffeomorphism property of $F$ guarantees that every point in $U_{a,T}$ is uniquely represented. Under this map, the volume form pulls back according to the identity$$F^*(dV_g) = J \, dA_a \, ds.$$ Indeed, this follows because $\partial_s F = \nu$ is a unit vector field that remains orthogonal to the level sets $\Sigma_{a+s}$, which are themselves the images of the tangent spaces of $\Sigma_a$ under the flow. Therefore
\begin{equation}\label{G}
\begin{aligned}
\int_{E_{a,T}} G\,dV_g
&= \int_{U_{a,T}} G\, dV_g
= \int_{\Sigma_a \times (0, T-a)} (G \circ F)\, F^*(dV_g)  \\
&= \int_{\Sigma_a}\int_0^{T-a} (G \circ F)J\,ds\,dA_a
\le \int_{\Sigma_a}\int_0^{T-a}\tfrac12 H_a^2\,ds\,dA_a
= C_G(a)(T-a).
\end{aligned}
\end{equation}

Next observe that,  using \eqref{partial J},
\[
\partial_s(HJ)=(\partial_sH)J+H(\partial_sJ)=(\partial_sH+H^2)J=(G-\mathrm{Ric}_g(\nu,\nu))J.
\]
Integrating over $\Sigma_a\times(\varepsilon,T-a)$ for $\varepsilon\in(0,T-a)$ yields
\[
\int_{\Sigma_a}\int_\varepsilon^{T-a}(\mathrm{Ric}_g(\nu,\nu)-G)J\,ds\,dA_a=\int_{\Sigma_a}H(\cdot,\varepsilon)J(\cdot,\varepsilon)\,dA_a-\int_{\Sigma_a}H(\cdot,T-a)J(\cdot,T-a)\,dA_a.
\]
All quantities are continuous on the compact strip $\Sigma_a\times[0,T-a]$. Taking the limit as $\varepsilon \downarrow 0$ and invoking the dominated convergence theorem, justified by the continuity of the integrand and the compactness of the region, we obtain$$\int_{U_{a,T}} \left( \mathrm{Ric}_g(\nu,\nu) - G \right) dV_g = \int_{\Sigma_a} H_a \, dA_a - \int_{\Sigma_a} H(\cdot, T-a) J(\cdot, T-a) \, dA_a.$$We may replace the domain of integration $U_{a,T}$ with its closure $E_{a,T}$ as the boundary is a set of measure zero. Since the flow map $F_{T-a} : \Sigma_a \to \Sigma_T$ is a diffeomorphism, the curvature and the area element transform as$$F_{T-a}^* H_T = H(\cdot, T-a) \quad \text{and} \quad F_{T-a}^* dA_T = J(\cdot, T-a) \, dA_a.$$By the naturality of the pullback, it follows that$$F_{T-a}^*(H_T \, dA_T) = H(\cdot, T-a) J(\cdot, T-a) \, dA_a,$$which implies the integral identity$$\int_{\Sigma_T} H_T \, dA_T = \int_{\Sigma_a} F_{T-a}^*(H_T \, dA_T) = \int_{\Sigma_a} H(\cdot, T-a) J(\cdot, T-a) \, dA_a.$$

Since $H_T\ge 0,$ we obtain
$$
\int_{E_{a,T}}(\mathrm{Ric}_g(\nu,\nu)-G)\,dV_g\le C_H(a).
$$
Adding the two shell estimates and using \eqref{G}, we obtain
\begin{equation}\label{Ric}
\int_{E_{a,T}} \mathrm{Ric}_g(\nu,\nu)\, dV_g
\le C_G(a)(T-a) + C_H(a).
\end{equation}

\textit{Step 6.} We estimate the scalar curvature integral over the shell.
Let $K_t$ denote the Gaussian curvature of the surface $\Sigma_t$. In dimension $3$, the Gauss equation implies:$$K_t = \sec_g(e_1, e_2) + \det(A_t).$$ Substituting the identity $\det(A_t) = \frac{1}{2} G_t$ and the Ricci trace $$\mathrm{Sc}_g = 2\sec_g(e_1, e_2) + 2\mathrm{Ric}_g(\nu, \nu),$$ we eliminate the sectional curvature term to obtain the identity:$$\mathrm{Sc}_g = 2K_t + 2\mathrm{Ric}_g(\nu, \nu) - G_t \qquad \text{on } \Sigma_t.$$

On $U_{a,T}$ every point is of the form $F(y,s)$ with $0<s<T-a$, so it lies on $\Sigma_{a+s}$. Therefore
\[
\mathrm{Sc}_g(F(y,s))=2K_{a+s}(F(y,s))+2\mathrm{Ric}_g(\nu,\nu)(F(y,s))-G(F(y,s)).
\]
Multiplying by $J(y,s)$ and integrating over $\Sigma_a\times(0,T-a)$, with $F^*(dV_g)=J\,dA_a\,ds$, yields
\[
\int_{U_{a,T}}\mathrm{Sc}_g\,dV_g=2\int_{\Sigma_a}\int_0^{T-a}K_{a+s}(F(y,s))J\,ds\,dA_a+\int_{U_{a,T}}(\mathrm{Ric}_g(\nu,\nu)-G)\,dV_g+\int_{U_{a,T}}\mathrm{Ric}_g(\nu,\nu)\,dV_g.
\]
For each fixed $s \in (0, T-a)$, the diffeomorphism $F_s : \Sigma_a \to \Sigma_{a+s}$ preserves the Euler characteristic, so $\chi(\Sigma_{a+s}) = \chi(\Sigma_a)$. By the Gauss-Bonnet theorem and the change of variables formula:$$\int_{\Sigma_{a+s}} K_{a+s} \, dA_{a+s} = \int_{\Sigma_a} K_{a+s}(F(y,s)) J(y,s) \, dA_a = 2\pi \chi(\Sigma_a).$$Integrating this identity over the flow parameter $s \in [0, T-a]$ yields:$$2 \int_0^{T-a} \int_{\Sigma_a} K_{a+s}(F(y,s)) J(y,s) \, dA_a \, ds = 2 \int_0^{T-a} 2\pi \chi(\Sigma_a) \, ds = 4\pi \chi(\Sigma_a)(T-a).$$
Since $\Sigma_a$ is a connected, closed, orientable surface, as shown in Step~4, we have $\chi(\Sigma_a)\le 2$. Hence
\[
4\pi \chi(\Sigma_a)(T-a)\le 8\pi (T-a).
\]
Extending the integrals to $E_{a,T}$ and using \eqref{Ric}, we obtain
\[
\int_{E_{a,T}} \mathrm{Sc}_g \, dV_g
\le (8\pi + C_G(a))(T-a) + 2C_H(a)
\qquad \text{for all } T>a .
\]
\textit{Step 7.} We compare with metric balls to finish the proof.
We first prove that
\[
f(x)-a=d_g(x,C_a)\qquad\text{for all }x\in U_a.
\]
 To prove the inequality $d_g(x, C_a) \le f(x) - a$, let $x$ be an arbitrary point in $U_a$. By the surjectivity of the flow map $F$, there exist $y \in \Sigma_a$ and $s > 0$ such that $x = F(y, s)$, which implies $f(x) = a + s$. The integral curve $\gamma_y : [0, s] \to M$ defined by $\gamma_y(\tau) = F(y, \tau)$ is a unit-speed path connecting $y \in \Sigma_a \subseteq C_a$ to $x$. Since the length of this curve is exactly $s = f(x) - a$, it follows from the definition of the distance function to a set that $d_g(x, C_a) \le f(x) - a$.
 
For the reverse inequality, let $c:[0,b]\to M$ be any piecewise $C^1$ curve with $c(0)=x$ and $c(b)\in C_a$. Define
$\tau:=\inf\{s\in[0,b]:c(s)\in C_a\}.$ Since $C_a$ is closed and $c$ is continuous, $c(\tau)\in C_a$. For $0\le s<\tau$ we have $c(s)\notin C_a$, so $c(s)\in U_a$. Thus $c([0,\tau))\subset U_a$. By continuity and recalling \eqref{closer}, namely that $\overline{U_a}^{\,M}=E_a$, we obtain
$c(\tau)\in E_a .$
Therefore
\[
c(\tau)\in E_a\cap C_a=E_a\cap f^{-1}([0,a])=f^{-1}(a)=\Sigma_a\subset E^\circ.
\]
Hence $c([0,\tau])\subset E^\circ$ and $f(c(\tau))=a$. Let
$0=t_0<t_1<\cdots<t_N=\tau$
be a partition such that $c$ is $C^1$ on each $[t_{j-1},t_j]$. Since $c([0,\tau])\subset E^\circ$, the image segment $c([0,\tau])$ is compact in $E^\circ$. The restriction of $f$ to an open neighborhood of this compact set is smooth (hence locally Lipschitz), so $f\circ c$ is absolutely continuous on each $[t_{j-1},t_j]$ and the chain rule
\[
\frac{d}{ds}(f\circ c)(s)=df_{c(s)}(c'(s))
\]
holds for almost every $s\in[t_{j-1},t_j]$. Since $|\nabla f|_g=1$ on $E^\circ$, for each $j$, we obtain
\[
f(c(t_{j-1}))-f(c(t_j))=-\int_{t_{j-1}}^{t_j}df_{c(s)}(c'(s))\,ds\le\int_{t_{j-1}}^{t_j}|\nabla f|_g\,|c'(s)|_g\,ds=\int_{t_{j-1}}^{t_j}|c'(s)|_g\,ds.
\]
 Summing over $j=1,\dots,N$, we have 
\[
f(x)-f(c(\tau))\le\int_0^\tau|c'(s)|_g\,ds\le L_g(c).
\]
Since $f(c(\tau))=a$, we get $f(x)-a\le L_g(c).$
Taking the infimum over all piecewise $C^1$ curves from $x$ to $C_a$ yields $f(x)-a\le d_g(x,C_a)$. Combined with the opposite inequality, we prove that 
\[
f(x)-a=d_g(x,C_a)\qquad\text{for all }x\in U_a.
\]

Now fix $p\in M$ and $r>0$. The set $C_a$ is compact, so $d_g(p,C_a)<\infty$. Let $x\in B_p(r)\cap U_a$. By the triangle inequality for distance to a set,
\[
f(x)-a=d_g(x,C_a)\le d_g(x,p)+d_g(p,C_a)<r+d_g(p,C_a).
\]
Hence
\[
f(x)<a+r+d_g(p,C_a),
\]
so
\[
B_p(r)\cap U_a\subset E_{a,\,a+r+d_g(p,C_a)}.
\]
Since $\mathrm{Sc}_g\ge 0$ on $M$,
\[
\int_{B_p(r)} \mathrm{Sc}_g \, dV_g = \int_{B_p(r) \cap C_a} \mathrm{Sc}_g \, dV_g + \int_{B_p(r) \cap U_a} \mathrm{Sc}_g \, dV_g\le\int_{C_a}\mathrm{Sc}_g\,dV_g+\int_{E_{a,\,a+r+d_g(p,C_a)}}\mathrm{Sc}_g\,dV_g.
\]
Define
\[
C_0(a):=\int_{C_a}\mathrm{Sc}_g\,dV_g,
\]
which is finite since $C_a$ is compact and $\mathrm{Sc}_g$ is continuous. Apply the shell estimate of Step 6 with
\[
T=a+r+d_g(p,C_a)>a:
\]
\[
\int_{E_{a,\,a+r+d_g(p,C_a)}}\mathrm{Sc}_g\,dV_g\le\bigl(8\pi+C_G(a)\bigr)(r+d_g(p,C_a))+2C_H(a).
\]
Therefore
\[
\int_{B_p(r)}\mathrm{Sc}_g\,dV_g\le C_0(a)+(8\pi+C_G(a))r+\bigl(8\pi+C_G(a)\bigr)d_g(p,C_a)+2C_H(a).
\]
Set
\[
A_{p,a}:=C_0(a)+\bigl(8\pi+C_G(a)\bigr)d_g(p,C_a)+2C_H(a),\qquad B_a:=8\pi+C_G(a).
\]
Then
\[
\int_{B_p(r)}\mathrm{Sc}_g\,dV_g\le A_{p,a}+B_a r\qquad\text{for all }r>0.
\]
Dividing by $r$ and taking the $\limsup$ as $r\to\infty$ yields
\[
\limsup_{r\to\infty}\frac{1}{r}\int_{B_p(r)}\mathrm{Sc}_g\,dV_g\le B_a=8\pi+C_G(a)<\infty.
\]
Since $p\in M$ and $a>0$ were arbitrary, the theorem is proved.
\end{proof}

\begin{remark}
The identity $f(x) - a = d_g(x, C_a)$ for $x \in U_a$ shows that $f$ coincides with the Riemannian distance function to the set $C_a$ along the integral curves of $\nu$. However, we cannot choose $a$ such that the level set $\Sigma_a$ is minimal. Indeed, a theorem of Meeks, Simon, and Yau \cite[Theorem~6]{zbMATH03824612} states that if a complete noncompact orientable $3$-manifold $N$ with nonnegative Ricci curvature contains a compact embedded minimal surface $\Sigma$, then $N$ is isometric to a product $\Sigma \times \mathbb{R}$. Since $M \cong \mathbb{R}^3$  has only one end, such an isometric splitting is topologically precluded. Consequently, the mean curvature $H_a$ cannot vanish identically, and the boundary term $C_G(a)$ must be accounted for in the integral estimate. Furthermore, in our general theorem one only obtains the finiteness of $C_G(a)$ from the compactness of $\Sigma_a$, not a uniform bound $C_G(a) \le 8\pi$.
\end{remark}

\begin{proof}[Proof of Theorem~\ref{3D Cohn}]
Proposition~\ref{avr} implies Theorem~\ref{3D Cohn}~(I). For the proof of Theorem~\ref{3D Cohn}~(II), we use the theorem of Schoen--Yau--Liu. If \(M\) is not diffeomorphic to \(\mathbb{R}^3\), then Proposition~\ref{codim2-splitting-Sc} implies that \(\mathcal{C}(g)\) is finite. If \(M\) is diffeomorphic to \(\mathbb{R}^3\), then Proposition~\ref{level set} implies Theorem~\ref{3D Cohn}~(II).
\end{proof}

\section{Weighted Cohn--Vossen-Type Inequalities}\label{weighted}
This section extends the Cohn--Vossen-type inequalities
from Riemannian manifolds to weighted Riemannian manifolds.

The $m$-Bakry--Émery  curvature on a weighted Riemannian manifold $  (M^n,g,e^{-f}\,dV_g)  $ is defined by
$$\mathrm{Ric}_{f,m} := \mathrm{Ric}_g + \nabla^2 f - \frac{1}{m}\,df \otimes df, \qquad m \in (0,\infty],$$
where the last term is omitted when $m=\infty $. This tensor plays a central role in the development of synthetic notions of Ricci curvature on 
RCD spaces. For the definition of RCD spaces, see \cite{zbMATH07740333}.

 Motivated by this circle of ideas, the author introduces  the weighted scalar curvature 
\[
  \mathrm{Sc}_{\alpha,\beta}
  :=
  \mathrm{Sc}_g+\alpha\Delta f-\beta|\nabla f|^2,
\]
where $\alpha,\beta\in\mathbb R$, and studies its properties on weighted
Riemannian manifolds in
\cite[Section~4]{zbMATH07342230}.

Throughout, $(M^n,g)$ is complete, connected, noncompact, and $n>2$.
We fix a base point $p\in M$ and write $B_r:=B_g(p,r)$.
The Laplacian convention is $\Delta=\operatorname{div}\nabla$, equivalently
$\Delta f=\operatorname{tr}_g(\nabla^2 f)$, so that for every compactly
supported Lipschitz function $\psi$ and every $f\in C^\infty(M)$,
\[
  \int_M \psi\,\Delta f\,dV_g
  =
  -\int_M \langle\nabla\psi,\nabla f\rangle\,dV_g.
\]
With this sign convention, $\mathrm{Sc}_{\alpha,\beta}$ agrees with the
weighted scalar curvature introduced in~\cite[Section~4]{zbMATH07342230}.
The arguments below use only the displayed formula for
$\mathrm{Sc}_{\alpha,\beta}$.

We study complete smooth Riemannian metrics $g$ on connected smooth manifolds
$M^n$ ($n\geq 3$) for which Yau's problem admits a positive answer.
Precisely, we assume
\begin{equation}\label{eq:base-assumptions}
  \mathrm{Ric}_g\geq 0,
  \qquad
  A_p:=\limsup_{r\to\infty}
  r^{2-n}\int_{B(p,r)}\mathrm{Sc}_g\,dV_g<\infty.
\end{equation}
Since $\mathrm{Ric}_g\ge 0$, one has $\mathrm{Sc}_g\ge 0$, so for every
$\varepsilon>0$ there exists $r_\varepsilon$ such that
\begin{equation}\label{eq:scalar-growth-eps}
  \int_{B_r}\mathrm{Sc}_g\,dV_g
  \le (A_p+\varepsilon)\,r^{n-2}
  \qquad(r\ge r_\varepsilon).
\end{equation}
All estimates are asserted for sufficiently large $r$; compact subsets play
no role in the asymptotic bounds. For convenience, we introduce  notation for the trace of the Bakry--\'Emery 
tensor, which will be used throughout the estimates:
\[
  Q_m := \operatorname{tr}_g(\operatorname{Ric}_{f,m})
  = \mathrm{Sc}_g + \Delta f - \frac{1}{m}|\nabla f|^2
\]
for $m<\infty$, and similarly $Q_\infty := \operatorname{tr}_g(\operatorname{Ric}_{f,\infty}) = \mathrm{Sc}_g + \Delta f$ for $m=\infty$.
Note that the assumption $\operatorname{Ric}_{f,m} \geq 0$ trivially implies $Q_m \geq 0$.

\begin{theorem}[Finite-dimensional Bakry--\'Emery tensor]
\label{thm:finite-m}
Let $(M^n,g,e^{-f}\,dV_g)$ ($n\geq 3$) be a weighted Riemannian manifold
satisfying $\operatorname{Ric}_{f,m}\geq 0$ for some $m<\infty$, and suppose
$g$ satisfies~\eqref{eq:base-assumptions}.
Then for every $\alpha,\beta\in\mathbb{R}$ and every $\varepsilon>0$ there
exists $r_0=r_0(\varepsilon)$ such that for all $r\geq r_0$,
\begin{equation}\label{eq:finite-absolute-main}
  \int_{B_g(p,r)}|\operatorname{Sc}_{\alpha,\beta}|\,dV_g
  \leq C(n,m,\alpha,\beta)(A_p+\varepsilon+1)\,r^{n-2}.
\end{equation}
\end{theorem}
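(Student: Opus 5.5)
We follow the localization scheme sketched in the introduction. Fix a cutoff $\phi_r$ with $\phi_r\equiv1$ on $B_r$, support in $B_{2r}$, and $|\nabla\phi_r|\le 2/r$; for instance $\phi_r(x)=\min\{1,\max\{0,2-\rho(x)/r\}\}$, which is Lipschitz. Bishop--Gromov gives $\mathrm{Vol}_g(B_{2r})\le \omega_n (2r)^n$, so
\[
  \int_M|\nabla\phi_r|^2\,dV_g\le 4r^{-2}\,\omega_n(2r)^n=C(n)\,r^{n-2}.
\]
By \eqref{eq:scalar-growth-eps} applied at radius $2r$, for $r\ge r_\varepsilon$ we also have
\[
  \int_M\phi_r^2\,\mathrm{Sc}_g\,dV_g\le \int_{B_{2r}}\mathrm{Sc}_g\,dV_g\le 2^{n-2}(A_p+\varepsilon)\,r^{n-2}.
\]
Both basic quantities are therefore $O\bigl((A_p+\varepsilon+1)r^{n-2}\bigr)$.

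\textbf{Coercive estimate.} Since $\operatorname{Ric}_{f,m}\ge0$, the trace satisfies $Q_m=\mathrm{Sc}_g+\Delta f-\frac1m|\nabla f|^2\ge0$. Multiply by $\phi_r^2$ and integrate, using the integration-by-parts convention stated above:
\[
  \int\phi_r^2 Q_m+\frac1m\int\phi_r^2|\nabla f|^2=\int\phi_r^2\mathrm{Sc}_g-2\int\phi_r\langle\nabla\phi_r,\nabla f\rangle .
\]
Apply Young's inequality to the last term:
\[
  2\Bigl|\int\phi_r\langle\nabla\phi_r,\nabla f\rangle\Bigr|\le \frac1{2m}\int\phi_r^2|\nabla f|^2+2m\int|\nabla\phi_r|^2 .
\]
Absorbing the first piece on the left, and using $Q_m\ge0$ so that each left-hand term is nonnegative, we obtain
\[
  \int\phi_r^2 Q_m+\frac1{2m}\int\phi_r^2|\nabla f|^2\le \int\phi_r^2\mathrm{Sc}_g+2m\int|\nabla\phi_r|^2\le C(n,m)(A_p+\varepsilon+1)r^{n-2}.
\]
This bounds both $\int_{B_r}|\nabla f|^2$ and $\int_{B_r}Q_m$ (with $Q_m\ge0$) by $C(n,m)(A_p+\varepsilon+1)r^{n-2}$.

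\textbf{Algebraic reduction.} Substituting $\Delta f=Q_m-\mathrm{Sc}_g+\frac1m|\nabla f|^2$ gives the identity
\[
  \mathrm{Sc}_{\alpha,\beta}=(1-\alpha)\mathrm{Sc}_g+\alpha Q_m+\Bigl(\frac{\alpha}{m}-\beta\Bigr)|\nabla f|^2 .
\]
Hence, pointwise,
\[
  |\mathrm{Sc}_{\alpha,\beta}|\le|1-\alpha|\,\mathrm{Sc}_g+|\alpha|\,Q_m+\Bigl|\frac{\alpha}{m}-\beta\Bigr|\,|\nabla f|^2 .
\]
All three terms on the right are nonnegative, so integrating over $B_r$ and using $\phi_r=1$ there, together with the coercive estimate and \eqref{eq:scalar-growth-eps}, yields \eqref{eq:finite-absolute-main} with $C(n,m,\alpha,\beta)=|1-\alpha|+(|\alpha|+2m|\alpha/m-\beta|)C(n,m)$, for all $r\ge r_0:=r_\varepsilon$ (adjusted so that the bound at radius $2r$ applies).

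\textbf{Main obstacle.} The delicate point is the coercive step. The test function must be $\phi_r^2$, not $\phi_r$, so that the cross term can be absorbed by $\int\phi_r^2|\nabla f|^2$. One must also check that $\phi_r$, being only Lipschitz, is admissible in the integration by parts; this follows from the stated convention for compactly supported Lipschitz test functions. Finally, the sign $Q_m\ge0$ is essential to discard that term when extracting the $|\nabla f|^2$ bound; this is exactly where finiteness of $m$ enters, and why the argument fails for $m=\infty$.
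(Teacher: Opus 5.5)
Your proposal is correct and follows essentially the same route as the paper's proof: test the traced inequality $Q_m\ge 0$ against $\phi_r^2$, integrate by parts, absorb the cross term by Young's inequality, and insert the resulting $L^1$ bounds for $|\nabla f|^2$ and $Q_m$ into the identity $\mathrm{Sc}_{\alpha,\beta}=(1-\alpha)\mathrm{Sc}_g+\alpha Q_m+(\alpha/m-\beta)|\nabla f|^2$. The only difference is cosmetic. By keeping $\int\phi_r^2Q_m$ on the left, you obtain both bounds from a single inequality, whereas the paper first discards $Q_m$ to bound $I_r$ and then bounds $\int_{B_r}Q_m$ with a second Cauchy--Schwarz step that uses that bound.
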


The strategy of the proof relies on localizing the problem using families of smooth cutoff functions and applying integration by parts. By tracing the Bakry--\'Emery condition $\operatorname{Ric}_{f,m}\geq 0$, we establish coercive $L^1$ bounds on both the gradient $|\nabla f|^2$ and the scalar curvature deficit $Q_m$. Substituting these bounds into the algebraic decomposition of $\mathrm{Sc}_{\alpha,\beta}$ then yields the desired absolute integral estimate.

\begin{proof}
Choose $\eta\in C^\infty([0,\infty))$ with $0\le\eta\le 1$,
$\eta\equiv 1$ on $[0,1]$, $\eta\equiv 0$ on $[3/2,\infty)$, and
$|\eta'|\le C$, and set
\[
  \phi_r(x):=\eta\!\left(\frac{d_g(p,x)}{r}\right).
\]
Since the distance function is $1$-Lipschitz, $\phi_r$ is compactly supported
(Hopf--Rinow), satisfies $\phi_r\equiv 1$ on $B_r$,
$\operatorname{supp}\phi_r\subset\overline B_{3r/2}\subset B_{2r}$, and
$|\nabla\phi_r|\le C/r$ a.e.
Define
\[
  I_r:=\int_M\phi_r^2|\nabla f|^2\,dV_g,\quad
  S_r:=\int_M\phi_r^2\mathrm{Sc}_g\,dV_g,\quad
  J_r:=\int_M|\nabla\phi_r|^2\,dV_g.
\]
For $2r\ge r_\varepsilon$, using~\eqref{eq:scalar-growth-eps} and the
Bishop--Gromov bound $\operatorname{Vol}_g(B_{2r})\le\omega_n(2r)^n$,
\begin{equation}\label{eq:SJ-growth}
  S_r\le C_1(n)(A_p+\varepsilon)\,r^{n-2},
  \qquad
  J_r\le C_2(n)\,r^{n-2}.
\end{equation}
Since $\phi_r^2\in W^{1,\infty}_c(M)$, integration by parts gives
\begin{equation}\label{eq:weak-ibp-cutoff}
  \int_M\phi_r^2\,\Delta f\,dV_g
  =-2\int_M\phi_r\langle\nabla\phi_r,\nabla f\rangle\,dV_g.
\end{equation}

Since $\operatorname{Ric}_{f,m}\ge 0$ implies $Q_m\ge 0,$ we have
 $m^{-1}|\nabla f|^2\le\mathrm{Sc}_g+\Delta f$.  
Multiplying both sides by $\phi_r^2$ and integrating over $M$ gives
\[
  \frac{1}{m}\int_M \phi_r^2|\nabla f|^2\,dV_g
  \le \int_M \phi_r^2\,\mathrm{Sc}_g\,dV_g
    + \int_M \phi_r^2\,\Delta f\,dV_g
  = S_r + \int_M \phi_r^2\,\Delta f\,dV_g.
\]
Applying the integration by parts formula~\eqref{eq:weak-ibp-cutoff} to the
last term,
\[
  \frac{1}{m}\,I_r
  \le S_r - 2\int_M \phi_r\langle\nabla\phi_r,\nabla f\rangle\,dV_g
  \le S_r + 2\Bigl|\int_M \phi_r\langle\nabla\phi_r,\nabla f\rangle\,dV_g\Bigr|.
\]
By the Cauchy--Schwarz inequality,
\[
  \Bigl|\int_M \phi_r\langle\nabla\phi_r,\nabla f\rangle\,dV_g\Bigr|
  \le \left(\int_M \phi_r^2|\nabla f|^2\,dV_g\right)^{1/2}
     \left(\int_M |\nabla\phi_r|^2\,dV_g\right)^{1/2}
  = I_r^{1/2}\,J_r^{1/2}.
\]
Hence
\[
  \frac{1}{m}\,I_r \le S_r + 2\,I_r^{1/2}\,J_r^{1/2}.
\]
Since  $2\,I_r^{1/2}\,J_r^{1/2}
\le \tfrac{1}{2m}\,I_r + 2m\,J_r$, we have 
\[
  \frac{1}{m}\,I_r
  \le S_r + \frac{1}{2m}\,I_r + 2m\,J_r.
\]
Subtracting $\tfrac{1}{2m}I_r$ from both sides and multiplying through by $2m$,
\[
  I_r \le 2m\,S_r + 4m^2 J_r.
\]
Substituting the bounds from~\eqref{eq:SJ-growth},
\begin{align}
  I_r
  &\le 2m \cdot C_1(n)(A_p+\varepsilon)\,r^{n-2}
     + 4m^2 \cdot C_2(n)\,r^{n-2} \notag\\
  &= \bigl[2m\,C_1(n)(A_p+\varepsilon) + 4m^2 C_2(n)\bigr]r^{n-2}. \notag
\end{align}
Since $(A_p+\varepsilon) \le (A_p+\varepsilon+1)$ and $1 \le (A_p+\varepsilon+1)$,
both terms are absorbed by setting
$C(n,m) := 2m\,C_1(n)+4m^2 C_2(n)$, giving
\begin{equation}\label{eq:finite-grad-bound}
  I_r \le 2m\,S_r + 4m^2 J_r
     \le C(n,m)(A_p+\varepsilon+1)\,r^{n-2}.
\end{equation}
Since $\phi_r \equiv 1$ on $B_r$, this proves~\eqref{eq:finite-grad-bound}
for $\int_{B_r}|\nabla f|^2\,dV_g$.
Since $Q_m \ge 0$ and $\phi_r \equiv 1$ on $B_r$, extending the domain
of integration and expanding the definition of $Q_m$,
\[
  \int_{B_r} Q_m\,dV_g
  = \int_{B_r}\phi_r^2\,Q_m\,dV_g
  \le \int_M \phi_r^2\,Q_m\,dV_g
  = S_r
    - 2\int_M\phi_r\langle\nabla\phi_r,\nabla f\rangle\,dV_g
    - \frac{1}{m}\,I_r,
\]
where~\eqref{eq:weak-ibp-cutoff} was used for the $\Delta f$ term.
Discarding $-\tfrac{1}{m}I_r \le 0$ and applying Cauchy--Schwarz,
\[
  \int_{B_r} Q_m\,dV_g
  \le S_r + 2\,I_r^{1/2}J_r^{1/2}.
\]
 It remains to bound $I_r^{1/2}J_r^{1/2}$. By~\eqref{eq:finite-grad-bound}
and the bound $J_r \le C_2(n)\,r^{n-2}$ from~\eqref{eq:SJ-growth},
\[
  I_r^{1/2}\,J_r^{1/2}
  \le \bigl[C(n,m)(A_p+\varepsilon+1)\,r^{n-2}\bigr]^{1/2}
     \cdot \bigl[C_2(n)\,r^{n-2}\bigr]^{1/2}
  = C(n,m)^{1/2}\,C_2(n)^{1/2}\,(A_p+\varepsilon+1)^{1/2}\,r^{n-2}.
\]
Since $(A_p+\varepsilon+1)^{1/2} \le (A_p+\varepsilon+1)$, we obtain
\[
  2\,I_r^{1/2}\,J_r^{1/2}
  \le C'(n,m)(A_p+\varepsilon+1)\,r^{n-2},
\]
where $C'(n,m) := 2\,C(n,m)^{1/2}C_2(n)^{1/2}$.
Substituting back and applying $S_r \le C_1(n)(A_p+\varepsilon)\,r^{n-2}$
from~\eqref{eq:SJ-growth},
\[
  \int_{B_r}Q_m\,dV_g
  \le S_r + 2\,I_r^{1/2}J_r^{1/2}
  \le \bigl[C_1(n)(A_p+\varepsilon) + C'(n,m)(A_p+\varepsilon+1)\bigr]r^{n-2}.
\]
Both terms are absorbed by $(A_p+\varepsilon+1)$, giving
\begin{equation}\label{eq:finite-Q-bound}
  \int_{B_r}Q_m\,dV_g
  \le C(n,m)(A_p+\varepsilon+1)\,r^{n-2},
\end{equation}
where $C(n,m)$ is updated to $\max\{C_1(n), C'(n,m)\} + C'(n,m)$,
depending only on $n$ and $m$.

The algebraic identity
\begin{equation}\label{eq:finite-algebraic-identity}
  \mathrm{Sc}_{\alpha,\beta}
  =(1-\alpha)\,\mathrm{Sc}_g
   +\alpha\,Q_m
   +\!\left(\frac{\alpha}{m}-\beta\right)|\nabla f|^2
\end{equation}
follows by substituting $\alpha\,\Delta f=\alpha\,Q_m-\alpha\,\mathrm{Sc}_g
+(\alpha/m)|\nabla f|^2$ into the definition of $\mathrm{Sc}_{\alpha,\beta}$.
Since $\mathrm{Sc}_g\ge 0$ and $Q_m\ge 0$,
\[
  |\mathrm{Sc}_{\alpha,\beta}|
  \le|1-\alpha|\,\mathrm{Sc}_g
    +|\alpha|\,Q_m
    +\left|\frac{\alpha}{m}-\beta\right||\nabla f|^2.
\]
Integrating the pointwise bound
\[
  |\mathrm{Sc}_{\alpha,\beta}|
  \le |1-\alpha|\,\mathrm{Sc}_g
    + |\alpha|\,Q_m
    + \left|\frac{\alpha}{m}-\beta\right||\nabla f|^2
\]
over $B_r$ and splitting into three terms,
\[
  \int_{B_r}|\mathrm{Sc}_{\alpha,\beta}|\,dV_g
  \le |1-\alpha|\int_{B_r}\mathrm{Sc}_g\,dV_g
    + |\alpha|\int_{B_r}Q_m\,dV_g
    + \left|\frac{\alpha}{m}-\beta\right|\int_{B_r}|\nabla f|^2\,dV_g.
\]
We bound each term separately. For the first, \eqref{eq:scalar-growth-eps} gives
\[
  |1-\alpha|\int_{B_r}\mathrm{Sc}_g\,dV_g
  \le |1-\alpha|(A_p+\varepsilon)\,r^{n-2}.
\]
For the second, \eqref{eq:finite-Q-bound} gives
\[
  |\alpha|\int_{B_r}Q_m\,dV_g
  \le |\alpha|\,C(n,m)(A_p+\varepsilon+1)\,r^{n-2}.
\]
For the third, \eqref{eq:finite-grad-bound} gives
\[
  \left|\frac{\alpha}{m}-\beta\right|\int_{B_r}|\nabla f|^2\,dV_g
  \le \left|\frac{\alpha}{m}-\beta\right|C(n,m)(A_p+\varepsilon+1)\,r^{n-2}.
\]
Since $(A_p+\varepsilon) \le (A_p+\varepsilon+1)$, all three terms are
bounded by a constant multiple of $(A_p+\varepsilon+1)\,r^{n-2}$.
Summing and setting
\[
  C(n,m,\alpha,\beta)
  := |1-\alpha| + |\alpha|\,C(n,m)
     + \left|\frac{\alpha}{m}-\beta\right|C(n,m),
\]
which depends only on $n$, $m$, $\alpha$, and $\beta$, yields
\begin{equation*}
  \int_{B_r}|\mathrm{Sc}_{\alpha,\beta}|\,dV_g
  \le C(n,m,\alpha,\beta)(A_p+\varepsilon+1)\,r^{n-2}.\qedhere
\end{equation*}
\end{proof}

For the limiting Bakry--\'Emery tensor, set
$Q_\infty:=\mathrm{Sc}_g+\Delta f;$ tracing $\mathrm{Ric}_{f,\infty}\ge 0$ gives $Q_\infty\ge 0$.

\begin{theorem}[Infinite-dimensional Bakry--\'Emery tensor]
\label{thm:infinite-N}
Let $(M^n,g,e^{-f}\,dV_g)$ ($n\geq 3$)  be a weighted Riemannian manifold
satisfying $\operatorname{Ric}_{f,\infty}\ge 0$, and suppose $g$
satisfies~\eqref{eq:base-assumptions}.
\begin{enumerate}
\item[\textup{(a)}] If $\alpha\le 0$ and $\beta\ge 0$, then for every
$\varepsilon>0$ there exists $r_0=r_0(\varepsilon)$ such that for all
$r\ge r_0$,
\begin{equation}\label{eq:infty-signed-upper}
  \int_{B_g(p,r)}\mathrm{Sc}_{\alpha,\beta}\,dV_g
  \le(1-\alpha)\int_{B_g(p,r)}\mathrm{Sc}_g\,dV_g
  \le(1-\alpha)(A_p+\varepsilon)\,r^{n-2}.
\end{equation}

\item[\textup{(b)}] If $\beta>0$ and $\mathrm{Sc}_{\alpha,\beta}\ge 0$
pointwise, with $\alpha\in\mathbb{R}$ arbitrary, then for every $\varepsilon>0$
there exists $r_0=r_0(\varepsilon)$ such that for all $r\ge r_0$,
\begin{equation}\label{eq:infty-positive-beta}
  \int_{B_g(p,r)}\mathrm{Sc}_{\alpha,\beta}\,dV_g
  \le C(n,\alpha,\beta)(A_p+\varepsilon+1)\,r^{n-2}.
\end{equation}
\end{enumerate}
\end{theorem}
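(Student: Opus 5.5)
Part (a) is purely pointwise. Since $\mathrm{Ric}_{f,\infty}\ge 0$, its trace gives $Q_\infty=\mathrm{Sc}_g+\Delta f\ge 0$, i.e.\ $\Delta f\ge -\mathrm{Sc}_g$. For $\alpha\le 0$ this yields $\alpha\Delta f\le -\alpha\,\mathrm{Sc}_g$, and $\beta\ge0$ gives $-\beta|\nabla f|^2\le 0$. Hence
\[
\mathrm{Sc}_{\alpha,\beta}\le (1-\alpha)\,\mathrm{Sc}_g
\]
pointwise on $M$. Integrating over $B_g(p,r)$ gives the first inequality in \eqref{eq:infty-signed-upper}. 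Since $1-\alpha\ge 1>0$, the second inequality follows from \eqref{eq:scalar-growth-eps} for $r\ge r_\varepsilon$. No cutoff is needed here.

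For part (b) I would reuse the Caccioppoli device from the proof of Theorem~\ref{thm:finite-m}, with the pointwise sign of $\mathrm{Sc}_{\alpha,\beta}$ replacing the Bakry--\'Emery coercivity. Take the same cutoffs $\phi_r$ and the quantities $I_r,S_r,J_r$, together with the bounds \eqref{eq:SJ-growth}. Multiplying $0\le \mathrm{Sc}_{\alpha,\beta}=\mathrm{Sc}_g+\alpha\Delta f-\beta|\nabla f|^2$ by $\phi_r^2$, integrating, and applying \eqref{eq:weak-ibp-cutoff} gives
\[
\beta I_r\le S_r-2\alpha\int_M\phi_r\langle\nabla\phi_r,\nabla f\rangle\,dV_g\le S_r+2|\alpha|I_r^{1/2}J_r^{1/2}.
\]
Young's inequality $2|\alpha|I_r^{1/2}J_r^{1/2}\le \tfrac{\beta}{2}I_r+\tfrac{2\alpha^2}{\beta}J_r$ then absorbs the gradient term, and uses $\beta>0$ essentially. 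This yields $I_r\le \tfrac{2}{\beta}S_r+\tfrac{4\alpha^2}{\beta^2}J_r\le C(n,\alpha,\beta)(A_p+\varepsilon+1)r^{n-2}$.

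Next I would bound the target integral. Since $\mathrm{Sc}_{\alpha,\beta}\ge0$ and $\phi_r\equiv1$ on $B_r$,
\[
\int_{B_r}\mathrm{Sc}_{\alpha,\beta}\le\int_M\phi_r^2\mathrm{Sc}_{\alpha,\beta}=S_r-2\alpha\int_M\phi_r\langle\nabla\phi_r,\nabla f\rangle-\beta I_r\le S_r+2|\alpha|I_r^{1/2}J_r^{1/2}.
\]
Inserting the bound on $I_r$ and $J_r\le C_2(n)r^{n-2}$, and using $(A_p+\varepsilon+1)^{1/2}\le A_p+\varepsilon+1$, gives \eqref{eq:infty-positive-beta}.

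The only delicate point is the absorption step, which requires $\beta>0$ strictly so that $I_r$ can be moved to the left side. The $\Delta f$ term must always be handled through integration by parts against $\phi_r^2$, never pointwise, because no sign on $\Delta f$ is available in (b). The Bakry--\'Emery hypothesis is not used in (b) at all; it is listed only for context.
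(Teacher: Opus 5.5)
Your proposal is correct and follows essentially the same route as the paper. Part (a) uses the pointwise bound $\mathrm{Sc}_{\alpha,\beta}\le(1-\alpha)\,\mathrm{Sc}_g$ coming from $Q_\infty\ge 0$, and part (b) uses the cutoff Caccioppoli argument with Young absorption (requiring $\beta>0$) followed by the bound $\int_{B_r}\mathrm{Sc}_{\alpha,\beta}\le S_r+2|\alpha|I_r^{1/2}J_r^{1/2}$; your observation that $\mathrm{Ric}_{f,\infty}\ge 0$ plays no role in (b) matches the paper's own remark.
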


The sharpness of Theorem~\ref{thm:infinite-N} is established in
Remark~\ref{rem:no-absolute-infty} and Corollary~\ref{cor:sharp-range-infty}.
The two parts require different arguments. Part~\textup{(a)} is immediate
from the pointwise bound $\mathrm{Sc}_{\alpha,\beta}\le(1-\alpha)\,\mathrm{Sc}_g$
and~\eqref{eq:scalar-growth-eps}. In part~\textup{(b)}, the hypothesis
$\mathrm{Sc}_{\alpha,\beta}\ge 0$ replaces the coercivity of
$\operatorname{Ric}_{f,m}\ge 0$: testing against $\phi_r^2$ and applying
Cauchy--Schwarz and Young gives a Caccioppoli bound on $I_r$, from
which~\eqref{eq:infty-positive-beta} follows.

\begin{proof}
The cutoff $\phi_r$ and the quantities $I_r$, $S_r$, $J_r$ are as in
the proof of Theorem~\ref{thm:finite-m}, and the bounds~\eqref{eq:SJ-growth}
and~\eqref{eq:weak-ibp-cutoff} hold unchanged.

\textit{Part~\textup{(a)}.}
The algebraic identity
\[
  \mathrm{Sc}_{\alpha,\beta}
  =(1-\alpha)\,\mathrm{Sc}_g+\alpha\,Q_\infty-\beta|\nabla f|^2
\]
and the conditions $Q_\infty\ge 0$, $\alpha\le 0$, $\beta\ge 0$ together
give $\mathrm{Sc}_{\alpha,\beta}\le(1-\alpha)\,\mathrm{Sc}_g$ pointwise.
Integrating and applying~\eqref{eq:scalar-growth-eps}
proves~\eqref{eq:infty-signed-upper}. 

\textit{Part~\textup{(b)}.}
Since $\mathrm{Sc}_{\alpha,\beta}\ge 0$ and $\phi_r\equiv 1$ on $B_r$,
\[
  0
  \le\int_M\phi_r^2\,\mathrm{Sc}_{\alpha,\beta}\,dV_g
  =S_r
   -2\alpha\int_M\phi_r\langle\nabla\phi_r,\nabla f\rangle\,dV_g
   -\beta\,I_r.
\]

Rearranging and applying Cauchy--Schwarz to the cross term,
\[
  \beta\,I_r
  \le S_r + 2|\alpha|\,I_r^{1/2}J_r^{1/2}.
\]
Since $\beta>0$, one has 
$2|\alpha|\,I_r^{1/2}J_r^{1/2} \le \tfrac{\beta}{2}\,I_r
+ \tfrac{2\alpha^2}{\beta}\,J_r$, and hence
\[
  \beta\,I_r \le S_r + \frac{\beta}{2}\,I_r + \frac{2\alpha^2}{\beta}\,J_r.
\]
 Substituting
$S_r \le C_1(n)(A_p+\varepsilon)\,r^{n-2}$ and
$J_r \le C_2(n)\,r^{n-2}$ from~\eqref{eq:SJ-growth} gives
\[
  I_r
  \le \frac{2C_1(n)}{\beta}(A_p+\varepsilon)\,r^{n-2}
    + \frac{4\alpha^2 C_2(n)}{\beta^2}\,r^{n-2}.
\]
Since both terms are absorbed by $(A_p+\varepsilon+1)$, setting
$C(n,\alpha,\beta) :=\tfrac{2C_1(n)}{\beta} +
\tfrac{4\alpha^2 C_2(n)}{\beta^2}$ gives
\begin{equation}\label{eq:infty-grad-coercive}
  I_r
  \le \frac{2}{\beta}\,S_r + \frac{4\alpha^2}{\beta^2}\,J_r
  \le C(n,\alpha,\beta)(A_p+\varepsilon+1)\,r^{n-2}.
\end{equation}

Since $\phi_r\equiv 1$ on $B_r$ and $\mathrm{Sc}_{\alpha,\beta}\ge 0$,
extending the domain of integration and expanding as before,
\[
  \int_{B_r}\mathrm{Sc}_{\alpha,\beta}\,dV_g
  \le\int_M\phi_r^2\,\mathrm{Sc}_{\alpha,\beta}\,dV_g
  = S_r
    - 2\alpha\int_M\phi_r\langle\nabla\phi_r,\nabla f\rangle\,dV_g
    - \beta\,I_r
  \le S_r + 2|\alpha|\,I_r^{1/2}J_r^{1/2},
\]
where $-\beta\,I_r\le 0$ is discarded and Cauchy--Schwarz is applied to
the cross term. For the remaining term, \eqref{eq:infty-grad-coercive}
and $J_r\le C_2(n)\,r^{n-2}$ from~\eqref{eq:SJ-growth} give
\[
  I_r^{1/2}J_r^{1/2}
  \le \bigl[C(n,\alpha,\beta)(A_p+\varepsilon+1)\,r^{n-2}\bigr]^{1/2}
     \cdot\bigl[C_2(n)\,r^{n-2}\bigr]^{1/2}
  \le C'(n,\alpha,\beta)(A_p+\varepsilon+1)\,r^{n-2},
\]
where the last step uses $(A_p+\varepsilon+1)^{1/2}\le(A_p+\varepsilon+1)$.
Combining with $S_r\le C_1(n)(A_p+\varepsilon)\,r^{n-2}$
from~\eqref{eq:SJ-growth} and absorbing both terms
into $(A_p+\varepsilon+1)$ yields~\eqref{eq:infty-positive-beta}.
\end{proof}

\begin{remark}\label{rem:redundant}
The proof of part~\textup{(b)} does not use $\mathrm{Ric}_{f,\infty}\ge 0$;
it relies only on~\eqref{eq:base-assumptions}, the cutoff estimates, and the
pointwise condition $\mathrm{Sc}_{\alpha,\beta}\ge 0$.
The Bakry--\'Emery hypothesis is retained in the statement because it is
natural in the weighted-geometric setting and is used in part~\textup{(a)}.
\end{remark}

\begin{remark}\label{rem:no-absolute-infty}
The restrictions in Theorem~\ref{thm:infinite-N} are sharp. On
$(\mathbb{R}^n,g_{E})$ with $p=0$, the standing assumptions
hold with $A_p=0$, and the following examples show that no further estimates
are possible.

For  absolute-value estimates, if $\beta\ne 0$, take $f(x)=x_1$, giving
$\mathrm{Ric}_{f,\infty}=0$ and $\mathrm{Sc}_{\alpha,\beta}=-\beta$, so
$r^{2-n}\int_{B_r}|\mathrm{Sc}_{\alpha,\beta}|\,dx=|\beta|\omega_n r^2\to\infty$;
if $\beta=0$ and $\alpha\ne 0$, take $f(x)=\tfrac{1}{2}x_1^2$, giving
$\mathrm{Ric}_{f,\infty}\ge 0$ and $\mathrm{Sc}_{\alpha,0}=\alpha$, with
the same divergence. Thus~\eqref{eq:base-assumptions} and
$\mathrm{Ric}_{f,\infty}\ge 0$ alone yield no absolute-value estimate for
any $(\alpha,\beta)\ne(0,0)$.

For the nonnegative estimate: if $\beta<0$, the same $f(x)=x_1$ gives
$\mathrm{Sc}_{\alpha,\beta}=-\beta>0$ and
$r^{2-n}\int_{B_r}\mathrm{Sc}_{\alpha,\beta}\,dx=(-\beta)\omega_n r^2\to\infty$;
if $\beta=0$ and $\alpha>0$, the same $f(x)=\tfrac{1}{2}x_1^2$ gives
$\mathrm{Sc}_{\alpha,0}=\alpha>0$ with the same divergence. Thus the parameter range covered by Theorem~\ref{thm:infinite-N}\textup{(a)}--\textup{(b)} is optimal.
\end{remark}

\begin{corollary}[Optimal parameter range]\label{cor:sharp-range-infty}
Assume~\eqref{eq:base-assumptions}, $\mathrm{Ric}_{f,\infty}\ge 0$, and
$\mathrm{Sc}_{\alpha,\beta}\ge 0$. The range
\[
  (\alpha,\beta)\in
  \bigl(\mathbb{R}\times(0,\infty)\bigr)
  \cup
  \bigl((-\infty,0]\times\{0\}\bigr)
\]
is optimal for the estimate $\int_{B_g(p,r)}\mathrm{Sc}_{\alpha,\beta}\,dV_g=O(r^{n-2})$:
outside this range, Remark~\ref{rem:no-absolute-infty} supplies examples
satisfying all stated hypotheses for which
\[
  \limsup_{r\to\infty}r^{2-n}
  \int_{B_g(p,r)}\mathrm{Sc}_{\alpha,\beta}\,dV_g=+\infty.
\]
\end{corollary}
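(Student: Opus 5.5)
The corollary has two halves, and I would treat them separately. The first half is that the stated parameter range gives the estimate. The second half is that every parameter pair outside it admits a counterexample.

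For the positive half I would split the range into its two pieces. On $\mathbb{R}\times(0,\infty)$ the hypothesis $\mathrm{Sc}_{\alpha,\beta}\ge0$ is exactly the setting of Theorem~\ref{thm:infinite-N}(b), which gives $\int_{B_r}\mathrm{Sc}_{\alpha,\beta}\le C(n,\alpha,\beta)(A_p+\varepsilon+1)r^{n-2}$. On $(-\infty,0]\times\{0\}$ we have $\alpha\le0$ and $\beta=0\ge0$, so Theorem~\ref{thm:infinite-N}(a) applies and gives the bound $(1-\alpha)(A_p+\varepsilon)r^{n-2}$. In both cases the conclusion is $O(r^{n-2})$. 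The sign hypothesis is not even needed in the second piece.

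For optimality I would observe that the complement of the range in $\mathbb{R}^2$ is $\{\beta<0\}\cup\{\beta=0,\ \alpha>0\}$. I would then use the examples of Remark~\ref{rem:no-absolute-infty} on $(\mathbb{R}^n,g_E)$ with $p=0$. There $\mathrm{Ric}=0$ and $\mathrm{Sc}=0$, so \eqref{eq:base-assumptions} holds with $A_p=0$.

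\begin{itemize}
\item If $\beta<0$, take $f=x_1$. Then $\nabla^2 f=0$, so $\mathrm{Ric}_{f,\infty}=0$. Also $\Delta f=0$ and $|\nabla f|=1$, giving $\mathrm{Sc}_{\alpha,\beta}=-\beta>0$, so the sign hypothesis holds.
\item If $\beta=0$ and $\alpha>0$, take $f=\tfrac12x_1^2$. Then $\nabla^2 f=dx_1^2\ge0$ and $\Delta f=1$, giving $\mathrm{Sc}_{\alpha,0}=\alpha>0$.
\end{itemize}

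In both cases $\mathrm{Sc}_{\alpha,\beta}$ is a positive constant $c$, so $r^{2-n}\int_{B_r}\mathrm{Sc}_{\alpha,\beta}=c\,\omega_n r^2\to\infty$.

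The only delicate point is bookkeeping. I need to check that every excluded pair satisfies all the stated hypotheses simultaneously, including the sign condition $\mathrm{Sc}_{\alpha,\beta}\ge0$. I also need to check that the boundary line $\beta=0$ is split correctly: $\alpha\le0$ is covered by part~(a), and $\alpha>0$ is a counterexample.
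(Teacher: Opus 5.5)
Your proof is correct and follows the paper's argument: the range splits into $\beta>0$ (Theorem~\ref{thm:infinite-N}(b)) and $\beta=0,\ \alpha\le 0$ (Theorem~\ref{thm:infinite-N}(a)), and the complement is handled by the two examples $f=x_1$ and $f=\tfrac12 x_1^2$ from Remark~\ref{rem:no-absolute-infty}. One small correction: your aside that the sign hypothesis is not needed on $(-\infty,0]\times\{0\}$ is true only for the one-sided upper bound, so the paper uses $\mathrm{Sc}_{\alpha,0}\ge 0$ to get a genuine $O(r^{n-2})$ bound; without it, $f=\tfrac12|x|^2$ on $\mathbb{R}^n$ with $\alpha<0$ satisfies $\mathrm{Ric}_{f,\infty}\ge 0$ but gives $\int_{B_r}\mathrm{Sc}_{\alpha,0}\,dx=\alpha n\,\omega_n r^n$.
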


\begin{proof}
For $\beta>0$ the bound is Theorem~\ref{thm:infinite-N}\textup{(b)}.
For $\beta=0$ and $\alpha\le 0$, it follows from
Theorem~\ref{thm:infinite-N}\textup{(a)}, since $\mathrm{Sc}_{\alpha,0}\ge 0$
converts the one-sided estimate into a nonnegative bound.
The counterexamples for $\beta<0$ and $\beta=0$, $\alpha>0$ are supplied
by Remark~\ref{rem:no-absolute-infty}.
\end{proof}

\begin{proposition}[Necessity of the sign condition when $\beta>0$]
\label{prop:sign-needed-beta-positive}
Let $n>2$, $\alpha>0$, and $\beta>0$. The pointwise hypothesis
$\mathrm{Sc}_{\alpha,\beta}\ge 0$ in Theorem~\ref{thm:infinite-N}\textup{(b)}
cannot be omitted: there exists a smooth convex function $f$ on
$(\mathbb{R}^n,g_{E})$ with $\mathrm{Ric}_{f,\infty}\ge 0$
for which $\mathrm{Sc}_{\alpha,\beta}$ changes sign and
\[
  \limsup_{r\to\infty}r^{2-n}
  \int_{B_r(0)}\bigl(\alpha\,\Delta f-\beta|\nabla f|^2\bigr)\,dx=+\infty.
\]
\end{proposition}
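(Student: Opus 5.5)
The plan is to give an explicit convex example on flat $\mathbb R^n$. Since $g=g_E$, we have $\mathrm{Sc}_g=0$, so the standing assumptions \eqref{eq:base-assumptions} hold with $A_p=0$, and $\mathrm{Ric}_{f,\infty}=\nabla^2 f$. Hence any smooth convex $f$ satisfies $\mathrm{Ric}_{f,\infty}\ge 0$, and $\mathrm{Sc}_{\alpha,\beta}=\alpha\Delta f-\beta|\nabla f|^2$. The examples in Remark~\ref{rem:no-absolute-infty} cannot be reused. The linear function $x_1$ gives $\mathrm{Sc}_{\alpha,\beta}=-\beta<0$ everywhere, and $\tfrac12x_1^2$ gives $\alpha-\beta x_1^2$. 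The latter does change sign, but its integral over $B_r$ is of order $\alpha r^n-\beta r^{n+2}\to-\infty$, so the $\limsup$ is not $+\infty$. What we need is a convex function whose Laplacian dominates the squared gradient on a large part of the space.

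The natural candidate is a one-variable convex function with large second derivative and small first derivative, concentrated in thin slabs. A cleaner choice is a bounded-gradient function with growing curvature, for instance $f(x)=h(x_1)$ where $h''$ is large and $h'$ is small. Take
\[
  h(t)=\epsilon\,\bigl(\sqrt{1+t^2}\bigr)
\]
on a scaled version. Then $h'$ is bounded by $\epsilon$, but $h''$ decays, so $\int h''$ over a slab is bounded. Convexity forces $h'$ to be monotone, so $\int h''\,dt$ over a line is at most the total variation of $h'$. To make $\int_{B_r}\alpha\Delta f$ large while keeping $\int_{B_r}\beta|\nabla f|^2$ small, we need $h'$ to range over a large interval while $h''$ is concentrated where $|h'|$ is small. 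This is impossible in one variable with a bounded range of $h'$.

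So I would use many variables' worth of oscillation instead. The better idea is a convex function whose gradient stays bounded while its Hessian has a large trace in aggregate, for example $f(x)=\epsilon\sum_i \log\cosh(x_i)$ or, more effectively, $f(x)=\epsilon\,\sqrt{1+|x|^2}$. For the latter, $|\nabla f|\le\epsilon$ and $\Delta f\approx\epsilon(n-1)/|x|$. This gives
\[
  \int_{B_r}\alpha\Delta f\sim\alpha\epsilon c_n r^{n-1},\qquad \int_{B_r}\beta|\nabla f|^2\sim\beta\epsilon^2\omega_n r^n,
\]
so the squared-gradient term still wins. The fix is to let the scale vary. Take $f(x)=\sum_k \epsilon_k\sqrt{\lambda_k^{2}+|x|^2}$, or more simply design a radial $f=F(|x|)$ with $F'(s)=\epsilon(s)\to0$ slowly, where $F'$ is positive and increasing. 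Then
\[
  \Delta f=F''+\frac{(n-1)F'}{s},\qquad |\nabla f|^2=(F')^2,
\]
and the integrand is positive once $\alpha(n-1)F'/s>\beta(F')^2$, that is, once $F'(s)<\alpha(n-1)/(\beta s)$.

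Here is the obstacle. Convexity forces $F'$ to be nondecreasing, so $F'$ cannot decay like $1/s$. For large $s$ the integrand becomes negative, while it is positive near the origin; this gives the required sign change. With $F'$ increasing, however, $\int_{B_r}|\nabla f|^2$ dominates, so such radial examples fail to produce $+\infty$. I therefore expect the main step to be finding the correct construction, and the author's example is presumably a sum of translated convex bumps (for instance functions $\epsilon_k\,\psi((x_1-k)/\delta_k)$ with steep, localized Hessians) arranged so that $\int\Delta f$ grows faster than $r^{n-2}$ while $|\nabla f|$ stays small. First I would test $f(x)=\tfrac12x_1^2$ combined with a rescaling adapted to $r$. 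Failing that, I would allow $f$ to depend on a sequence of scales $r_j$ and verify the $\limsup$ along $r_j$ only.
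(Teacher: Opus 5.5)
Your proposal has a genuine gap. It never reaches a construction, and the one route you explicitly rule out is the one that works. You dismiss radial $f=F(|x|)$ with $v:=F'$ nondecreasing because $\int_{B_r}|\nabla f|^2$ should dominate. But the divergence theorem gives
\[
R^{2-n}\int_{B_R(0)}\bigl(\alpha\Delta f-\beta|\nabla f|^2\bigr)\,dx
=\omega_{n-1}\Bigl[\alpha R\,v(R)-\beta R^{2-n}\int_0^R v(s)^2s^{n-1}\,ds\Bigr],
\qquad \omega_{n-1}=|\mathbb{S}^{n-1}|.
\]
So the positive term is a boundary flux that sees only the current value $v(R)$, while the negative term is an accumulated cost. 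Monotonicity does not stop $v$ from jumping.

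Let $v$ be a smoothed staircase with the following properties:
\begin{itemize}
\item it stays $\le a_{j-1}$ up to $R_j-\delta_j$ and climbs to $a_j$ inside the thin shell $[R_j-\delta_j,R_j]$;
\item the radii are spaced out, say $R_j=3j$ and $\delta_j<\tfrac12$;
\item the shells are thin relative to the new height, $\delta_j<\alpha/(2\beta a_j)$;
\item the heights grow fast enough that $\tfrac{\alpha}{2}a_jR_j\ge\tfrac{\beta}{n}a_{j-1}^2R_j^2+j$.
\end{itemize}
Split the cost integral at $R_j-\delta_j$. At $R=R_j$ the right-hand side is then at least
\[
\omega_{n-1}\bigl[\alpha a_jR_j-\tfrac{\beta}{n}a_{j-1}^2R_j^2-\beta a_j^2\delta_jR_j\bigr]\ge\omega_{n-1}\,j,
\]
so the $\limsup$ is $+\infty$ along $R_j$. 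Your heuristic is right for slowly varying $F'$, but it says nothing about well-chosen radii. This is exactly the paper's construction.

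The sign change comes for free. At $|x|=R_j$ one has $v'(R_j)=0$, so $\mathrm{Sc}_{\alpha,\beta}=\alpha(n-1)a_j/R_j-\beta a_j^2<0$ once $a_j>\alpha(n-1)/(\beta R_j)$. Meanwhile, positivity of the integral forces $\mathrm{Sc}_{\alpha,\beta}>0$ somewhere in $B_{R_j}(0)$.

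Your fallback, a sum of translated convex bumps in the single variable $x_1$, cannot work when $n\ge 4$, however the scales are chosen. Take $f=h(x_1)$ and let $c_{n-1}$ be the volume of the unit ball in $\mathbb{R}^{n-1}$. Slice $B_r$ by $x_1=t$ and integrate by parts; the slice volume $c_{n-1}(r^2-t^2)^{(n-1)/2}$ vanishes at $t=\pm r$, so there are no boundary terms. This gives
\[
\int_{B_r}\bigl(\alpha\Delta f-\beta|\nabla f|^2\bigr)dx
=c_{n-1}\int_{-r}^{r}\Bigl[\alpha(n-1)\,t\,(r^2-t^2)^{\frac{n-3}{2}}h'(t)-\beta(r^2-t^2)^{\frac{n-1}{2}}h'(t)^2\Bigr]dt .
\]
Maximizing the integrand pointwise in $h'(t)$ bounds this by
\[
\frac{\alpha^2(n-1)^2c_{n-1}}{4\beta}\int_{-r}^r t^2(r^2-t^2)^{\frac{n-5}{2}}dt=C(n,\alpha,\beta)\,r^{n-2},
\]
which is finite for $n\ge4$. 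The same bound applies to separable sums $\sum_i h_i(x_i)$ such as your $\log\cosh$ example.

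In slab geometry the flux weight degenerates at $\partial B_r$; the full boundary flux $\omega_{n-1}R^{n-1}v(R)$ is available only in the radial setting. Your closing idea of testing the $\limsup$ along a sequence $r_j$ is the right instinct. It has to be combined with radial symmetry and with the jumps placed in thin shells just inside the $r_j$.
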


\begin{proof}
We work on \((\mathbb R^n,g_{E})\) with base point \(0\). Then
\(\mathrm{Ric}_g=0\), \(\mathrm{Sc}_g=0\), and the standing assumptions hold
with \(A_p=0\). Thus
\[
  \mathrm{Sc}_{\alpha,\beta}
  =
  \alpha\Delta f-\beta|\nabla f|^2 .
\]

We first record the radial identities. Suppose that
\(v\in C^\infty([0,\infty))\), \(v\ge0\), \(v'\ge0\), and \(v\equiv0\) near
\(0\). Define
\[
  F(r):=\int_0^r v(s)\,ds,
  \qquad
  f(x):=F(|x|).
\]
Then \(f\) is smooth and convex on \(\mathbb R^n\). Indeed, for \(r=|x|>0\),
\[
  \nabla^2 f
  =
  v'(r)\,dr\otimes dr
  +
  \frac{v(r)}{r}
  \bigl(g_{E}-dr\otimes dr\bigr)\ge0,
\]
and \(f\) is smooth at the origin because \(v\equiv0\) near \(0\). Hence
\[
  \mathrm{Ric}_{f,\infty}
  =
  \nabla^2 f
  \ge0.
\]
Moreover,
\[
  \Delta f=v'(r)+\frac{n-1}{r}v(r),
  \qquad
  |\nabla f|^2=v(r)^2 .
\]
Writing \(\omega_{n-1}=|\mathbb S^{n-1}|\), the divergence theorem gives
\begin{equation}\label{eq:Phi-short-final}
  \Phi(R):=
  R^{2-n}
  \int_{B_R(0)}
  \bigl(\alpha\Delta f-\beta|\nabla f|^2\bigr)\,dx
  =
  \omega_{n-1}
  \left[
    \alpha Rv(R)
    -
    \beta R^{2-n}\int_0^R v(s)^2s^{n-1}\,ds
  \right].
\end{equation}
The first term is a boundary flux depending only on the current value
\(v(R)\), while the second term is the accumulated \(L^2\)-cost of the
gradient. We exploit this by keeping \(v\) at the old height \(a_{j-1}\) until
just before \(R_j\), and then raising it to the new height \(a_j\) in a thin
shell near \(R_j\).

Set \(R_j:=3j\) and \(a_0:=0\). Choose \(a_j>a_{j-1}\) inductively so that
\begin{equation}\label{eq:aj-final-short}
  \frac{\alpha}{2}a_jR_j
  \ge
  \frac{\beta}{n}a_{j-1}^2R_j^2+j,
  \qquad
  a_j>\frac{\alpha(n-1)}{\beta R_j}.
\end{equation}
Both conditions are lower bounds on \(a_j\), so any sufficiently large \(a_j\)
satisfies them. The first inequality makes the new boundary flux dominate the
old plateau cost by the margin \(j\); the second is used only to force
negativity of \(\mathrm{Sc}_{\alpha,\beta}\) at \(|x|=R_j\).

Choose
\[
  0<\delta_j<
  \min\left\{\frac12,\frac{\alpha}{2\beta a_j}\right\}.
\]
The bound \(\delta_j<1/2\) keeps the shells
\([R_j-\delta_j,R_j]\) pairwise disjoint, since consecutive radii differ by
\(3\). The bound \(\delta_j<\alpha/(2\beta a_j)\) controls the new-shell cost.

Let \(\theta\in C^\infty(\mathbb R)\) be nondecreasing, with
\(\theta\equiv0\) on \((-\infty,0]\) and \(\theta\equiv1\) on
\([1,\infty)\), and define
\[
  v(r):=
  \sum_{j=1}^{\infty}
  (a_j-a_{j-1})
  \theta\!\left(\frac{r-(R_j-\delta_j)}{\delta_j}\right).
\]
The sum is locally finite. Hence \(v\in C^\infty([0,\infty))\), \(v\ge0\),
\(v'\ge0\), and \(v\equiv0\) near \(0\). Therefore the radial function
\(f(x)=F(|x|)\), with \(F(r)=\int_0^r v(s)\,ds\), is smooth and convex on
\(\mathbb R^n\).

At \(R_j\), the first \(j\) layers have completed and no later layer has
begun; the sum therefore telescopes. Thus
\[
  v(R_j)=\sum_{k=1}^j(a_k-a_{k-1})=a_j,
  \qquad
  v'(R_j)=0,
  \qquad
  v(s)\le a_{j-1}\quad\text{for }0\le s\le R_j-\delta_j.
\]
Splitting the cost integral at \(R_j-\delta_j\), and using \(v\le a_{j-1}\)
before the shell and \(v\le a_j\) on the shell, gives
\[
  \int_0^{R_j}v(s)^2s^{n-1}\,ds
  \le
  a_{j-1}^2\int_0^{R_j}s^{n-1}\,ds
  +
  a_j^2\delta_jR_j^{n-1}
  =
  \frac{a_{j-1}^2R_j^n}{n}
  +
  a_j^2\delta_jR_j^{n-1}.
\]
Substituting this estimate into \eqref{eq:Phi-short-final} gives
\[
\begin{aligned}
  \Phi(R_j)
  &\ge
  \omega_{n-1}
  \left[
    \alpha a_jR_j
    -
    \frac{\beta}{n}a_{j-1}^2R_j^2
    -
    \beta a_j^2\delta_jR_j
  \right]  \\
  &\ge
  \omega_{n-1}
  \left[
    \frac{\alpha}{2}a_jR_j
    -
    \frac{\beta}{n}a_{j-1}^2R_j^2
  \right]
  \ge
  \omega_{n-1}j.
\end{aligned}
\]
Here the second line uses
$\beta a_j^2\delta_jR_j\le \frac{\alpha}{2}a_jR_j,$
and the last inequality uses \eqref{eq:aj-final-short}. Hence
\[
  \limsup_{R\to\infty}
  R^{2-n}
  \int_{B_R(0)}
  \bigl(\alpha\Delta f-\beta|\nabla f|^2\bigr)\,dx
  =
  +\infty .
\]

Finally, since \(\Phi(R_j)>0\), the continuous function
\(\mathrm{Sc}_{\alpha,\beta}\) is positive somewhere in \(B_{R_j}(0)\). On the
other hand, at \(|x|=R_j\), one has \(v(R_j)=a_j\) and \(v'(R_j)=0\), so
\[
  \mathrm{Sc}_{\alpha,\beta}
  =
  \alpha\left(v'(R_j)+\frac{n-1}{R_j}v(R_j)\right)
  -
  \beta v(R_j)^2
  =
  \frac{\alpha(n-1)a_j}{R_j}-\beta a_j^2<0
\]
by \eqref{eq:aj-final-short}. Therefore
\(\mathrm{Sc}_{\alpha,\beta}\) changes sign.
\end{proof}

\begin{proof}[Proof of Theorem~\ref{thm:intro-weighted}]
Parts~\textup{(i)}, \textup{(ii)}, and~\textup{(iii)} follow from
Theorems~\ref{thm:finite-m} and~\ref{thm:infinite-N}\textup{(a)}--\textup{(b)},
respectively.
\end{proof}

\addcontentsline{toc}{section}{\refname}
\bibliographystyle{alpha}
\bibliography{reference}

\end{document}